\newlength{\defbaselineskip}
\newtheorem{thm}{Theorem}[section]
\newtheorem{lem}[thm]{Lemma}
\newtheorem{prop}[thm]{Proposition}
\newtheorem{cor}[thm]{Corollary}
\newtheorem{rmk}[thm]{Remark}
\newtheorem{ex}[thm]{Example}
\newtheorem{defn}[thm]{Definition}
\numberwithin{equation}{section}
\newcommand{\Cour}[1]      {\left[\!\left[#1\right]\!\right]}
\newcommand{\id}           {\mathrm{id}}
\newcommand{\<}{\langle}
\renewcommand{\>}{\rangle}
\newcommand{\Ann} {\mathrm{Ann}}
\newcommand{\Lie}       {\mathcal{L}}
\newcommand{\frakx}     {\mathfrak{X}}
\newcommand{\G}         {\mathcal{G}}
\newcommand{\toto}      {\rightrightarrows}
\newcommand{\ttimes}   {_{\,s}\!\!\times_{t}}
\newcommand{\N}     {\mathcal{N}} %Nijenhuis torsion
\newcommand{\C}     {\mathbb{C}}   
\newcommand{\R}     {\mathbb{R}}
\newcommand{\T}     {\mathbb{T}}
\newcommand{\D}     {\mathbb{D}}
\newcommand{\A}     {\mathcal{A}}
\newcommand{\pr}    {\mathrm{pr}}
\newcommand{\ii}    {\mathbf{i}\,}
\newcommand{\Do}    {\overline{\partial}}
\renewcommand{\O}     {\mathcal{O}}
\newcommand{\tg}     {\scriptscriptstyle{tg}}
\newcommand{\cotg}     {\scriptscriptstyle{cotg}}
\newcommand{\tleft} {\langle\!\langle }
\newcommand{\tright}  {\rangle \! \rangle}
\title{Dirac structures and Nijenhuis operators}
\author{Henrique Bursztyn} 
\address{IMPA, Estrada Dona Castorina 110, Rio de Janeiro, 22460-320, Brazil.}
\email{henrique@impa.br}
\author{Thiago Drummond}
\address{Departamento de Matem\'atica, Instituto de Matem\'atica,
Universidade Federal do Rio de Janeiro,
Caixa Postal 68530, Rio de Janeiro, RJ, 21941-909, Brasil.}
\email{drummond@im.ufrj.br}
\author{Clarice Netto}
\address{IMPA, Estrada Dona Castorina 110, Rio de Janeiro, 22460-320, Brazil.}
\email{claricenetto@id.uff.br}
\date{}
\begin{document}

\begin{abstract}
We introduce a notion of compatibility between (almost) Dirac structures and  $(1,1)$-tensor fields extending that of Poisson-Nijenhuis structures. 
We study several properties of the ``Dirac-Nijenhuis'' structures thus obtained,
including
their connection with holomorphic Dirac structures, the geometry of their leaves and quotients, as well as the presence of hierarchies.  We also consider their integration to Lie groupoids, which includes the integration of holomorphic Dirac structures as a special case.
\end{abstract}

\maketitle

%\tableofcontents

%%%%%%%%%%%%%%%%%%%%%%%%%%%%%%%%%%%%%
\section{Introduction}

There are several situations of interest in geometry that involve the compatibility of a given geometric structure with a suitable $(1,1)$-tensor field. An illustrative example is the study of holomorphic objects on a complex manifold.
In this paper we introduce a notion of compatibility between (almost) Dirac structures and general $(1,1)$-tensor fields  that
extends, and is inspired by, the theory of {\em Poisson-Nijenhuis structures} \cite{KS-Magri90,MM}. In this context the compatibility  of a Poisson structure with a Nijenhuis operator has remarkable consequences, such as the presence of bi-hamiltonian structures \cite{GelfandDorfman,Mag}, Poisson hierarchies and integrals of motion that underpin many integrable systems, see e.g. \cite{BQT18,damfer08,DasOku,MagMars,MM}. In Poisson-Lie theory, Poisson-Nijenhuis structures (see \cite{rubtsov}) play a central role in certain quantization schemes \cite{bonechi15,BCQT14}.
Poisson-Nijenhuis structures also emerge naturally in the characterization of holomorphic Poisson structures in terms of their ``real parts'' \cite[$\S$~2.3]{LMX2}, and
this viewpoint is key to establish the holomorphic version of the integration of Poisson structures by symplectic groupoids \cite{SXC,SX} (see also \cite{crainic}). The notion of {\em Dirac-Nijenhuis structure} introduced in this paper provides a similar viewpoint to holomorphic Dirac structures and leads to more general integration results.

% recall PN, challenge with concomitant, omegaN

Dirac structures \cite{Cour} are common generalizations of Poisson structures and presymplectic forms, originally motivated by constrained mechanics (as the intrinsic geometry of submanifolds of Poisson manifolds), with a wide array of recent applications, see e.g. \cite{HB, Mein} and references therein. 
%(see e.g. \cite{bcwz,Gualt11,MarYo}, as well as the survey \cite{H}  and references therein \textcolor{red}{skip refs?}). 
A Dirac structure on a manifold $M$ is a subbundle $L\subset \T M:= TM \oplus T^*M$ that is lagrangian (for the canonical symmetric pairing on $\T M$) and involutive with respect to the Courant-Dorfman bracket; any Poisson structure $\pi$ on $M$ may be regarded as a Dirac structure $L_\pi$ given by the graph of $\pi^\sharp: T^*M \to TM$, $\alpha\mapsto i_\alpha\pi$.  Dirac structures have also been considered in the holomorphic category, e.g. in connection with generalized K\" ahler geometry \cite{Gual14}  
and Poisson-Lie theory \cite{BIL}.

To better explain our results, recall that, in Poisson-Nijenhuis theory, one finds a notion of compatibility between a bivector field $\pi \in \mathfrak{X}^2(M)$ and a $(1,1)$-tensor field $r:TM \to TM$ that is expressed by two conditions:
\begin{itemize}
    \item[(1)] $\pi^\sharp \circ r^* = r \circ \pi^\sharp$,
    \item[(2)] For $X\in \mathfrak{X}(M)$ and $\alpha \in \Omega^1(M)$,
    $$
   R_\pi^r(X, \alpha) := \pi^\sharp(\Lie_X r^*(\alpha) - \Lie_{r(X)}\alpha) - (\Lie_{\pi^\sharp(\alpha)}r)(X) =0.
   $$
\end{itemize}
The expression in (2) is known as the {\em concomitant} of $\pi$ and $r$. The starting point of this paper is a new interpretation of these conditions that
clarifies how to extend them to Dirac structures.

By considering the map $(r,r^*): \T M \to \T M$, the purely algebraic condition (1) can be rephrased in terms of $L_\pi$ as $(r,r^*)(L_\pi)\subset L_\pi$.
Finding a parallel reformulation of the vanishing of the concomitant is less evident. For that, a central role is played by
two ``connection-like'' operators, $D^r: \mathfrak{X}(M) \to \Omega^1(M,TM)$ and $D^{r,*}: \Omega^1(M) \to \Omega^1(M,T^*M)$, given by
\begin{align*}
& D^r_X(Y) =  i_X(D^r(Y))  : = (\Lie_{Y}r)(X),\\
& D^{r,*}_X(\alpha) =  i_X(D^{r,*}(\alpha))  : =  \Lie_{X} r^*(\alpha) - \Lie_{r(X)}\alpha
\end{align*}
for $X,Y \in \mathfrak{X}(M)$, $\alpha\in \Omega^1(M)$. Setting $\D^r:= (D^r,D^{r,*})$, condition (2) is equivalent to $\D^r_X(\Gamma(L_\pi))\subseteq \Gamma(L_\pi)$ for all $X\in \mathfrak{X}(M)$ \cite[Sec.~2]{T}. This leads to the following definition: a  lagrangian subbundle $L\subset \T M$ is said to be {\em compatible} with a $(1,1)$-tensor field $r$ if
\begin{equation}\label{eq:DNcond}
(r,r^*)(L)\subset L, \qquad \D^r_X(\Gamma(L))\subseteq \Gamma(L),
\end{equation}
for all $X \in \mathfrak{X}(M)$.
When $L$ is a Dirac structure and $r$ is a Nijenhuis operator, the pair $(L,r)$ is called a {\em Dirac-Nijenhuis} structure. Besides recovering Poisson-Nijenhuis structures when $L$ is the graph of a Poisson structure, Dirac-Nijenhuis structures agree with the so-called {\em $\Omega N$-structures}\footnote{In this paper, we will refer to $\Omega N$-structures as  {\em presymplectic-Nijenhuis} structures.}  of \cite{MM}
when $L$ is the graph of a closed 2-form (see $\S$ \ref{sec:comp_Dirac}).
We stress that the notion of compatibility in \eqref{eq:DNcond} does not rely on the Courant-integrability of $L$ (or even the fact that it is lagrangian)  nor the Nijenhuis-integrability of  $r$.
This degree of generality makes it possible to accommodate, with no extra effort, weaker forms of integrability of $L$ and $r$, such as the ``quasi-Nijenhuis'' structures of \cite{SX} (see \cite{FMOP} for recent applications), see Remark \ref{rem:quasi}. Other approaches to Dirac-Nijenhuis structures can be found in the literature, see \cite{CG,Nunes04,LB,LB2}; we discuss how they compare with ours in Appendix~\ref{app:B}. As we will discuss in  \cite{BDC},
our Dirac-Nijenhuis structures fit into a more general theory of ``Courant-Nijenhuis algebroids''.

 We pass to the description of the main results and structure of the paper. We start by explaining in $\S$ \ref{sec:more} the general context in which the operators $D^r$ and $D^{r,*}$ naturally arise, namely that of {\em generalized derivations of degree 1} \cite[Sec.~2]{T}, or
{\em 1-derivations} for simplicity. Just as usual derivations of a vector bundle are in one-to-one correspondence with its linear vector fields  (see e.g. \cite[Thm.~3.4.5]{Mac-book}), 1-derivations on a vector bundle $E\to M$ are derivation-like objects that correspond to {\em linear} $(1,1)$-tensor fields on the total space $E$, see Theorem~\ref{thm:bijection}. For a $(1,1)$-tensor field $r$ on $M$, the triples $(D^r, r, r)$ and $(D^{r,*}, r^*, r)$ are examples of (dual) 1-derivations on $TM$ and $T^*M$, respectively; their corresponding  linear $(1,1)$-tensor fields are the so-called {\em tangent} and {\em cotangent lifts}  \cite{YI} of $r$, denoted by
$$
r^{tg}: T(TM)\to T(TM), \qquad r^{cotg}: T(T^*M)\to T(T^*M).
$$
Similarly, $(\D^r, (r,r^*), r)$  is a 1-derivation on $\T M$ associated with $(r^{tg}, r^{cotg}):T(\T M) \to T(\T M)$. With this correspondence at hand, the two conditions \eqref{eq:DNcond} in the definition of a Dirac-Nijenhuis structure $(L,r)$ are seen to be equivalent to the single condition
$$
(r^{tg}, r^{cotg})(TL)\subset TL.
$$
In the particular case of Poisson-Nijenhuis structures $(\pi,r)$, this last condition becomes
$$
T\pi^\sharp \circ r^{cotg} = r^{tg} \circ T\pi^\sharp,
$$
which provides a re-formulation of the compatibility conditions
(1) and (2) as one natural equation (see Proposition \ref{prop:tgcotPN}). It is also recalled in Example~\ref{ex:hol_str} that holomorphic structures on (real) vector bundles (regarded as partial flat connections as in \cite{Ra}) can be conveniently seen as particular 1-derivations. When $r$ is a complex structure on $M$, the $1$-derivations determined by $D^r$ and $D^{r,*}$  codify the holomorphic structures
on $TM$ and $T^*M$ arising from their usual identifications with $T^{1,0}M$ and $(T^{1,0}M)^*$, respectively (see Example~\ref{ex:holtancot}). By means of these  identifications,   we prove in Proposition \ref{prop:hol_dirac} that Dirac-Nijenhuis structures $(L, r)$, with $L\subset \T M$, are equivalent to holomorphic Dirac structures $\mathcal{L}\subset \T^{1,0}M = T^{1,0}M\oplus (T^{1,0}M)^*$.

%% Properties, holomoprhic

After the definition and basic examples of Dirac-Nijenhuis structures are presented in \S \ref{sec:comp_Dirac}, in \S \ref{sec:properties} we discuss some of their fundamental properties, such as their behavior with respect to backward and forward Dirac maps (Propositions~\ref{prop:backdirac} and \ref{prop:forward_dirac}), the geometry of their presymplectic leaves  and Poisson quotients, as well as the existence of admissible functions in involution ($\S$ \ref{subsec:traces}) and hierarchies of Dirac structures ($\S$ \ref{subsec:hierarchy}).

%%% integrability

The last two sections of the paper concern integration to Lie groupoids. A central result in Poisson geometry is the infinitesimal-global correspondence relating Poisson manifolds to objects known as {\em symplectic groupoids}, see e.g. \cite{catfel,CDW,CF} and references therein; the extension of this fact to Dirac geometry  is a correspondence between Dirac manifolds and  {\em presymplectic groupoids}  \cite{bcwz}. In this paper we enhance this picture by considering additional compatible $(1,1)$-tensor fields at infinitesimal and global levels.

As observed in $\S$ \ref{sec:multforms}, the notion of compatibility of a 2-form  with a given $(1,1)$-tensor field $r$ naturally extends to differential forms of arbitrary degrees
(one can in fact consider the compatibility of $r$ with arbitrary tensor fields through suitable extensions of the operators $D^r$ and $D^{r,*}$, though this is not discussed here), see Definition~\ref{dfn:nij_form}.  We consider Lie groupoids $\mathcal{G}$ equipped with a {\em compatible} pair $(\omega, K)$, where $\omega$ is a multiplicative differential form and $K$ is a multiplicative $(1,1)$-tensor field,
and provide their description at Lie-algebroid level in Theorem \ref{thm:compatibility_G}. As a consequence, when $K$ is a complex structure making $\mathcal{G}$ into a holomorphic Lie groupoid, we obtain in Theorem \ref{thm:hol_dif} a holomorphic version of \cite[Thm.~4.6]{HA} describing the infinitesimal counterparts of multiplicative differential forms. By means of these general results, we derive in $\S$ \ref{sec:int_dirac}  the infinitesimal-global correspondence between  Dirac-Nijenhuis structures and presymplectic-Nijenhuis groupoids in Theorem \ref{thm:main_integration} (see also Remark~\ref{rem:quasiint}), as well as the special case of this correspondence relating holomorphic Dirac structures and holomorphic presymplectic groupoids, see Theorems \ref{thm:diffpresgrp} and \ref{thm:integholDirac}. (Concrete constructions of holomorphic presymplectic groupoids integrating holomorphic Dirac structures arising in Poisson-Lie theory can be found in \cite{BIL}.)
Our differentiation-integration results are schematically illustrated  in the next diagram.

\bigskip

%%%%%%%%%%  Flow chart diagram  %%%%%%%%%

\begin{figure}[h!]
\label{fig1}
\centering

\tikzset{every picture/.style={line width=0.75pt}} %set default line width to 0.75pt        

\begin{tikzpicture}[x=0.75pt,y=0.75pt,yscale=-1,xscale=1]
%uncomment if require: \path (0,300); %set diagram left start at 0, and has height of 300

%Flowchart: Alternative Process [id:dp48762715042445803] 
\draw   (261.74,188.75) .. controls (261.74,185.06) and (264.73,182.08) .. (268.41,182.08) -- (349.77,182.08) .. controls (353.45,182.08) and (356.44,185.06) .. (356.44,188.75) -- (356.44,213.53) .. controls (356.44,217.21) and (353.45,220.2) .. (349.77,220.2) -- (268.41,220.2) .. controls (264.73,220.2) and (261.74,217.21) .. (261.74,213.53) -- cycle ;
%Left Arrow [id:dp9427287619769309] 
\draw   (205.95,131.78) -- (227.64,145.87) -- (224.93,147.92) -- (246.71,177.18) -- (237.79,183.91) -- (216.01,154.65) -- (213.3,156.7) -- cycle ;
%Left Arrow [id:dp8784689378875328] 
\draw   (337.24,49.82) -- (322.15,62.91) -- (322.15,56.36) -- (275.6,56.36) -- (275.6,43.28) -- (322.16,43.28) -- (322.16,36.74) -- cycle ;
%Rounded Rect [id:dp6077184003551124] 
\draw   (30.8,31.01) .. controls (30.8,19.27) and (40.32,9.75) .. (52.06,9.75) -- (233.76,9.75) .. controls (245.5,9.75) and (255.02,19.27) .. (255.02,31.01) -- (255.02,94.77) .. controls (255.02,106.51) and (245.5,116.03) .. (233.76,116.03) -- (52.06,116.03) .. controls (40.32,116.03) and (30.8,106.51) .. (30.8,94.77) -- cycle ;
%Rounded Rect [id:dp10942284296769444] 
\draw   (369.15,31.78) .. controls (369.15,19.72) and (378.93,9.94) .. (390.99,9.94) -- (598.18,9.94) .. controls (610.23,9.94) and (620.01,19.72) .. (620.01,31.78) -- (620.01,97.28) .. controls (620.01,109.34) and (610.23,119.11) .. (598.18,119.11) -- (390.99,119.11) .. controls (378.93,119.11) and (369.15,109.34) .. (369.15,97.28) -- cycle ;
%Left Arrow [id:dp48066296730934543] 
\draw   (274.61,89.52) -- (289.7,76.43) -- (289.69,82.97) -- (336.24,82.97) -- (336.24,96.06) -- (289.69,96.06) -- (289.69,102.6) -- cycle ;
%Left Arrow [id:dp8808466205894494] 
\draw   (410.64,132.07) -- (402.9,156.87) -- (400.22,154.78) -- (377.99,183.69) -- (369.18,176.82) -- (391.41,147.91) -- (388.74,145.82) -- cycle ;
%Shape: Rectangle [id:dp4722509413594229] 
\draw   (41.61,68.11) -- (238.6,68.11) -- (238.6,110.37) -- (41.61,110.37) -- cycle ;
%Shape: Rectangle [id:dp01015747600384076] 
\draw   (147.98,18.46) -- (238.6,18.46) -- (238.6,110.37) -- (147.98,110.37) -- cycle ;
%Shape: Rectangle [id:dp3084281403857603] 
\draw   (381.51,68.5) -- (607.46,68.5) -- (607.46,113.28) -- (381.51,113.28) -- cycle ;
%Shape: Rectangle [id:dp031806537818800606] 
\draw   (516.3,16.99) -- (607.46,16.99) -- (607.46,113.28) -- (516.3,113.28) -- cycle ;

% Text Node
\draw (264.67,194.29) node [anchor=north west][inner sep=0.75pt]  [font=\scriptsize] [align=left] {(1,1)-tensor field};
% Text Node
\draw (138.09,153.15) node [anchor=north west][inner sep=0.75pt]  [font=\tiny] [align=left] {Compatibility\\with Dirac \\structures};
% Text Node
\draw (279.64,25.3) node [anchor=north west][inner sep=0.75pt]  [font=\tiny] [align=left] {Integration};
% Text Node
\draw (42.84,70.99) node [anchor=north west][inner sep=0.75pt]  [font=\scriptsize] [align=left] {Dirac-Nijenhuis\\Structures\\};
% Text Node
\draw (382.66,73.28) node [anchor=north west][inner sep=0.75pt]  [font=\scriptsize] [align=left] {Presymplectic-Nijenhuis\\groupoids\\};
% Text Node
\draw (42.66,17.79) node [anchor=north west][inner sep=0.75pt]  [font=\scriptsize] [align=left] {IM forms \\compatible\\with 1-derivations\\};
% Text Node
\draw (378.72,20.81) node [anchor=north west][inner sep=0.75pt]  [font=\scriptsize] [align=left] {Multiplicative forms\\compatible with \\multiplicative (1,1)-tensors\\};
% Text Node
\draw (279.79,111.3) node [anchor=north west][inner sep=0.75pt]  [font=\tiny] [align=left] {Differentiation\\};
% Text Node
\draw (412.8,156.22) node [anchor=north west][inner sep=0.75pt]  [font=\tiny] [align=left] {Compatibility\\with differential \\forms};
% Text Node
\draw (153.49,33.34) node [anchor=north west][inner sep=0.75pt]  [font=\scriptsize] [align=left] {Holomorphic \\IM forms\\};
% Text Node
\draw (152.33,77.87) node [anchor=north west][inner sep=0.75pt]  [font=\scriptsize] [align=left] {Holomorphic \\Dirac structures\\};
% Text Node
\draw (523.21,69.74) node [anchor=north west][inner sep=0.75pt]  [font=\scriptsize] [align=left] {Holomorphic\\presymplectic\\groupoids\\};
% Text Node
\draw (523.33,23.27) node [anchor=north west][inner sep=0.75pt]  [font=\scriptsize] [align=left] {Holomorphic\\multiplicative \\forms};

\end{tikzpicture}

\caption{Compatibility vs. integration}
\end{figure}
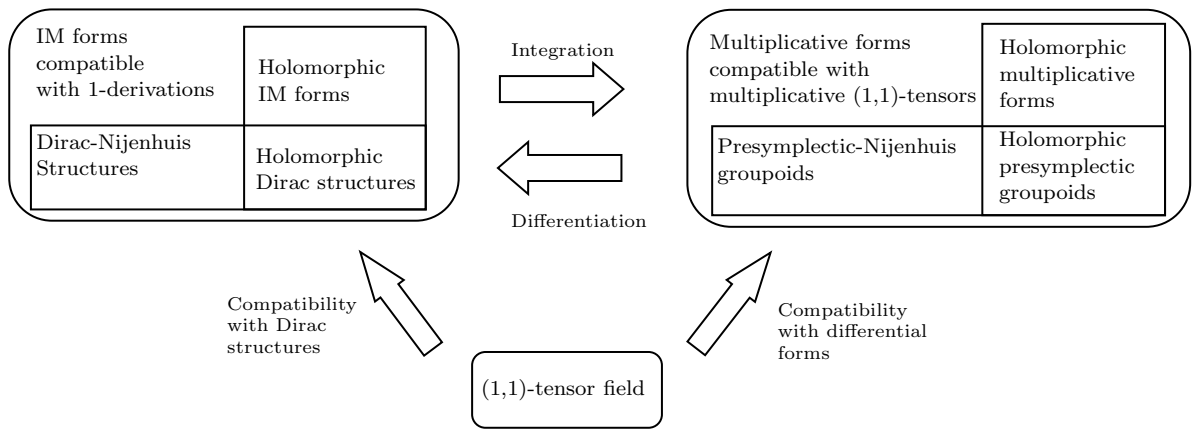

%%%%%%%%%%%%%%%%%%%%%%%%%%%%%%%%%%%%%%%%%%%

\bigskip

\noindent{\bf Acknowledgments}. 
We thank Pedro Frejlich, Jiang-Hua Lu, Juan Carlos Marrero and Igor Mencattini for stimulating discussions. We are grateful to CNPq, Capes and Faperj for financial support.

\tableofcontents

\section{Preliminaries: linear (1,1)-tensors and 1-derivations}\label{sec:more}

In this section we explain the correspondence between linear $(1,1)$-tensor fields on vector bundles and objects that we call {\em $1$-derivations}, a higher-degree analogue of the equivalence between linear vector fields and derivations on vector bundles. We then explain how the operators $D^r$ and $D^{r,*}$ naturally arise in this context.

%%%%%%%%%%%%%%%%%%%%%%%%%%%%%%%%%%%%%%%%%%%%%%%%%%%%%%%%%%
\subsection{The correspondence}\label{subsec:1_tensors}

Let $q: E\to M$ be a (real) vector bundle, and consider its tangent bundle $TE\to E$. Recall that $TE$ carries a second vector-bundle structure $Tq: TE\to TM$ (its ``tangent prolongation'') making it a {\em double vector bundle} \cite{Mac-book}. We denote by $V_E=\mathrm{Ker}(Tq)\subset TE$ the vertical bundle of $E$.

A $(1,1)$-tensor field $K\in \Gamma(T^*E\otimes TE)=\Omega^1(E,TE)$ %(alternatively, a {\em vector-valued 1-form}) 
on the total space of $E$
is called {\em linear} if, regarded as a map $K: TE\to TE$, it is a vector-bundle morphism
$$
\xymatrix{
TE \ar[d] \ar[r]^{K} & TE \ar[d] \\
TM \ar[r]^{r} & TM,\\
}
$$
for some $r\in \Omega^1(M,TM)$. Note that linear $(1,1)$-tensors are higher-degree analogs of linear vector fields on $E$, 
which are elements  in $\Omega^0(E,TE)=\Gamma(TE)$ that, viewed as maps $E\to TE$, are vector-bundle morphisms from $E\to M$ to $TE\to TM$. (Equivalently, linear vector fields are characterized as those with flows given by vector-bundle automorphisms.)

Linear vector fields on $E$ are known to be in one-to-one correspondence with {\em derivations} of $E$, i.e., pairs  $(\Delta,X)$, where $\Delta:\Gamma(E)\to \Gamma(E)$ is $\mathbb{R}$-linear and $X\in \mathfrak{X}(M)$ is a vector field, satisfying the Leibniz rule
$$
\Delta(f u)= f\Delta(u) + (\Lie_Xf)u,
$$
for $u\in \Gamma(E)$ and $f\in C^\infty(M)$. One direct way to define the derivation $(\Delta,X)$ corresponding to a linear vector field $Z$ is via the relations
\begin{equation}\label{dfn:derivation}
\Delta(u)^\uparrow = [u^\uparrow, Z], \qquad  q^*\<\alpha, X\> = \< q^*\alpha, Z\>,
\end{equation}
for  $u\in\Gamma(E), \, \alpha \in \Omega^1(M)$; here 
$u^\uparrow \in \mathfrak{X}(E)$ is the {\em vertical lift } of $u\in \Gamma(E)$, 
$$u^\uparrow(e) = \frac{d}{d\tau}\Big|_{\tau=0}(e+\tau u(x)), \qquad \mbox{ for } e\in E|_x.
$$

As proven in \cite[Sec.~4.4]{HT}, this correspondence between linear vector fields and derivations is part of a general characterization of linear tensor fields on vector bundles, which establishes, in particular, a bijection between linear tensors of type $(1,p)$ (i.e., vector-valued forms) on $E$ and objects called {\em generalized derivations}, see \cite[Sec.~2]{T}. More specifically, linear $(1,1)$-tensor fields on $E$ bijectively correspond to {\em generalized derivations of degree 1} (or {\em 1-derivations} for simplicity), which are triples $(D,l,r)$, where $D: \Gamma(E)\to \Omega^1(M,E)$ is $\mathbb{R}$-linear, $l\in \mathrm{End}(E)$, $r\in \Omega^1(M,TM)$, satisfying the Leibniz-type rule
\begin{equation}\label{eq:genleibniz}
D(f u) = f D(u) + df\wedge l(u) -r^*(df) \wedge u,
\end{equation}
for $u\in \Gamma(E)$ and $f\in C^\infty(M)$. In particular, each $X\in \mathfrak{X}(M)$ gives rise to an operator $D_X: \Gamma(E)\to \Gamma(E)$, $D_X(u):= i_X(D(u))$, satisfying
\begin{equation}\label{eq:DXleibniz}
D_X(f u) = f D_X(u) + (\Lie_X f)l(u)-(\Lie_{r(X)}f)u. 
\end{equation}

We collect the main facts that we will need regarding linear $(1,1)$-tensors in the next result. Consider the map $\mathcal{V}: \Omega^1(M,E)\to \Omega^1(E,TE)$, $\alpha\otimes u \mapsto q^*\alpha \otimes u^\uparrow$, for $\alpha\in \Omega^1(M)$ and $u\in \Gamma(E)$.

\begin{thm} \label{thm:bijection}
\begin{itemize}
    \item[(i)] Any linear $(1,1)$-tensor $K$ on $E\to M$ defines a 1-derivation $(D,l,r)$ via 
    \begin{equation}\label{eq:1-1relations}
    \Lie_{u^\uparrow}K= \mathcal{V}(D(u)) ,\;\;\; K(u^\uparrow)= l(u)^\uparrow , \;\;\; 
    \langle K, q^*\beta \rangle = q^*\langle \beta, r \rangle  
    \end{equation}
 for $u\in \Gamma(E)$ and $\beta\in \Omega^1(M)$,  and this correspondence establishes a bijection between linear $(1,1)$-tensor fields and 1-derivations.
    
    \item[(ii)] For $i=1,2$, let $E_i\to M$ be a vector bundle equipped with a linear $(1,1)$-tensor field $K_i$, with corresponding 1-derivation $(D^i,l_i,r_i)$. Let $\Phi: E_1\to E_2$ be a vector-bundle map over the identity (and we keep the same notation for the induced map between spaces of sections). 
    Then $T\Phi\circ K_1 = K_2\circ T\Phi$ if and only if 
    \begin{equation}\label{eq:morprel}
    \Phi\circ D^1_X = D^2_X\circ \Phi,\;\;\; \Phi\circ l_1 = l_2\circ \Phi, \;\;\; r_1=r_2.
    \end{equation}
    for all $X\in \mathfrak{X}(M)$.
\end{itemize}
\end{thm}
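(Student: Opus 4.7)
The plan is to follow the general template relating linear tensors on vector bundles with generalized derivations \cite{HT,T}, specialized here to the $(1,1)$-case; both parts reduce to tracking how $K$ interacts with the two natural types of vector fields on $E$, namely vertical lifts $u^\uparrow$ and linear vector fields $Z$, which together generate $\mathfrak{X}(E)$ as a $C^\infty(E)$-module.

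For part (i), the data $(l,r)$ would be extracted directly: $r$ is the base map of $K$, and the linearity of $K$ forces $K(V_E)\subseteq V_E$, so under the canonical identification $V_E\cong q^*E$ the restriction of $K$ yields $l\in\mathrm{End}(E)$; these are the last two equalities in \eqref{eq:1-1relations}. To construct $D(u)$, the key step is to show $\Lie_{u^\uparrow}K$ lies in the image of $\mathcal{V}$. I would test it on the two types of spanning fields. On vertical lifts, $[u^\uparrow,v^\uparrow]=0$ and $K(v^\uparrow)=l(v)^\uparrow$ give $(\Lie_{u^\uparrow}K)(v^\uparrow)=0$. On a linear $Z$ covering $Y\in\mathfrak{X}(M)$, the image $K(Z)$ is again linear (covering $r(Y)$), so both $[u^\uparrow,KZ]$ and $K[u^\uparrow,Z]$ are vertical lifts; moreover this vertical lift is $C^\infty(M)$-linear in $Y$ and vanishes whenever $Y=0$. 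Together this identifies a unique $D(u)\in\Omega^1(M,E)$ with $\mathcal{V}(D(u))=\Lie_{u^\uparrow}K$, which is the first relation in \eqref{eq:1-1relations}.

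The Leibniz identity \eqref{eq:genleibniz} would then follow from Cartan calculus applied to $(fu)^\uparrow=(q^*f)\,u^\uparrow$, namely
\[
\Lie_{(q^*f)u^\uparrow}K = q^*f\cdot \Lie_{u^\uparrow}K + d(q^*f)\wedge \iota_{u^\uparrow}K - K^*(d(q^*f))\wedge u^\uparrow,
\]
combined with $K^*(q^*df)=q^*(r^*df)$ (a rewriting of the third relation in \eqref{eq:1-1relations}) and the identity $\iota_{u^\uparrow}K=l(u)^\uparrow$ (from the second). For the inverse correspondence, given $(D,l,r)$ I would define $K$ on vertical lifts and on any linear $Z$ by inverting the three relations in \eqref{eq:1-1relations}, then check well-definedness against the structural identity $[u^\uparrow,Z]=(\Delta_Z u)^\uparrow$ and $C^\infty(E)$-linearly extend; the two constructions are mutually inverse by unfolding definitions.

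For part (ii), the decisive observation is that a bundle map $\Phi$ over $\id_M$ satisfies $T\Phi(u^\uparrow(e))=\Phi(u)^\uparrow(\Phi(e))$, so $u^\uparrow$ and $\Phi(u)^\uparrow$ are $\Phi$-related. Applying $T\Phi\circ K_1=K_2\circ T\Phi$ to $u^\uparrow$ and to the pairing with $q^*\beta$ yields directly $\Phi\circ l_1=l_2\circ\Phi$ and $r_1=r_2$. The $D$-relation is then obtained from the naturality of Lie derivatives under $\Phi$-related data, which gives $T\Phi\circ\Lie_{u^\uparrow}K_1=\Lie_{\Phi(u)^\uparrow}K_2\circ T\Phi$ and, via $\mathcal{V}$, translates to $\Phi\circ D^1_X=D^2_X\circ\Phi$ for every $X\in\mathfrak{X}(M)$. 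The converse is immediate: two linear $(1,1)$-tensors agreeing on vertical lifts and on pairings with basic forms, and with the same Lie derivatives along vertical lifts, must coincide, by (i). The principal obstacle in the whole argument is the verification that $\Lie_{u^\uparrow}K$ actually factors as $\mathcal{V}(\alpha)$ for some $\alpha\in\Omega^1(M,E)$; this is where the linearity hypothesis on $K$ is used in an essential way, through the stability of linear vector fields under $K$ and the vanishing of brackets between vertical lifts.
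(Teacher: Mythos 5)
Your argument is correct in outline but follows a genuinely different route from the paper's. The paper outsources part (i) entirely to \cite{HT} and proves part (ii) in Appendix~\ref{subsec:proof} by a different mechanism: it decomposes an arbitrary element of $TE$ as $U = Tu(X) +_{\scriptscriptstyle{T}} h(X,e)$ and establishes the explicit formulas $K(Tu(X)) = Tu(r(X)) +_{\scriptscriptstyle{T}} h(r(X),D_X(u))$ and $K(h(X,v)) = h(r(X),l(v))$, the first of which is derived via the reversal isomorphism $\mathcal{R}\colon T^*(E^*)\to T^*E$ and a pairing identity quoted from \cite[Lemma~4.9]{HT}; part (ii) then reduces to comparing these formulas term by term using $T\Phi(Tu(X))=T(\Phi u)(X)$ and $T\Phi(h(X,e))=h(X,\Phi(e))$. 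You instead work throughout with the spanning family of vertical lifts and linear vector fields: for (i) you verify that $\Lie_{u^\uparrow}K$ kills vertical lifts and sends a linear $Z$ covering $Y$ to a vertical lift depending tensorially on $Y$ (this is correct: $K(Z)$ is again linear covering $r(Y)$, so both $[u^\uparrow,KZ]$ and $K[u^\uparrow,Z]$ are vertical lifts of sections, and the expression vanishes when $Y=0$ and is $C^\infty(M)$-linear in $Y$), and your Cartan-type identity for $\Lie_{(q^*f)u^\uparrow}K$ does yield \eqref{eq:genleibniz}; for (ii) you use $\Phi$-relatedness of $u^\uparrow$ and $\Phi(u)^\uparrow$ together with the flow description of the Lie derivative. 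Your approach is more self-contained (no cotangent/reversal machinery), at the cost of having to supply the part-(i) analysis the paper simply cites. Two points deserve more care. First, in the converse of (ii) you invoke the injectivity of (i), but $T\Phi\circ K_1$ and $K_2\circ T\Phi$ are not $(1,1)$-tensors on a single bundle, so (i) does not literally apply; you need the relative version of your own spanning argument, namely that once $r_1=r_2$ both composites cover the same map to $TM$, their difference is $V_{E_2}$-valued, and it vanishes on vertical lifts and on vectors of the form $Tu(X)$ (equivalently on values of linear vector fields) -- this is exactly where the paper's explicit formula for $K(Tu(X))$ does the work. Second, the sign conventions require attention: with the paper's normalization $\Delta(u)^\uparrow=[u^\uparrow,Z]$ the bookkeeping in your identification of $D_Y(u)$ from $(\Lie_{u^\uparrow}K)(Z)$ must be made consistent with \eqref{eq:DXleibniz}, so the signs should be checked rather than asserted. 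Neither issue is a gap in the idea; both are routine to repair within your framework.
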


Part (i) follows from  the general results in\cite{HT}. We present a proof of part (ii) in Appendix~\ref{subsec:proof}.
A consequence of the theorem (keeping the notation of part (i)) is that a subbundle $(F\to M)\subseteq (E\to M)$ satisfies $K(TF)\subseteq TF$ if and only if $l(F)\subseteq F$ and $D(\Gamma(F))\subseteq \Gamma(F)$.

\begin{ex}[Connections]{\em 
The one-to-one correspondence between linear $(1,1)$-tensors and $1$-derivations on $E\to M$ in Theorem~\ref{thm:bijection} (i) restricts to a bijection between linear $(1,1)$-tensors $\Theta: TE\to TE$ satisfying $\Theta^2=\Theta$ and $\mathrm{Im}(\Theta)=V_E$ and 1-derivations $(D,l,r)$ such that $l=\mathrm{id}_E$ and $r=0$. Note that any such $\Theta$ is completely determined by the choice of a linear subbundle (with respect to both linear structures on $E$) $H=\mathrm{Ker}(\Theta) \subset TE$ such that $TE=V_E\oplus H$, whereas $D: \Gamma(E)\to \Omega^1(M,E)$ defines a 1-derivation of the type $(D,\mathrm{id_E},0)$ if and only if it is a  connection operator on $E$. In other words, we recover the equivalence between two viewpoints to linear connections on $E$: via horizontal bundles or covariant derivatives, see e.g. \cite[$\S$11.11]{KMS}. (The operators $\Theta$ above are closely related to the so-called ``connectors''  of linear connections \cite[$\S$11.10]{KMS}.)}
\end{ex}

\begin{ex}[Holomorphic structures]\label{ex:hol_str}\em 
 A holomorphic vector bundle can be defined as a complex vector bundle over a complex manifold $M$ equipped with a flat $T^{0,1}M$-connection \cite{Ra}. Equivalently, one may regard a holomorphic vector bundle as a {\em real} vector bundle $E\to M$ together with a {\em holomorphic structure} consisting of
 (1) a complex structure  $r: TM\to TM$ on $M$, (2)
 a fiberwise complex structure on $E$, i.e., an endomorphism
 $l: E\to E$ with $l^2=-\id_E$,  and (3) a flat $T^{0,1}M$-connection $\nabla$ on the complex vector bundle $(E,l)$. The partial connection $\nabla: \Gamma(T^{0,1}M)\times \Gamma(E) \to \Gamma(E)$ is the same as a Dolbeault operator $\Do: \Gamma(E) \to \Gamma((T^{0,1}M)^*\otimes E)$, so we refer to it as a {\em Dolbeault connection}. A (local) section of $E$ is holomorphic if and only if it is flat. The holomorphic structure on $E$ may be alternatively represented by the triple $(D^{\mathrm{hol}}, l, r)$, where $D^{\mathrm{hol}}: \Gamma(E)\to \Omega^1(M,E)$ is given by
 \begin{equation}\label{eq:nablaD}
D^{\mathrm{hol}}_X(u) := l(\nabla_{X+\ii r(X)} u).
\end{equation}
One may verify that $(D^{\mathrm{hol}}, l, r)$ is in fact a 1-derivation on $E$, that we will call the  \textit{Dolbeault 1-derivation} associated with a holomorphic vector bundle. Such 1-derivations admit an intrinsic characterization as the 1-derivations $(D,l,r)$ on a vector bundle $E\to M$ satisfying the following set of equations (see \cite{HT}):
\begin{equation}\label{eq:J_square}
r^2= -\mathrm{id}_{TM}, \;\;\; l^2=-\mathrm{id}_{E}, \;\;\; l(D_X(u))+ D_{r(X)}(u)  =0,   
\end{equation}
as well as 
\begin{equation}\label{eq:IMNij}
%\begin{cases}
\N_r(X,Y) = 0,\;\;\,
l(D_X(u) = D_X(l(u)),\;\;\,
l(D_{[X, Y]}(u)) = [D_X, D_Y](u) + D_{[X,Y]_r}(u),
%\end{cases}
\end{equation}
for all $X, Y \in \frakx(M)$ and $u \in \Gamma(E)$, where $\N_r$ is the Nijenhuis torsion of $r$ (see \eqref{eq:NijT} below) and
$[\cdot,\cdot]_r$ is given by $[X,Y]_r =[r(X),Y]+[X,r(Y)]-r([X,Y]$. Note that these equations just spell out conditions (1), (2) and (3) above, for $\nabla$ defined by $\nabla_{X + \ii r(X)} u = -l (D_X(u))$.
The linear (1,1)-tensor on $E$ associated to a Dolbeault 1-derivation  $(D^{\mathrm{hol}},l,r)$ via Theorem~\ref{thm:bijection} is the complex structure $J$ on the total space of $E\to M$, see  \cite[\S 6]{HT} and \cite[\S 5]{T} (here \eqref{eq:J_square} codifies the fact that $J^2=-\mathrm{id}_{TE}$ while conditions \eqref{eq:IMNij} give the vanishing of its Nijenhuis torsion).

\end{ex}

%%%%%%%%%%%%%%%%%%%%%%%%%%%%%%%%%%%%%%%%%%%%
%\subsection{Duality}
Analogously to ordinary derivations, any 1-derivation $(D,l,r)$ on $E$ gives rise to a {\em dual} 1-derivation on $E^*$ defined by the triple $(D^*,l^*,r)$, where $l^*: E^*\to E^*$ is dual to $l$, and $D^*$ is defined by the equation
\begin{equation}\label{eq:dualDgeral}
\<D^*_X(\xi), u \> = \Lie_X \<\xi, l(u)\> -\Lie_{r(X)}\<\xi,u\> - \<\xi,D_X(u)\>,     
\end{equation}
for $u\in \Gamma(E)$, $\xi\in \Gamma(E^*)$, and $X\in \mathfrak{X}(M)$. 

One can also describe this duality in terms of the corresponding linear $(1,1)$-tensor fields. For such  $K: TE \to TE$ with corresponding  1-derivation $(D,l,r)$, we denote by $K^\top: T(E^*) \to T(E^*)$ the linear $(1,1)$-tensor corresponding to $(D^*, l^*, r)$. 
Consider the non-degenerate pairing $\tleft \cdot,\, \cdot \tright: TE \times_{TM} T(E^*) \to TM\times \R$
obtained by differentiation of the natural pairing between $E$ and $E^*$. 
One can verify that, for every $(U, V) \in TE \times_{TM} T(E^*)$,
\begin{equation}\label{eq:K_dual_char}
\tleft K(U), K^\top(V) \tright = \tleft Tl(U), V \tright.
\end{equation}

%%%%%%%%%%%%%%%%%%%%%%%%%%%%%%%%%%%%%%%%%%%%

\subsection{The operators $D^r$ and $D^{r,*}$}
%Tangent and cotangent lifts of (1,1)-tensors}
\label{subsec:tancotan}
For a given $(1,1)$-tensor field $r$ on $M$,
consider the following nonlinear operators: $D^r: \mathfrak{X}(M) \to \Omega^1(M,TM)$,
\begin{align}\label{eq:Dr}
D^r_X(Y) =  i_X(D^r(Y)) & : = (\Lie_{Y}r)(X)\\ 
\nonumber
& = [Y, r(X)] - r([Y,X]), 
\end{align} 
for $X,Y \in \mathfrak{X}(M)$, and 
 $D^{r,*}: \Omega^1(M) \to \Omega^1(M,T^*M)$,
\begin{align}\label{eq:D*}
D^{r,*}_X(\alpha) =  i_X(D^{r,*}(\alpha)) & : =  \Lie_{X} r^*(\alpha) - \Lie_{r(X)}\alpha  \\ \nonumber
& = i_Xd(r^*\alpha) - i_{r(X)}d\alpha,
\end{align}
for $X \in \mathfrak{X}(M)$ and $\alpha\in \Omega^1(M)$. (The last equality in \eqref{eq:D*} follows from Cartan's formula.) 
We now recall from \cite[\S 3]{T} how these operators naturally arise as 1-derivations.

Any $(1,1)$-tensor field $r$ on $M$ gives rise to two  $(1,1)$-tensor fields, one on $TM$ and another on $T^*M$,
$$
r^{\tg} \in \Omega^1(TM, T(TM)),\qquad r^{\cotg}\in \Omega^1(T^*M,T(T^*M)),
$$
known as the {\em tangent} and {\em cotangent lifts} of $r$, see e.g. \cite{YI}.
%We will briefly recall their definitions and some of their properties (see \cite{T} and references therein).
The tangent lift $r^{\tg}: T(TM)\to T(TM)$ is obtained from the tangent map $Tr$ via $J_M\circ Tr \circ J_M$, where $J_M: T(TM)\to T(TM)$ is the canonical involution of the double tangent bundle \cite[\S 9.6]{Mac-book}. For the definition of the cotangent lift $r^{\cotg}$, consider the canonical symplectic structure $\omega_{\mathrm{can}}$ on the cotangent bundle $T^*M$, and let $\phi_r:= r^*: T^*M \to T^*M$. Then $r^{cotg}$ is determined by
$$
i_{r^{cotg}(U)}\omega_{\mathrm{can}} = i_U(\phi_r^*\,\omega_{\mathrm{can}}),
$$
see also \cite{Crampin}.

The next result is proven in \cite[\S 3]{T}.

\begin{prop}\label{prop:tancot1der}
For $r\in \Omega^1(M,TM)$, the $(1,1)$-tensor fields $r^{\tg}$ and $r^{\cotg}$ are linear and
satisfy $r^{cotg} = (r^{tg})^\top$. Their corresponding 1-derivations are $(D^r,r,r)$ and $(D^{r,*}, r^*, r)$.
\end{prop}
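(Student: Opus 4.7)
The plan is to verify the characterizing relations \eqref{eq:1-1relations} of Theorem~\ref{thm:bijection}(i) for $r^{tg}$, and then derive the claims about $r^{cotg}$ from the duality identity \eqref{eq:K_dual_char} together with the algebraic fact that $(D^{r,*}, r^*, r)$ is the dual of $(D^r, r, r)$ in the sense of \eqref{eq:dualDgeral}.

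For $r^{tg}$, linearity is immediate from the construction $r^{tg} = J_M \circ Tr \circ J_M$: since $J_M$ exchanges the two vector-bundle structures on $T(TM)$ and $Tr$ is a bundle morphism over $r$ with respect to the tangent prolongation $Tq_M\colon T(TM)\to TM$, the composition is a bundle morphism over $r$ with respect to the other projection $q_{TM}\colon T(TM)\to TM$. This also delivers the third relation in \eqref{eq:1-1relations}. The identity $r^{tg}(Y^\uparrow) = (rY)^\uparrow$ --- giving $l = r$ --- follows from fiberwise linearity of $r$: in tangent coordinates one checks directly that $Tr$ acts on vertical vectors by applying $r$, and the two applications of $J_M$ handle the swapping of roles. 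For the remaining Lie-derivative relation $\Lie_{Y^\uparrow}(r^{tg}) = \mathcal{V}(D^r(Y))$, I would evaluate both sides on complete lifts $X^c$ and vertical lifts $X^\uparrow$ of $X\in \mathfrak{X}(M)$. On vertical directions both sides vanish. On complete lifts, the classical identity $r^{tg}(X^c) = (rX)^c$ combined with the bracket formula $[Y^\uparrow, X^c] = [Y,X]^\uparrow$ gives $\Lie_{Y^\uparrow}(r^{tg})(X^c) = [Y, r(X)]^\uparrow - r([Y,X])^\uparrow = (D^r_X(Y))^\uparrow$, matching $\mathcal{V}(D^r(Y))(X^c)$.

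For $r^{cotg}$, the efficient route is to establish $r^{cotg} = (r^{tg})^\top$ via the pairing identity \eqref{eq:K_dual_char}. Setting $\phi_r = r^*$, the defining relation $i_{r^{cotg}(V)}\omega_{\mathrm{can}} = i_V(\phi_r^*\omega_{\mathrm{can}})$ can be rewritten, upon expressing the natural pairing $\tleft \cdot,\cdot \tright$ through the evaluation $TM\times_M T^*M \to \mathbb{R}$, as $\tleft r^{tg}(U), r^{cotg}(V)\tright = \tleft Tr(U), V\tright$. This simultaneously yields linearity of $r^{cotg}$ and identifies its 1-derivation with the dual of $(D^r, r, r)$. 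To match this dual with $(D^{r,*}, r^*, r)$, I would expand both sides of \eqref{eq:dualDgeral} using Leibniz for $\Lie$ and verify that they reduce to the common expression $\Lie_X\langle r^*\alpha, Y\rangle - \Lie_{r(X)}\langle \alpha, Y\rangle + \langle \alpha, [r(X), Y]\rangle - \langle r^*\alpha, [X, Y]\rangle$.

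The main obstacle is the pairing identity \eqref{eq:K_dual_char} for $r^{cotg}$: it requires carefully relating the canonical symplectic form on $T^*M$ (used in the definition of $r^{cotg}$) to the natural nondegenerate pairing between the tangent prolongations $T(TM)$ and $T(T^*M)$. Once this is settled, what remains is routine bracket and Leibniz manipulation guided by the defining relations of Theorem~\ref{thm:bijection}(i).
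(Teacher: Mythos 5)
First, a caveat: the paper does not actually prove this proposition --- it is quoted from \cite[\S 3]{T} --- so there is no internal argument to compare yours against, and I can only assess your plan on its own terms. Your computations for the tangent lift are essentially correct: $r^{\tg}(X^c)=(r(X))^c$, $r^{\tg}(X^\uparrow)=(r(X))^\uparrow$ and $[Y^\uparrow,X^c]=[Y,X]^\uparrow$ do give $(\Lie_{Y^\uparrow}r^{\tg})(X^c)=(D^r_X(Y))^\uparrow$ and the vanishing on vertical lifts, which verifies the first two relations of \eqref{eq:1-1relations}. However, in the linearity argument you have the two vector-bundle structures on $T(TM)$ interchanged: $Tr$ covers $r$ with respect to the \emph{standard} projection $q_{TM}$ (it maps $T_v(TM)$ into $T_{r(v)}(TM)$) and covers $\mathrm{id}_{TM}$ with respect to the prolongation $Tq_M$ (since $q_M\circ r=q_M$). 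Conjugation by $J_M$ therefore makes $r^{\tg}$ cover $\mathrm{id}$ with respect to $q_{TM}$ --- which is what makes it a $(1,1)$-tensor field on $TM$ at all --- and cover $r$ with respect to $Tq_M$, which is the linearity condition of \S\ref{subsec:1_tensors} and yields the third relation of \eqref{eq:1-1relations}. As written, your conclusion (``a bundle morphism over $r$ with respect to $q_{TM}$'') is false and would contradict $r^{\tg}$ preserving the tangent spaces of $TM$; the method is right, but the labels must be swapped.

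The more serious gap is in the cotangent half. Identity \eqref{eq:K_dual_char} is a \emph{consequence} of the definition of $K^\top$ (the paper defines $K^\top$ as the linear tensor corresponding to the dual 1-derivation), not a characterization of it: it constrains $K^\top(V)$ only through pairings with vectors of the form $K(U)$, and when $r$ is not invertible the vectors $r^{\tg}(U)$ do not span $T(TM)$. So even after establishing $\tleft r^{\tg}(U),r^{\cotg}(V)\tright=\tleft Tr(U),V\tright$, you cannot conclude $r^{\cotg}=(r^{\tg})^\top$. A cleaner logical order reverses your two steps: verify the three relations of \eqref{eq:1-1relations} for $r^{\cotg}$ directly from the symplectic definition (for instance, $r^{\cotg}(\alpha^\uparrow)=(r^*\alpha)^\uparrow$ and $\Lie_{\alpha^\uparrow}r^{\cotg}=\mathcal{V}(D^{r,*}(\alpha))$ follow from standard identities such as $\Lie_{\alpha^\uparrow}\theta_{\mathrm{can}}=q^*\alpha$ together with $\phi_r^*\omega_{\mathrm{can}}$ computations, and linearity over $r$ amounts to $Tq_{T^*M}\circ r^{\cotg}=r\circ Tq_{T^*M}$), thereby showing that the 1-derivation of $r^{\cotg}$ is $(D^{r,*},r^*,r)$. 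The equality $r^{\cotg}=(r^{\tg})^\top$ is then immediate from the bijection of Theorem~\ref{thm:bijection}(i) combined with your (correct) verification that $(D^{r,*},r^*,r)$ is dual to $(D^r,r,r)$ in the sense of \eqref{eq:dualDgeral}, i.e.\ identity \eqref{eq:dualD}; the relation between $\omega_{\mathrm{can}}$ and the tangent pairing that you defer as ``the main obstacle'' is then not needed at all.
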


In particular, the operators $D^r$ and $D^{r,*}$ in \eqref{eq:Dr} and \eqref{eq:D*} satisfy the Leibniz-type identity in \eqref{eq:genleibniz} and are dual to one another as 1-derivations:
\begin{equation}\label{eq:dualD}
\<D^{r,*}_X(\alpha), Y \> = \Lie_X \<\alpha, r(Y)\> -\Lie_{r(X)}\<\alpha,Y\> - \<\alpha,D^r_X(Y)\>.     
\end{equation}

\begin{rmk}[Naturality properties]\label{rem:naturality}
{\em Consider $r_i\in \Omega(M_i, TM_i)$, $i=1,2$, and
a smooth map $\psi: M_1\to M_2$. Suppose that $r_1$ and $r_2$ are {$\psi$-related}, i.e., $r_2\circ d\psi = d\psi \circ r_1$. Let $X_i\in \mathfrak{X}(M_i)$, $i=1,2$, be vector fields which are also $\psi$-related;  we use the notation $X_1\sim_{\psi} X_2$. The following can be directly verified: 
\begin{align}
&Y_1\sim_\psi Y_2 \;\; \Rightarrow \;\; D^{r_1}_{X_1}(Y_1)\sim_\psi D^{r_2}_{X_2}(Y_2), \label{eq:natural1}
\\ 
& D^{r_1,*}_{X_1}(\psi^*\beta)=\psi^* D^{r_2,*}_{X_2}(\beta),\quad \forall\, \beta\in \Omega^1(M_2).\label{eq:natural2}
\end{align}
}
\hfill $\diamond$
\end{rmk}

We will be particularly interested in   \textit{Nijenhuis operators} $r\in \Omega^1(M,TM)$, i.e., endomorphisms $r:TM \to TM$ with vanishing Nijenhuis torsion $\N_r \in \Omega^2(M,TM)$:
\begin{equation}\label{eq:NijT}
\N_r(X, Y) := [r(X),r(Y)] - r([r(X),Y] + [X,r(Y)] - r[X,Y])=0.
\end{equation}

We note that the Nijenhuis torsion of $r\in \Omega^1(M,TM)$ admits the following expressions in terms of $D^r$ and $D^{r,*}$: for $X, Y \in \mathfrak{X}(M)$ and $\alpha\in \Omega^1(M)$,
\begin{align}
\label{Nij_D}\N_r(X,Y) & = r(D_X^r(Y)) - D^r_X(r(Y)) \\\label{eq:dualNij}
\<\alpha, \N_r(X,Y)\> & = \<r^*(D_X^{r,*}(\alpha))- D_X^{r,*}(r^*(\alpha)), Y\>.
\end{align}

The next example explains the meaning of the operators $D^r$ and $D^{r,*}$ when $r\in \Omega^1(M,TM)$ is a complex structure, i.e., $r$ is a Nijenhuis operator satisfying $r^2=-\id$.

\begin{ex}[Tangent and cotangent holomorphic structures]\label{ex:holtancot}
\em On a complex manifold $(M,r)$, $T^{1,0}M$ and $(T^{1,0}M)^*$ are holomorphic vector bundles. Explicitly, the Dolbeault connection on $T^{1,0}M$ is given by
$$
\nabla_{X^{\scriptscriptstyle{0,1}}}(Y^{\scriptscriptstyle{1,0}}) = \pr_{10}([X^{\scriptscriptstyle{0,1}}, Y^{\scriptscriptstyle{1,0}}]), 
$$
where $\pr_{10}$ is the projection of $TM\otimes \mathbb{C}= T^{1,0}M\oplus T^{0,1}M$ onto $T^{1,0}M$, 
while $(T^{1,0}M)^*$ carries the dual Dolbeault connection, defined by the condition
\begin{equation}\label{eq:dualDol}
\<\nabla^*_{X^{\scriptscriptstyle{0,1}}}(\alpha^{\scriptscriptstyle{1,0}}), Y^{\scriptscriptstyle{1,0}}\>  = \Lie_{X^{\scriptscriptstyle{0,1}}}\<\alpha^{\scriptscriptstyle{1,0}}, Y^{\scriptscriptstyle{1,0}}\> - \<\alpha^{\scriptscriptstyle{1,0}}, \nabla_{X^{\scriptscriptstyle{0,1}}}(Y^{\scriptscriptstyle{1,0}})\>,
\end{equation}
where $X^{\scriptscriptstyle{0,1}} \in \mathfrak{X}^{0,1}(M)$,  $Y^{\scriptscriptstyle{1,0}} \in \mathfrak{X}^{1,0}(M)$, and $\alpha^{\scriptscriptstyle{1,0}} \in \Omega^{1,0}(M)$.
Consider the usual identifications (of real vector bundles)
\begin{equation}\label{eq:identifPhi}
\Phi_T: TM\to T^{1,0}M, \; X\mapsto \frac{1}{2}(X-\ii r(X)),\qquad \Phi_{T^*}: T^*M \to (T^{1,0}M)^*,\;  \alpha \mapsto \alpha - \ii r^*(\alpha). 
\end{equation}
One can then verify (see \cite[Prop.~5.3]{T}) that the Dolbeault 1-derivations codifiying the holomorphic structures on $TM$ and $T^*M$ resulting from these identifications are $(D^r,r,r)$ and $(D^{r,*},r^*,r)$; in particular, by Proposition~\ref{prop:tancot1der}, the complex structures on $TM$ and $T^*M$ are $r^{\tg}$ and $r^{\cotg}$, respectively.
\end{ex}

%%%%%%%%%%%%%%%%%%%%%%%%%%%%%%%%%%%%%%%%%%%%%%%%%%%%%%%%%%
\section{Compatibility of Dirac structures and $(1,1)$-tensor fields}
\label{sec:comp_Dirac}
We introduce in this section a notion of compatibility for (almost) Dirac structures and $(1,1)$-tensor fields,
extending the well-known case of Poisson-Nijenhuis manifolds that we recall first.

%%%%%%%%%%%%%%%%%%%%%%%%%%%%%%%%%%%%%%%%%%%%
\subsection{The Poisson-Nijenhuis compatibility revisited}\label{subsec:PNrevisit}

On a manifold $M$, let $\pi \in \mathfrak{X}^2(M)$ be a bivector field and $r\in \Omega^1(M,TM)$.
Consider the bundle map $\pi^\sharp: T^*M\to TM$, $\alpha\mapsto i_\alpha \pi$.

\begin{defn}\label{def:rpi}
We say that the pair $(\pi,r)$ is {\em compatible} if the following conditions hold:
\begin{itemize}
\item[(1)] $\pi^\sharp \circ r^*  = r \circ \pi^\sharp$;
\item[(2)] For $X \in \frakx(M)$, $\alpha \in \Omega^1(M)$, we have
\begin{equation}\label{dfn:mm_concomitant}
R_\pi^r(X, \alpha) := \pi^\sharp(\Lie_X r^*(\alpha) - \Lie_{r(X)}\alpha) - (\Lie_{\pi^\sharp(\alpha)}r)(X) =0.
\end{equation}
\end{itemize}
\end{defn}

Note that by (1) we have another bivector field $\pi_r \in \frakx^2(M)$ defined by the condition  
$$
\pi_r^\sharp = r \circ \pi^\sharp.
$$
We also notice that $R_\pi^r: \mathfrak{X}(M)\times \Omega^1(M)\to \mathfrak{X}(M)$ is $C^\infty(M)$-bilinear if and only if  condition (1) holds; in this case $R_\pi^r$ defines a tensor field called  the \textit{Magri-Morosi concomitant} \cite{MM}.

\begin{rmk}\label{rem:alternatives}
{\em In the literature, condition (2) above is often expressed through the vanishing of  an alternative concomitant, $C^r_\pi: \Omega^1(M)\times \Omega^1(M) \to \Omega^1(M)$, given by
\begin{equation}\label{dfn:schw_concomitant}
C_\pi^r(\alpha, \beta) := [\alpha,\beta]_{\pi_r} - ([r^*(\alpha), \beta]_\pi + [\alpha, r^*(\beta)]_\pi - r^*([\alpha, \beta]_\pi)), 
\end{equation}
where $[\cdot,\cdot]_\Lambda$ is the bracket on $\Omega^1(M)$ defined by a bivector field $\Lambda$ via
$[\alpha, \beta]_\Lambda := \Lie_{\Lambda^\sharp(\alpha)} \beta - i_{\Lambda^\sharp(\beta)}d\alpha$.
The concomitants $R^r_\pi$ and $C^r_\pi$ are related by
$$
\<\beta, R_\pi^r(X, \alpha)\> = \<C_\pi^r(\alpha, \beta), X\>,
$$
so $R_\pi^r = 0$ if and only if $C_\pi^r = 0$. \hfill $\diamond$}
\end{rmk}

A \textit{Poisson-Nijenhuis structure} (or simply a {\em PN structure}) on a manifold $M$, as introduced in \cite{MM} (see also \cite{KS-Magri90}),  is a compatible pair $(\pi, r)$, where  $\pi$ is a Poisson bivector field and $r$ is a Nijenhuis operator. 
In this setting,  a geometric interpretation of the compatibility conditions in terms of Lie bialgebroids can be found in \cite{Kos96}. 

Our extension of the compatibility condition of PN structures to Dirac structures relies on a suitable reformulation of condition (2) above, see \cite[\S 2.3]{T}. Let $\pi$ be a bivector field on $M$ and $r\in \Omega^1(M,TM)$. Consider the operators $D^r$ and $D^{r,*}$ defined in \eqref{eq:Dr} and \eqref{eq:D*}.
Then the Magri-Morosi concomitant can be written as
\begin{equation}\label{eqn:mm_alt}
R_\pi^r(X,\alpha) = \pi^\sharp(D^{r,*}_X(\alpha)) - D^r_X(\pi^\sharp(\alpha)). 
\end{equation}

As a consequence of Prop.~\ref{prop:tancot1der} and Theorem~\ref{thm:bijection}, part (ii), 
we have the following alternative description  of the compatibility of the pair $(\pi,r)$ in terms of tangent and cotangent lifts:

\begin{prop}\label{prop:tgcotPN}
The pair $(\pi,r)$ is compatible if and only if $T\pi^\sharp\circ r^{\cotg} = r^{\tg}\circ T\pi^\sharp$.
\end{prop}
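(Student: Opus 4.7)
The plan is to invoke Theorem~\ref{thm:bijection}(ii) with the vector-bundle map $\Phi = \pi^\sharp: T^*M \to TM$ (over the identity of $M$), taking $E_1 = T^*M$ equipped with $K_1 = r^{\cotg}$ and $E_2 = TM$ equipped with $K_2 = r^{\tg}$. By Proposition~\ref{prop:tancot1der}, the 1-derivations associated with these linear $(1,1)$-tensors are
\[
(D^1, l_1, r_1) = (D^{r,*}, r^*, r), \qquad (D^2, l_2, r_2) = (D^r, r, r).
\]
Theorem~\ref{thm:bijection}(ii) then asserts that $T\pi^\sharp \circ r^{\cotg} = r^{\tg}\circ T\pi^\sharp$ if and only if the following three conditions hold:
\[
\pi^\sharp \circ D^{r,*}_X = D^r_X \circ \pi^\sharp \quad \forall\, X \in \mathfrak{X}(M), \qquad \pi^\sharp \circ r^* = r \circ \pi^\sharp, \qquad r = r.
\]

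The third is automatic. The second is exactly condition (1) of Definition~\ref{def:rpi}. For the first, I would simply quote the reformulation \eqref{eqn:mm_alt} of the Magri--Morosi concomitant, namely
\[
R_\pi^r(X,\alpha) \;=\; \pi^\sharp(D^{r,*}_X(\alpha)) - D^r_X(\pi^\sharp(\alpha)),
\]
which shows that the first condition is equivalent to the vanishing of $R_\pi^r$, i.e.\ condition (2) of Definition~\ref{def:rpi}. Conjoining the two yields precisely the compatibility of the pair $(\pi,r)$, completing the proof.

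There is no real obstacle here: the proposition is essentially a direct translation afforded by Theorem~\ref{thm:bijection}(ii) once the 1-derivations associated with $r^{\tg}$ and $r^{\cotg}$ are identified via Proposition~\ref{prop:tancot1der}, and the concomitant is written in the form \eqref{eqn:mm_alt}. The only thing to be careful about is to verify that all three clauses of \eqref{eq:morprel} correspond, one by one, to the two compatibility conditions (plus the trivially satisfied matching of anchors $r_1 = r_2$).
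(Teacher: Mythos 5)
Your proof is correct and is essentially the paper's own argument: the proposition is stated there precisely as a consequence of Proposition~\ref{prop:tancot1der} and Theorem~\ref{thm:bijection}(ii), with the identity \eqref{eqn:mm_alt} translating the first clause of \eqref{eq:morprel} into the vanishing of the Magri--Morosi concomitant. Nothing is missing.
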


%%%%%%%%%%%%%%%%%%%%%%%%%%%%%%%%%%%%%%%%%
 
\subsection{The presymplectic-Nijenhuis compatibility revisited}\label{subsec:omegaN}

Given $\omega\in \Omega^2(M)$ and $r\in \Omega^1(M,TM)$, we now introduce a notion of compatibility that is parallel to the one in Section~\ref{subsec:PNrevisit}.

Consider $\omega^\flat: TM\to T^*M$, $X \mapsto i_X \omega$, and define (in analogy with \eqref{eqn:mm_alt})
\begin{align}\label{eq:Stilde}
\widetilde{S}^r_\omega(X,Y) &:=  D^{r,*}_X(\omega^\flat(Y))-\omega^\flat(D^r_X(Y))
 \\
& =
i_Xd(r^*\circ \omega^{\flat}(Y))-   i_{r(X)}d\omega^{\flat}(Y) - \omega^{\flat}([Y,r(X)]-r([Y,X])), \nonumber
\end{align}
for $X$, $Y \in \mathfrak{X}(M)$. 

Suppose that $\omega^\flat\circ r = r^* \circ \omega^\flat$,  so that we have $\omega_r \in \Omega^2(M)$ given by 
$$
i_X\omega_r:=i_{r(X)}\omega.
$$
Let $(d\omega)_r\in \Gamma(T^*M\otimes \wedge^2 T^*M)$ be given by
$i_X(d\omega)_r=i_{r(X)} d\omega$.

\begin{lem}
\label{lem:Stilde}
If $\omega^\flat\circ r = r^* \circ \omega^\flat$,  then $\widetilde{S}^r_\omega = d(\omega_r) - (d\omega)_r$.
\end{lem}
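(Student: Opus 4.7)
The plan is pure Cartan calculus: I would simplify $\widetilde{S}^r_\omega(X,Y)$ step by step and show that it equals $i_Y i_X(d\omega_r - (d\omega)_r)$, which is the assertion once both sides are read as the $3$-tensor obtained by evaluating on $(X,Y,Z)$. The hypothesis $\omega^\flat\circ r = r^*\circ\omega^\flat$ is used at the outset to rewrite $r^*(\omega^\flat(Y)) = \omega^\flat(r(Y)) = i_{r(Y)}\omega$. Expanding $D^{r,*}_X$ and $D^r_X$ via \eqref{eq:D*} and \eqref{eq:Dr}, this turns the defining expression into
$$\widetilde{S}^r_\omega(X,Y) = \Lie_X(i_{r(Y)}\omega) - \Lie_{r(X)}(i_Y\omega) - i_{[Y,r(X)]}\omega + i_{r([Y,X])}\omega.$$

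The next step is to recognize this as $i_Y$ of a $2$-form. Applying the standard commutation rule $\Lie_V\circ i_W = i_W\circ\Lie_V + i_{[V,W]}$, together with $i_Y\omega_r = i_{r(Y)}\omega$ (which is the definition of $\omega_r$), I obtain $i_Y\Lie_X\omega_r = \Lie_X(i_{r(Y)}\omega) - i_{r([X,Y])}\omega$ and $i_Y\Lie_{r(X)}\omega = \Lie_{r(X)}(i_Y\omega) - i_{[r(X),Y]}\omega$. Subtracting the second from the first and using the antisymmetry identities $i_{[r(X),Y]}\omega = -i_{[Y,r(X)]}\omega$ and $-i_{r([X,Y])}\omega = i_{r([Y,X])}\omega$, the two correction terms match exactly the last two terms of the previous display, so
$$\widetilde{S}^r_\omega(X,Y) = i_Y\bigl(\Lie_X\omega_r - \Lie_{r(X)}\omega\bigr).$$

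Finally, I would apply Cartan's magic formula $\Lie = d\circ i + i\circ d$ to both Lie derivatives. Since $i_X\omega_r = i_{r(X)}\omega$, the exact parts $d(i_X\omega_r)$ and $d(i_{r(X)}\omega)$ coincide and cancel in the difference, leaving $\Lie_X\omega_r - \Lie_{r(X)}\omega = i_X d\omega_r - i_{r(X)}d\omega$, which by the definition of $(d\omega)_r$ equals $i_X(d\omega_r - (d\omega)_r)$. Combining with the previous step yields the identity. I do not anticipate any real obstacle: the argument is local Cartan-calculus bookkeeping, and the only detail requiring attention is the sign reconciliation between the brackets $[Y,r(X)]$ and $[r(X),Y]$ in the middle step.
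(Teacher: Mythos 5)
Your argument is correct and is essentially the same as the paper's: both proofs expand the defining expression for $\widetilde{S}^r_\omega(X,Y)$ and reduce it by Cartan calculus (the commutation rule $\Lie_V i_W - i_W\Lie_V = i_{[V,W]}$ plus Cartan's magic formula), using $i_X\omega_r=i_{r(X)}\omega$ to cancel the exact and Lie-derivative terms. The only difference is cosmetic bookkeeping --- the paper pivots on $\Lie_Y$ while you pivot on $\Lie_X$ and $\Lie_{r(X)}$ before extracting $i_Y$ --- and both land on $\widetilde{S}^r_\omega(X,Y)=i_Yi_X\bigl(d(\omega_r)-(d\omega)_r\bigr)$.
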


\begin{proof}
The expression for $\widetilde{S}^r_\omega$ in \eqref{eq:Stilde} can be re-written as follows:
%\begin{equation}\label{eq:Stilde2}
$$
\widetilde{S}^r_\omega(X,Y) = i_Xdi_Y \omega_r - i_{r(X)}di_Y\omega -i_{[Y,r(X)]}\omega + i_{[Y,X]}\omega_r.
%\end{equation}
$$
From the identities 
\begin{align*}
& i_Xdi_Y \omega_r = i_X\Lie_Y\omega_r-i_Xi_Y d(\omega_r), &i_{r(X)}di_Y\omega = i_{r(X)}\Lie_Y\omega - i_{r(X)}i_Yd\omega\\ 
& i_{[Y,r(X)]}\omega = \Lie_{Y}i_{r(X)}\omega-i_{r(X)}\Lie_{Y}\omega , & i_{[Y,X]}\omega_r = \Lie_Yi_X\omega_r -i_X\Lie_Y\omega_r,
\end{align*}
we see that
$$
\widetilde{S}^r_\omega(X,Y)=-i_Xi_Yd(\omega_r)+i_{r(X)}i_Yd\omega = i_Yi_Xd(\omega_r) - i_Yi_X(d\omega)_r.
$$
\end{proof}

\begin{defn}\label{def:romega}
For $\omega\in \Omega^2(M)$ and $r\in \Omega^1(M,TM)$, we say that the pair $(\omega,r)$ is {\em compatible} if
\begin{itemize}
    \item[(1)]  $\omega^\flat \circ r=  r^* \circ \omega^\flat$,
    \item[(2)] $\widetilde{S}^r_\omega=0$ (equivalently, $d(\omega_r)=(d\omega)_r$). 
\end{itemize}
\end{defn}
%The equivalence in $(2)$ follows from Lemma~\ref{lem:Stilde}.

In analogy with Proposition~\ref{prop:tgcotPN}, this notion of compatibility has the following interpretation:

\begin{prop}\label{prop:tancotOmN}
The pair $(\omega,r)$ is compatible if and only if $r^{\cotg} \circ T\omega^\flat = T\omega^\flat \circ r^{\tg}$.
\end{prop}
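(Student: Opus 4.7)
The plan is to deduce the proposition as a direct application of Theorem~\ref{thm:bijection}(ii) combined with the identification of the 1-derivations associated with $r^{\tg}$ and $r^{\cotg}$ supplied by Proposition~\ref{prop:tancot1der}. This is exactly parallel to how Proposition~\ref{prop:tgcotPN} was obtained for the Poisson case, but now the vector-bundle morphism is $\omega^\flat: TM \to T^*M$ instead of $\pi^\sharp: T^*M \to TM$.

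Concretely, I would apply Theorem~\ref{thm:bijection}(ii) with $E_1 = TM$ and $E_2 = T^*M$, both over the identity of $M$, taking $K_1 = r^{\tg}$, $K_2 = r^{\cotg}$, and $\Phi = \omega^\flat$. By Proposition~\ref{prop:tancot1der}, the associated 1-derivations are $(D^r, r, r)$ and $(D^{r,*}, r^*, r)$ respectively; in particular the base $(1,1)$-tensors $r_1 = r_2 = r$ automatically agree, so that third condition in \eqref{eq:morprel} is free. The theorem then tells us that $T\omega^\flat \circ r^{\tg} = r^{\cotg} \circ T\omega^\flat$ is equivalent to the two remaining conditions
\begin{equation*}
\omega^\flat \circ r = r^* \circ \omega^\flat, \qquad \omega^\flat \circ D^r_X = D^{r,*}_X \circ \omega^\flat \quad \text{for all } X\in \mathfrak{X}(M).
\end{equation*}

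The first of these is precisely condition (1) of Definition~\ref{def:romega}. For the second, evaluating on an arbitrary $Y \in \mathfrak{X}(M)$ and rearranging gives
\begin{equation*}
D^{r,*}_X(\omega^\flat(Y)) - \omega^\flat(D^r_X(Y)) = 0,
\end{equation*}
which by the very definition \eqref{eq:Stilde} is the vanishing of $\widetilde{S}^r_\omega(X,Y)$, i.e.\ condition (2) of Definition~\ref{def:romega}. This yields the desired equivalence.

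I do not expect any real obstacle here: once the dictionary between linear $(1,1)$-tensors and 1-derivations in Theorem~\ref{thm:bijection} is in hand, and once $r^{\tg}$, $r^{\cotg}$ are identified with the 1-derivations $(D^r,r,r)$, $(D^{r,*},r^*,r)$, everything is a bookkeeping translation. The only mildly subtle point is to observe that $\omega^\flat$ is genuinely a vector-bundle map over the identity (so that Theorem~\ref{thm:bijection}(ii) applies), and that the two conditions produced by the theorem match, on the nose, the two clauses of Definition~\ref{def:romega} as rewritten via \eqref{eq:Stilde}.
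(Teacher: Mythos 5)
Your argument is correct and is exactly the route the paper intends: Proposition~\ref{prop:tancotOmN} is stated ``in analogy with'' Proposition~\ref{prop:tgcotPN}, which the paper derives precisely from Theorem~\ref{thm:bijection}(ii) together with Proposition~\ref{prop:tancot1der}, and your translation of the two conditions in \eqref{eq:morprel} into clauses (1) and (2) of Definition~\ref{def:romega} via \eqref{eq:Stilde} is on the nose.
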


Alongside Poisson-Nijenhuis structures,
Magri and Morosi considered in \cite{MM} 
  {\em presymplectic-Nijenhuis structures} (called {\em $\Omega N$-structures} therein), which were defined as compatible pairs $(\omega,r)$, where $\omega\in \Omega^2(M)$ is closed and $r \in \Omega^1(M,TM)$ is Nijenhuis. In this case, condition (2) above reduces to $d(\omega_r)=0$.

\begin{rmk}\label{rem:S}\em
In the original formulation of \cite{MM}, condition $(2)$ above was written as the vanishing of a concomitant $S^r_\omega\in \Omega^2(M,T^*M)$ given by 
\begin{equation*}\label{eq:S}
S_\omega^r(X, Y):= i_X\mathcal{L}_{r(Y)} \omega - i_Y\mathcal{L}_{r(X)}\omega - i_{r([X,Y])}\omega + d(\omega(r(Y),X)).
\end{equation*}
Using condition (1), a direct application of Cartan calculus
shows that  
\begin{equation*}\label{eq:S_omega_rel}
S^r_\omega(X,Y)= i_Yi_Xd(\omega_r) - i_Yi_{r(X)}d\omega - i_{r(Y)}i_X d\omega.
\end{equation*}
When $\omega$ is closed, we see that 
$S^r_\omega = d(\omega_r) = \widetilde{S}^r_\omega$. In general, the concomitant $S^r_\omega$ does not agree with our $\widetilde{S}^r_\omega$ in \eqref{eq:S}; they are related by $\widetilde{S}^r_\omega(X,Y)= S^r_\omega(X,Y) + d\omega(X, r(Y))$. 
\hfill $\diamond$
\end{rmk}

%%%%%%%%%%%%%%%%%%%%%%%%%%%%%%%%%%%%%%%%%%%
\subsection{Dirac-Nijenhuis structures} 
Let $M$ be a smooth manifold, and consider  
the direct sum $\mathbb{T}M =TM \oplus T^*M$
with natural projections $\pr_T: \T M \to TM$ and $\pr_{T^*}: \T M \to T^*M$. We equip $\T M$ with
the nondegenerate fiberwise symmetric pairing
$$
\<(X,\alpha), (Y, \beta)\> = \beta(X) + \alpha(Y),
$$
and the Courant-Dorfman bracket $\Cour{\cdot,\cdot}$ on $\Gamma(\mathbb{T}M)=\mathfrak{X}(M)\oplus\Omega^1(M)$,
$$
\Cour{(X,\alpha), (Y,\beta)} = ([X,Y], \Lie_X \beta - i_Y d\alpha).
$$

A subbundle $L\subset \mathbb{T}M$ is called {\em lagrangian} if $L=L^\perp$ with respect to $\<\cdot,\cdot\>$. A \textit{Dirac structure} \cite{Cour} on $M$ is a lagrangian subbundle $L \subset \T M$ which is involutive with respect to $\Cour{\cdot, \cdot}$. Important examples
of lagrangian subbundles include bivector fields $\pi$ and 2-forms $\omega$, viewed as subbundles of $\T M$ by means of the graphs of the 
corresponding maps $\pi^\sharp: T^*M\to TM$ and $\omega^\flat: TM\to T^*M$; the additional involutivity condition to make these subbundles into Dirac structures amount to $\pi$ being Poisson and $\omega$ being closed.

Given $r\in \Omega^1(M,TM)$,
consider the operator 
\begin{equation}\label{eq:sumDr}
\mathbb{D}^r: \Gamma(\mathbb{T}M)\to \Omega^1(M,\mathbb{T}M), \qquad \mathbb{D}^r=(D^r,D^{r,*}).
\end{equation}
For each vector field $X\in \mathfrak{X}(M)$ we then have an $\R$-linear operator $\mathbb{D}^r_X: \Gamma(\mathbb{T}M) \rightarrow \Gamma(\mathbb{T}M )$,
\begin{equation}\label{dfn:big_D}
\mathbb{D}^r_X(Y,\alpha)=( D^{r}_X(Y), D^{r,*}_X(\alpha)),
\end{equation}
for $(Y,\alpha) \in \Gamma(\mathbb{T}M)=\mathfrak{X}(M)\oplus \Omega^1(M)$. 
From the Leibniz-type equations for $D^r$ and $D^{r,*}$, we see that $\mathbb{D}^r$ satisfies 
\begin{equation}\label{eq:leibniz_D}
\D^r_X(f \,\sigma) = f \,\D(\sigma) + (\Lie_Xf)(r,r^*)(\sigma) - (\Lie_{r(X)}f)\,\sigma,
\end{equation}
for $\sigma \in \Gamma(\T M)$, and $f \in C^{\infty}(M)$, so the triple $(\mathbb{D}^r, (r,r^*), r)$ is a 1-derivation on $\T M$.

\begin{defn}\em \label{defDN} Let $L \subset \mathbb{T}M$ be a lagrangian subbundle and $r \in \Omega^1(M,TM)$. We say that the pair $(L,r)$ is \textit{compatible} if 
\begin{itemize}
    \item [$(1)$] $(r,r^*)(L) \subseteq L$
    \item [$(2)$] $\mathbb{D}^r_X(\Gamma(L))\subseteq \Gamma(L), \: \forall X \in \mathfrak{X}(M).$
\end{itemize}
\end{defn}

(Note that this notion of compatibility makes sense for any subbundle of $\T M$, not necessarily lagrangian.)

The previous definition has an interpretation that naturally extends Propositions~\ref{prop:tgcotPN} and \ref{prop:tancotOmN}: a pair $(L,r)$ is compatible if and only if 
$$
TL\subset T(TM\oplus T^*M) = T(TM)\times_{TM} T(T^*M)
$$ 
satisfies $(r^{\tg},r^{\cotg})(TL)\subset TL$. 

\begin{rmk}\label{rem:concom} {\em 
Condition (2) in Definition~\ref{defDN} can  be also written in terms of the vanishing of a concomitant: for a lagrangian subbundle $L\subset \T M$ satisfying $(r,r^*)(L) \subseteq L$, the expression 
\begin{equation}
C^r_L(\sigma_1,\sigma_2) := \<\D^r_{(\cdot)}(\sigma_1), \sigma_2\> \in \Omega^1(M), \qquad \sigma_1,\,\sigma_2 \in \Gamma(L),
\end{equation}
defines an element in $\Gamma(\wedge^2 L^* \otimes T^*M)$. (The skew-symmetry of $C^r_L$ is a consequence of the duality \eqref{eq:dualD}, and the $C^\infty(M)$-linearity follows from \eqref{eq:leibniz_D}.) The fact that $L$ is lagrangian implies that (2) holds if and only if $C^r_L = 0$.}
\hfill $\diamond$
\end{rmk}

\begin{defn}
A \textit{Dirac-Nijenhuis structure} on $M$ is a compatible pair $(L,r)$ where $L$ is a Dirac structure and $r$ is a Nijenhuis operator. 
\end{defn}

More general Dirac-Nijenhuis structures in abstract ``Courant-Nijenhuis algebroids'' will be the subject of \cite{BDC}.

\begin{ex}\label{exam:pn_dn}\em
Let $\pi \in \frakx^2(M)$ and consider the lagrangian subbundle $L_\pi := \mathrm{graph}(\pi^\sharp)\subset \T M$. Given $r\in \Omega^1(M,TM)$, then $(r,r^*)(L_\pi) \subseteq L_\pi$ if and only if $\pi^\sharp \circ r^* = r\circ \pi^\sharp$. It follows from \eqref{eqn:mm_alt} that $\D^r_X(\Gamma(L_\pi))\subset \Gamma(L_\pi)$ if and only if $R_\pi^r=0$. Hence the pair $(L_\pi,r)$ is compatible if and only if so is the pair $(\pi, r)$. (Under the natural identification $L_\pi \cong T^*M$,  the concomitants $C_{L_\pi}^r$ and $C_\pi^r$, see Remarks~\ref{rem:alternatives} and \ref{rem:concom}, are in fact identified.) In particular, $(\pi,r)$ is a Poisson-Nijenhuis structure if and only if $(L_\pi,r)$ is Dirac-Nijenhuis.
\end{ex}

Other examples of Dirac-Nijenhuis structures can be found on submanifolds of Poisson-Nijenhuis structures, see Corollary \ref{cor:DNsubm}.

\begin{ex} \label{ex:OmegaN}
{\em For $\omega\in \Omega^2(M)$, let $L_\omega= \mathrm{graph}(\omega^\flat) \subset \T M$ be the corresponding lagrangian subbundle. Let $r\in \Omega^1(M,TM)$. Then  condition $(r,r^*)(L_\omega) \subseteq L_\omega$ is equivalent to $\omega^{\flat} \circ r= r^* \circ \omega^{\flat}$, while condition $\mathbb{D}^r_X(\Gamma(L)) \subseteq  \Gamma(L)$ is equivalent to 
$\widetilde{S}^r_\omega =0$, see \eqref{eq:Stilde}. 
So the pair $(\omega,r)$ is compatible  if and only if so is the pair $(L_\omega,\omega)$.
When $\omega$ is closed, $\widetilde{S}^r_\omega= d (\omega_r)$
by Lemma~\ref{lem:Stilde}, so 
$\mathbb{D}^r_X(\Gamma(L)) \subseteq  \Gamma(L)$ if and only if $d(\omega_r) = 0$.
In particular, $(\omega,r)$ is a presymplectic-Nijenhuis structure if and only if $(L_\omega,r)$ is a Dirac-Nijenhuis structure.}
\end{ex}

Recall that any  involutive distribution $F\subseteq TM$ defines a Dirac structure $F \oplus \mathrm{Ann}(F)\subseteq \T M$.
The next example illustrates Dirac-Nijenhuis structures of this type (cf. \cite[Sec.~6]{LB2}).

\begin{ex}\label{ex:im(r)}\em
Let $r$ be a Njenhuis operator of constant rank, and consider the distribution $F=\mathrm{Im}(r)\subseteq TM$.
The Nijenhuis property implies that $F$ is involutive, so $L_r:= F \oplus \mathrm{Ann}(F)$ is a Dirac structure.
Since $\Ann(\mathrm{Im}(r)) = \ker(r^*)$, it is simple to check that $(r,r^*)(L_r) \subseteq L_r$. Condition (2) in Definition~\ref{defDN} follows from the identities \ref{Nij_D} and \eqref{eq:dualNij}, so $(L_r,r)$ is Dirac-Nijenhuis.
\end{ex}

\begin{rmk}[Quasi-Nijenhuis]\label{rem:quasi}
\em There are variants of the definition of Dirac-Nijenhuis structure obtained by weakening the integrability conditions on $L$ or $r$. For example, a {\em Dirac quasi-Nijenhuis} structure is a triple $(L,r,\phi)$, where $L$ is a Dirac structure compatible with $r\in \Omega^1(M,TM)$, $\phi\in \Omega^3(M)$ is a closed 3-form,
and
\begin{equation}\label{eq:quasi}
\<\alpha, \N_r(Y,Z)\> = - \phi(X,Y,Z), \qquad \forall\,  (X,\alpha)\in L.
\end{equation}
Notice that even when $\phi=0$ this condition is weaker than the vanishing of the Nijenhuis torsion $\N_r$: it just says that the image of $\N_r$ lies in $L\cap TM = \mathrm{Ann}(\mathrm{pr}_{T^*}(L))$. When $L$ is the graph of a Poisson structure $\pi$, then \eqref{eq:quasi} becomes
$$
\N_r(X,Y) = \pi^\sharp(\phi(X,Y,\cdot)),
$$
hence recovering the Poisson quasi-Nijenhuis structures of \cite[Def.~3.3]{SX} 
%[*** say something about the additional compatibility of $\phi$ and $r$?]. 
\hfill $\diamond$ 
\end{rmk}

%%%%%%%%%%%%%%%%%%%%%%%%%%%%%%%%%%%
\subsection{Holomorphic Dirac structures}\label{subsc:holDirac}
Suppose that $r\in \Omega^1(M,TM)$ is a complex structure on $M$. It is known (see e.g. \cite[Sec.~3]{LMX2}) that holomorphic Poisson structures on the complex manifold $(M,r)$ are characterized by the fact that their real parts are Poisson-Nijenhuis structures with respect to $r$. We will now extend this result to Dirac structures.

On a complex manifold $(M,r)$, we consider the holomorphic vector bundle $\T^{1,0}M= T^{1,0}M\oplus (T^{1,0}M)^*$ equipped with its  natural \textit{holomorphic Courant algebroid} structure, consisting of the non-degenerate symmetric bilinear pairing $\<\cdot,\cdot\>$ on $\T^{1,0}M$ (obtained from the duality of $T^{1,0}M$ and $(T^{1,0}M)^*$), the projection $\T^{1,0}M \to T^{1,0}M$ and the $\C$-bilinear bracket on the sheaf of holomorphic sections $\Gamma_{\mathrm{ hol}}(\cdot, \T^{1,0}M)$ given by
$$
\Cour{(\mathsf{X},\xi), (\mathsf{Y}, \zeta)}=([\mathsf{X},\mathsf{Y}], \mathcal{L}_{\mathsf{X}}\zeta -\textit{i}_{\mathsf{Y}}\partial\xi),
$$
where $(\mathsf{X},\xi), (\mathsf{Y}, \zeta)$ are local holomorphic sections of $\T^{1,0}M$.
 A \textit{holomorphic Dirac structure on $M$} (see e.g. \cite{Gual14}) is a holomorphic subbundle $\mathcal{L} \subset \T^{1,0}M$ which is lagrangian with respect to $\<\cdot, \cdot\>$ and whose sheaf of holomorphic sections is involutive with respect to $\Cour{\cdot,\cdot}$.

To explain the sense in which holomorphic Dirac structures are equivalent to Dirac-Nijenhuis structures, note that, from Example~\ref{ex:holtancot}, the 
holomorphic structure on $\T M$ coming from its identification with $\T^{1,0} M$ via 
\begin{equation}\label{dfn:phi_iso}
\Phi: \T M \to \T^{1,0} M,\qquad
    \Phi(X,\alpha) = \Big ( \frac{1}{2}(X-\ii r(X)), \alpha - \ii r^*(\alpha) \Big )
\end{equation}
is given by the Dolbeault 1-derivation $(\mathbb{D}^r, (r,r^*), r)$.
Another key property of the map $\Phi$ is that it preserves Courant brackets of holomorphic sections.

\begin{lem}\label{lem:PhiCour}
For (local) holomorphic sections $(X,\alpha), \, (Y, \beta)$ of $\T M$, we have
$$
\Phi(\Cour{(X,\alpha), (Y,\beta)}) = \Cour{\Phi(X,\alpha), \Phi(Y,\beta)}.
$$
\end{lem}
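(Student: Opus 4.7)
The approach is a direct bilinear computation that verifies the identity component by component, using three ingredients: $r^2 = -\mathrm{id}_{TM}$ together with $\N_r = 0$; the characterization (via Example~\ref{ex:holtancot}) of holomorphic sections $(X,\alpha)$ of $\T M$ as pairs satisfying $\Lie_X r = 0$ and $D^{r,*}_Z(\alpha) = 0$ for every $Z \in \frakx(M)$; and the observation that $\partial \xi = d\xi$ on a holomorphic $(1,0)$-form $\xi$, so that the right-hand side can be expanded with ordinary Cartan calculus.

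For the tangent component, I would expand $[X^{1,0}, Y^{1,0}] = \tfrac14[X - \ii r(X), Y - \ii r(Y)]$ by bilinearity. Holomorphicity of $X$ and $Y$ gives $[X, r(Y)] = r[X,Y] = -[r(X), Y]$, and substituting these into $\N_r(X,Y) = 0$ together with $r^2 = -\mathrm{id}$ yields $[r(X), r(Y)] = -[X,Y]$. The four resulting terms collapse to $\tfrac12([X,Y] - \ii r[X,Y])$, which is exactly the tangent component of $\Phi([X,Y], \Lie_X\beta - i_Y d\alpha)$.

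For the cotangent component, set $\xi = \alpha - \ii r^*\alpha$, $\zeta = \beta - \ii r^*\beta$ and $\gamma = \Lie_X\beta - i_Y d\alpha$; the goal is $\Lie_{X^{1,0}}\zeta - i_{Y^{1,0}}\partial \xi = \gamma - \ii r^*\gamma$. Expanding bilinearly and repeatedly applying $D^{r,*}_Z(\beta) = 0$ for $Z \in \{X, r(X)\}$ and $D^{r,*}_Z(\alpha) = 0$ for $Z \in \{Y, r(Y)\}$ (using $r^2 = -\mathrm{id}$ to convert $r(r(\cdot))$ into $-(\cdot)$) produces the ``doubling'' identity $\Lie_{X^{1,0}}\zeta - i_{Y^{1,0}}\partial\xi = (\Lie_X\beta - i_Y d\alpha) - \ii(\Lie_{r(X)}\beta - i_{r(Y)}d\alpha)$. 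The two auxiliary identities $r^*(\Lie_X \beta) = \Lie_{r(X)}\beta$ and $r^*(i_Y d\alpha) = i_{r(Y)} d\alpha$ then complete the proof: the first follows from $\Lie_X r^* = 0$ (coming from $\Lie_X r = 0$) combined with $D^{r,*}_X(\beta) = 0$, and the second from the chain $r^*(i_Y d\alpha)(Z) = d\alpha(Y, r(Z)) = d(r^*\alpha)(Y, Z) = i_Y d(r^*\alpha)(Z) = i_{r(Y)}d\alpha(Z)$, where the second equality uses $D^{r,*}_Z(\alpha) = 0$ and the last uses $D^{r,*}_Y(\alpha) = 0$.

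The main obstacle will be bookkeeping: no individual step is deep, but one must match each cancellation to the correct holomorphicity condition and the correct ``probe'' vector (often of the form $r$ applied to $X$, $Y$, or $Z$), and invoking the wrong pairing produces terms that fail to cancel. Should the direct manipulation become unwieldy, a conceptual backup is to reduce to the holomorphic-coordinate generators $\partial/\partial z^i$ and $dz^i$, where both Courant brackets manifestly coincide with their coordinate formulas and the identity is transparent.
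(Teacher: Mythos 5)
Your proposal takes essentially the same route as the paper's proof: expand both sides of the identity, use the characterization of holomorphic sections $(X,\alpha)$ via $\D^r_Z(X,\alpha)=0$ to extract the commutation identities $[X,r(Z)]=r([X,Z])$, $\Lie_X r^*(\gamma)=r^*(\Lie_X\gamma)$, $\Lie_{r(Z)}\alpha=\Lie_Z r^*(\alpha)$, $i_{r(Z)}d\alpha=i_Z d(r^*\alpha)$, and use $\partial=d$ on holomorphic forms; the cotangent computation, including the ``doubling'' and the two auxiliary identities, is exactly the paper's. One sign slip in the tangent part: holomorphicity gives $[X,r(Y)]=r([X,Y])=+[r(X),Y]$ (not $-[r(X),Y]$), so the two middle terms add to $2r([X,Y])$ rather than cancelling, and then $\N_r=0$ together with $r^2=-\id$ gives $[r(X),r(Y)]=-[X,Y]$; with this correction the four terms do collapse to $\tfrac12([X,Y]-\ii\, r([X,Y]))$ as you assert, whereas the relation as literally written would make the bracket vanish.
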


\begin{proof}
Recall that $\Cour{\Phi(X,\alpha), \Phi(Y,\beta)}$ has $T^{1,0}M$-component given by
\begin{equation}\label{eq:compT}
\frac{1}{4}[X-\ii r(X),Y-\ii r(Y)]
\end{equation}
and $(T^{1,0}M)^*$-component given by
\begin{equation}\label{eq:compT*}
\frac{1}{2} (\mathcal{L}_{X-\ii r(X)} (\beta-\ii r^*\beta) - i_{Y-\ii r(Y)}\, \partial(\alpha-\ii r^* \alpha)).
\end{equation}
On the other hand,
$$
\Phi(\Cour{(X,\alpha),(Y,\beta)}) =\left(\frac{1}{2}([X,Y]- \ii r[X,Y]), \mathcal{L}_X\beta -i_Yd\alpha -\ii r^*(\mathcal{L}_X\beta -i_Yd\alpha) \right)
$$
Recall (see \eqref{eq:nablaD}) that $(X,\alpha)$ is holomorphic if and only if $\D^r_Z(X,\alpha) = 0$, for all $Z \in \frakx(M)$, and this condition implies the following identities for $(X,\alpha)$ (similarly for $(Y, \beta)$):
\begin{align*}
&[X, r(Z)] = r([X,Z]),  \qquad \Lie_X r^*(\gamma) = r^*(\Lie_X\gamma),\\
& \Lie_{r(Z)} \alpha = \Lie_Z r^*(\alpha),  \qquad \;\;\;\; i_{r(Z)} d\alpha = i_Z d(r^*(\alpha)), 
\end{align*}
for all $Z \in \frakx(M)$, and $\gamma \in \Omega^1(M)$.
It follows that \eqref{eq:compT} equals
$$
\frac{1}{4}([X,Y]-[r(X),r(Y)]- \textbf{i}([r(X),Y]+[X,r(Y)]))\\
 =  \frac{1}{2}([X,Y]- \textbf{i}r([X,Y])).
$$
Similarly, \eqref{eq:compT*} equals (recall that $d=\partial$ on holomorphic forms)
\begin{align*}
& \frac{1}{2}\left(\Lie_X\beta - \Lie_{r(X)}r^*(\beta) - {\ii}(\Lie_{r(X)}\beta + \Lie_Xr^*(\beta)) -  2(i_Y d\alpha - \ii i_{r(Y)} d\alpha)\right)\\
& = \Lie_X \beta - i_Y d\alpha - \ii r^*(\Lie_X \beta - i_Y d\alpha).
\end{align*}
\end{proof}

We now verify that holomorphic Dirac structures correspond to Dirac-Nijenhuis structures under the isomorphism \eqref{dfn:phi_iso}.

\begin{prop}\label{prop:hol_dirac}
Let $r\in \Omega^1(M,TM)$ be a complex structure on $M$ and $L \subset \T M$ be a lagrangian subbundle. The pair $(L,r)$ is compatible if and only if  $\Phi(L)$ is a holomorphic lagrangian subbundle of $\T^{1,0}M$, and $(L,r)$ is a Dirac-Nijenhuis structure if and only if $\Phi(L)$ is a holomorphic  Dirac structure.
\end{prop}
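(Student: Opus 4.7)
My plan is to prove both equivalences by translating the compatibility of $(L,r)$ into the condition that $L$ is a holomorphic sub-bundle of $\T M$ (equipped with the holomorphic structure transported from $\T^{1,0}M$ via $\Phi$), and translating Courant-involutivity of $L$ into holomorphic Courant-involutivity of $\Phi(L)$.

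First I would observe that $\Phi = \Phi_T\oplus \Phi_{T^*}$ from \eqref{eq:identifPhi} is a real vector bundle isomorphism intertwining the symmetric pairings on $\T M$ and $\T^{1,0}M$ (a direct check at the level of a vector and a covector), so $L$ is lagrangian if and only if $\Phi(L)$ is lagrangian. For the first equivalence, I would invoke Example~\ref{ex:holtancot}, which identifies the Dolbeault 1-derivation corresponding under $\Phi$ to the natural holomorphic structure on $\T^{1,0}M$ with $(\D^r,(r,r^*),r)$. By the consequence of Theorem~\ref{thm:bijection} recorded immediately after its statement, invariance of $L\subseteq \T M$ under the associated linear $(1,1)$-tensor $(r^{\tg},r^{\cotg})$ on $\T M$ is equivalent to $(r,r^*)(L)\subseteq L$ together with $\D^r_X(\Gamma(L))\subseteq \Gamma(L)$ for all $X\in \mathfrak{X}(M)$, i.e., precisely conditions (1)--(2) of Definition~\ref{defDN}. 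Since the invariance of $L$ under the complex structure pulled back from $\T^{1,0}M$ is by definition the condition for $\Phi(L)$ to be a holomorphic sub-bundle of $\T^{1,0}M$, the first equivalence follows.

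For the second equivalence, since $r$ is a complex structure one has $\N_r=0$ automatically, so the Nijenhuis hypothesis is free and ``Dirac-Nijenhuis'' reduces to ``compatible plus Courant-involutive''. Granted the first equivalence, it remains to show that $L$ is Courant-involutive if and only if $\Phi(L)$ is involutive under the holomorphic Courant bracket on $\T^{1,0}M$. Lemma~\ref{lem:PhiCour} identifies the two brackets on holomorphic sections, which reduces the question to showing that Courant-involutivity of $L$ is determined by its behavior on holomorphic sections (i.e., on $\D^r$-flat sections). For this I would use that $\Phi(L)$, being a holomorphic sub-bundle, admits local holomorphic frames; pulling them back via $\Phi$ yields smooth local frames of $L$ by $\D^r$-flat sections. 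On such a frame, Courant-bracket closure in $L$ is exactly what Lemma~\ref{lem:PhiCour} translates between the two sides. Extension to arbitrary sections then proceeds via the Leibniz rules for $\Cour{\cdot,\cdot}$, where the potentially problematic $\<\cdot,\cdot\>\,df$ term in the first-argument Leibniz rule vanishes because $L$ is lagrangian, so the remaining terms stay in $\Gamma(L)$.

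The main obstacle is this last step: the existence of local holomorphic frames for the holomorphic sub-bundle $\Phi(L)$, which is the input allowing the passage from involutivity on holomorphic generators to involutivity on all smooth sections. I would invoke this as a standard fact for holomorphic vector bundles rather than reprove it, and verify the Leibniz-rule bookkeeping that propagates bracket-closure from $\D^r$-flat frames to arbitrary $C^\infty(M)$-combinations.
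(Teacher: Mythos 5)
Your proposal is correct and follows essentially the same route as the paper: the first equivalence via the identification of the Dolbeault 1-derivation on $\T M$ with $(\D^r,(r,r^*),r)$ (Example~\ref{ex:holtancot} plus the subbundle criterion after Theorem~\ref{thm:bijection}), and the second via Lemma~\ref{lem:PhiCour} together with the observation that, on a lagrangian subbundle, Courant-involutivity is a tensorial condition (the paper packages this as the vanishing of $T(\sigma_1,\sigma_2,\sigma_3)=\<\Cour{\sigma_1,\sigma_2},\sigma_3\>$, you as the vanishing of the symmetric term in the Leibniz rule) and can therefore be tested on local holomorphic frames. The only nitpick is that $\Phi$ does not literally intertwine the two pairings — one finds $\<\Phi(\sigma_1),\Phi(\sigma_2)\>=\<\sigma_1,\sigma_2\>-\ii\<(r,r^*)\sigma_1,\sigma_2\>$ — but for an $(r,r^*)$-invariant subbundle this still yields $\Phi(L)^\perp=\Phi(L^\perp)$, which is all that is needed.
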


\begin{proof}
A real subbundle $L\subset \T M$ is a complex subbundle if and only if $(r,r^*)(L)\subseteq L$, and such a subbundle is holomorphic if and only if $\mathbb{D}^r_X(\Gamma(L))\subseteq \Gamma(L)$ for all $X\in \mathfrak{X}(M)$. Also, for a complex subbundle $L\subseteq \T M$ 
one can directly check that $\Phi(L)^{\perp} = \Phi(L^\perp)$, so $L$ is lagrangian in $\T M$ if and only if $\Phi(L)$ is lagrangian in $\T^{1,0}M$.  This proves the first assertion of the proposition.

For the second assertion, we note that the involutivity of $L$ is equivalent to the vanishing of $T \in \Gamma(\wedge^3 L^*)$ given by
$$
T(\sigma_1,\sigma_2, \sigma_3) := \<\Cour{\sigma_1,\sigma_2}, \sigma_3\>.
$$
For holomorphic sections $\sigma_1, \sigma_2, \sigma_3 \in \Gamma(L)$, we have
\begin{align*}
    \<\Cour{\Phi(\sigma_1),\Phi(\sigma_2)}, \Phi(\sigma_3)\>&= 
    \<\Phi(\Cour{\sigma_1, \sigma_2}), \Phi(\sigma_3)\>
     = T(\sigma_1, \sigma_2, \sigma_3) - \ii T(\sigma_1, \sigma_2,(r,r^*)(\sigma_3)).
\end{align*}
The condition $\Phi(L) = \Phi(L)^\perp$ implies  that the sheaf of holomorphic sections of $\Phi(L)$ is involutive if and only if $T$ vanishes on holomorphic sections of $L$, which is in turn equivalent to the vanishing of $T$
(since any smooth section $\sigma$ of $L$ can be written locally as $\sigma=\sum_i f_i \sigma_i$,  where $f_i \in {C}^{\infty}(M)$ and $\sigma_i$ is holomorphic).
\end{proof}

For a complex lagrangian subbundle $\mathcal{L}\subset \T^{1,0} M$, we define its \textit{real part} as the real lagrangian subbundle $L\subset \T M$ such that $\Phi(L) = \mathcal{L}$.

\begin{ex}\label{ex:poisson_hol}\em
Let $\Pi = \pi + \ii \pi_1 \in \Gamma(\wedge^2 TM \otimes \C)$ be a complex bivector field. Then $\Pi \in \Gamma(\wedge^{2}T^{1,0}M)$ if and only if $\pi_1^\sharp = -\pi^\sharp \circ r^*$, i.e., $\pi_1=-\pi_r$. In this case, the lagrangian subbundle $\mathcal{L}$ of $\T^{1,0}M$ defined as the graph of $\Pi^\sharp: (T^{1,0}M)^* \to T^{1,0}M$ has real part given by $L=\mathrm{graph}(4\pi^\sharp)$. By Proposition \ref{prop:hol_dirac} , we have a bijective correspondence between Poisson-Nijenhuis structures $(\pi, r)$ and holomorphic Poisson structure $\Pi = \pi - \ii \pi_r$, see \cite[Prop.~2.6]{LMX2}.
\end{ex}

\begin{ex}\label{ex:presymphol}\em 
Let $\Omega = \omega + \ii \omega_1 \in \Gamma(\wedge (TM \otimes \C)$ be a complex 2-form. Then $\Omega \in \Gamma(\wedge^2 (T^{1,0}M)^*)$ if and only if $\omega_1 = - \omega_r$, and the real part of the lagrangian subbundle $\mathcal{L}:=\mathrm{graph}(\Omega^\flat)\subset \T^{1,0}M$, where $\Omega^\flat: T^{1,0}M\to (T^{1,0}M)^*$, is given by $L=\mathrm{graph}(\omega^\flat)$.  Proposition \ref{prop:hol_dirac} thens gives a bijective correspondence between presymplectic-Nijenhuis structures $(\omega, r) $ and holomorphic closed 2-forms $\Omega = \omega - \ii \omega_r$.

\end{ex}

We will generalize Example \ref{ex:presymphol} to higher-degree forms in Proposition \ref{prop:hol_dif} below.

%%%%%%%%%
\section{Properties of Dirac-Nijenhuis structures}
\label{sec:properties}
In this section we discuss presymplectic foliations and Poisson quoients of Dirac-Nijenhuis structures, 
 as well as hierarchies that extend those of Poisson-Nijenhuis structures.

%%%%%%%%%%%%%%%%%%%%%%%%%%%%%%%%%%%%%%%%%%%%%%%%%%%%
\subsection{Backward maps and presymplectic leaves}

Let $N$ be a smooth manifold, and let $\psi: N \to M$ be a smooth map. If $L_N$, $L_M$ are Dirac structures on $N$ and $M$, respectively, recall that $\psi$ is a \textit{backward Dirac map} (see e.g. \cite{HB}) if 
$$
(L_N)_x = \{(X, (T_x\psi)^*(\beta)) \in \T_x N \,\, | \,\, (T_x\psi(X),\beta) \in (L_M)_{\psi(x)}\}.
$$
Recall that $r_N\in \Omega^1(N,TN)$ and $r_M\in \Omega^1(M,TM)$ are {\em $\psi$-related} if $r_M \circ d\psi = d\psi \circ r_N$.

\begin{prop}\label{prop:backdirac}
Suppose that $\psi:(N,L_N) \to (M, L_M)$ is a backward Dirac map
and that $r_N$ and $r_M$ are $\psi$-related.
If $L_M$ is compatible with $r_M$, then $L_N$ is compatible with $r_N$.
\end{prop}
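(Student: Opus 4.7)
The plan is to verify the two compatibility conditions of Definition 3.4 for $(L_N, r_N)$ separately, using the already-established compatibility of $(L_M, r_M)$ together with the naturality identities \eqref{eq:natural1}--\eqref{eq:natural2} of Remark~\ref{rem:naturality}.

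Condition (1), $(r_N, r_N^*)(L_N) \subseteq L_N$, is a direct pointwise check. Given $(X, \alpha) \in (L_N)_x$ with $\alpha = (T_x\psi)^*\beta$ and $(d\psi(X), \beta) \in (L_M)_{\psi(x)}$, the $\psi$-relatedness of $r_N$ and $r_M$ gives both $d\psi\circ r_N = r_M\circ d\psi$ and, by duality, $r_N^*\circ (T_x\psi)^* = (T_x\psi)^*\circ r_M^*$. Combining these with the $(r_M, r_M^*)$-invariance of $L_M$ shows that $(r_N(X), r_N^*(\alpha))$ satisfies the defining condition to lie in $(L_N)_x$.

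For condition (2), I would reformulate via Remark~\ref{rem:concom}: the inclusion $\D^{r_N}_X(\Gamma(L_N)) \subseteq \Gamma(L_N)$ for every $X \in \mathfrak{X}(N)$ is equivalent to the vanishing of the tensor $C^{r_N}_{L_N} \in \Gamma(\wedge^2 L_N^* \otimes T^*N)$. Being tensorial, $C^{r_N}_{L_N}$ is determined by its values on any local generating family for $\Gamma(L_N)$, so it suffices to test it on \emph{projectable} sections, i.e.\ pairs $(Y, \psi^*\beta)$ with $Y \sim_\psi Z$ for some $Z \in \mathfrak{X}(M)$ satisfying $(Z, \beta) \in \Gamma(L_M)$, and on vector fields $X \in \mathfrak{X}(N)$ that are $\psi$-related to some $X' \in \mathfrak{X}(M)$. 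For such data, the naturality identities give $D^{r_N}_X(Y) \sim_\psi D^{r_M}_{X'}(Z)$ and $D^{r_N,*}_X(\psi^*\beta) = \psi^* D^{r_M,*}_{X'}(\beta)$. Writing $\D^{r_M}_{X'}(Z, \beta) = (W, \gamma) \in \Gamma(L_M)$, one reads off that $\D^{r_N}_X(Y, \psi^*\beta) = (D^{r_N}_X(Y), \psi^*\gamma)$ with $d\psi\circ D^{r_N}_X(Y) = W\circ \psi$, which is exactly the defining condition for $\D^{r_N}_X(Y, \psi^*\beta)$ to lie in $\Gamma(L_N)$.

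I expect the main subtlety to be the local spanning claim: that near every $x \in N$, sections of $L_N$ can indeed be written as $C^\infty(N)$-combinations of projectable sections, and that arbitrary $X \in \mathfrak{X}(N)$ can be similarly reduced. This is painless when $\psi$ is a submersion, and can be reduced to that case and to the embedded case by factoring $\psi$ through its graph $N \hookrightarrow N \times M \to M$. Alternatively, exploiting the tensorial nature of $C^{r_N}_{L_N}$ one can argue entirely pointwise: given $(X_0, (T_x\psi)^*\beta_0) \in (L_N)_x$, first extend $(d\psi(X_0), \beta_0)$ to a local section $(Z, \beta)$ of $L_M$ near $\psi(x)$, then lift $Z$ to a vector field on $N$ taking value $X_0$ at $x$ --- fiberwise values are all that is needed. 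A conceptually cleaner route would be to phrase the whole compatibility condition as invariance of $TL_N \subset T(\T N)$ under $(r_N^{\tg}, r_N^{\cotg})$ (the Dirac analogue of Propositions~\ref{prop:tgcotPN} and \ref{prop:tancotOmN}) and to derive the statement directly from the naturality of tangent and cotangent lifts under $\psi$-related maps.
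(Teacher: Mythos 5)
Your proof follows essentially the same route as the paper's: condition (1) is checked pointwise from $\psi$-relatedness exactly as you describe, and condition (2) is reduced to the vanishing of the tensor $C^{r_N}_{L_N}$ of Remark~\ref{rem:concom}, which is then computed on sections of $L_N$ that are $\psi$-related to sections of $L_M$ using the naturality identities \eqref{eq:natural1}--\eqref{eq:natural2}. You also correctly identify the one genuine subtlety, namely whether enough $\psi$-related data exists near every point.

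Where your proposal falls short is in the resolution of that subtlety. The ``entirely pointwise'' argument does not work at every point: given $(X_0,(T_x\psi)^*\beta_0)\in (L_N)_x$, you can extend $(T_x\psi(X_0),\beta_0)$ to a local section $(Z,\beta)$ of $L_M$, but you cannot in general lift $Z$ to a vector field on $N$ that is $\psi$-related to $Z$ (this requires $Z(\psi(y))\in\mathrm{Im}(T_y\psi)$ for all nearby $y$, which fails for non-submersions), and even in the submersion/embedding factorization the embedded case has the same problem: the fibers $L_M|_{\psi(y)}\cap(T_y\psi(T_yN)\oplus T^*_{\psi(y)}M)$ can jump in dimension, so a section of $L_M$ through the given point need not remain $\psi$-compatible nearby. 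Moreover, ``fiberwise values are all that is needed'' is only true for the output of the tensor; to \emph{compute} $C^{r_N}_{L_N}$ via the naturality identities you need honest local $\psi$-related sections, not just matching values at $x$. The paper's fix is to observe that the set of points $x\in N$ at which every element of $L_N|_x$ and every tangent vector do admit such $\psi$-related local extensions is open and dense (this is the content of the proof of Prop.~5.6 in \cite{HB}); on that set the naturality computation gives $C^{r_N}_{L_N}=0$, and since $C^{r_N}_{L_N}$ is a continuous tensor it vanishes everywhere. You should replace your pointwise claim by this open-dense-plus-continuity argument (or restrict your pointwise construction to the clean points and then invoke density).
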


\begin{proof}
The fact that $r_N$ and $r_M$ are $\psi$-related directly implies that  $(r_N, r_N^*)(L_N) \subset L_N$.
To show that $L_N$ verifies condition (2) in Definition~\ref{defDN}, we can equivalently verify the vanishing of the  concomitant $C^{r_N}_{L_N} \in \Gamma(\wedge^2 L_N^* \otimes T^*N)$ of Remark~\ref{rem:concom}.
%for that, we will use the naturality properties of Remark~\ref{rem:naturality}.

We say that (local) sections $a=(X,\alpha)$ of $\T N$ and $b=(Y,\beta)$ of $\T M$ are $\psi$-related if $X$ and $Y$ are $\psi$-related, and $\alpha=\psi^*\beta$. Note that if $a\in \Gamma(\T N)$ is $\psi$-related to $b\in \Gamma(L_M)$, then it follows from the definition of backward Dirac map that $a\in \Gamma(L_N)$.  

Suppose that $a\in \Gamma(L_N)$ is $\psi$-related to $b\in \Gamma(L_M)$, and let $Z_1\in\mathfrak{X}(N)$ and $Z_2 \in \mathfrak{X}(M)$ be $\psi$-related.
It directly follows from the naturality properties \eqref{eq:natural1} and \eqref{eq:natural2} that $\D_{Z_1}^{r_N}(a)$ is $\psi$-related to $\D_{Z_2}^{r_M}(b) \in \Gamma(L_M)$ (since we are assuming that $L_M$ is $r_M$-compatible), and hence  
$\D_{Z_1}^{r_N}(a)\in \Gamma(L_N)$. Now suppose that $x\in N$ satisfies the following property: any $a_x\in L_N|_x$ and $(Z_1)_x\in TN|_x$ can be extended to local sections $a$ of $L_N$ and $Z_1$ of $TN$ which are $\psi$-related to some
local sections $b$ of $L_M$ and $Z_2$ on $TM$, respectively; in this case, $C^{r_N}_{L_N} |_x =0$. We finally observe that the points $x\in N$ with this property form an open dense subset of $N$ (see the proof of Prop.~5.6 in \cite{HB}), hence $C^{r_N}_{L_N} = 0$.
\end{proof}

On a Dirac manifold $(M, L)$, let $\psi: C \hookrightarrow M$ be a submanifold. It is known (see e.g. \cite[Sec.~5]{HB}) that if the lagrangian distribution 
\begin{equation}\label{eq:back}
\mathcal{B}_\psi(L)=\{(X,\psi^*\alpha) \in \mathbb{T}C \: | \:  T\psi(X), \alpha) \in L\} \subset TC\oplus T^*C
\end{equation} 
defined by pointwise pullback of $L$ to $C$ fits into a smooth vector bundle  (this is ensured by suitable clean-intersection conditions \cite{Cour}, see also \cite{HB}), then it defines a Dirac structure on $C$ for which $\psi$ is a backward Dirac map. 

For $r\in \Omega^1(M,TM)$, we call a submanifold $C\hookrightarrow M$ {\em $r$-invariant} if $r(TC)\subseteq TC$; we define $r_C \in \Omega^1(C,TC)$ by $r|_{TC}$.

\begin{cor}\label{cor:DNsubm}
Let $L$ be a Dirac structure on $M$ compatible with $r\in \Omega^1(M,TM)$. Let $\psi: C\hookrightarrow M$ be an $r$-invariant submanifold for which  $\mathcal{B}_\psi(L)$ in \eqref{eq:back} is smooth, hence a Dirac structure on $C$. Then $\mathcal{B}_\psi(L)$ is compatible with $r_C$.
\end{cor}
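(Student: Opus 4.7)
The plan is to deduce this corollary as a direct consequence of Proposition \ref{prop:backdirac}, so the task reduces to checking that the two hypotheses of that proposition are satisfied in the submanifold setting.

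First, by the very definition of $\mathcal{B}_\psi(L)$ in \eqref{eq:back} (and the assumption that it is smooth), the inclusion $\psi: (C,\mathcal{B}_\psi(L)) \to (M,L)$ is a backward Dirac map; this is a standard fact that is recalled in the paragraph preceding the corollary. Second, I need to verify that $r_C$ and $r$ are $\psi$-related, i.e., that $r \circ d\psi = d\psi \circ r_C$. Since $\psi$ is an inclusion and $d\psi$ is the natural inclusion $TC \hookrightarrow TM|_C$, the $r$-invariance hypothesis $r(TC)\subseteq TC$ combined with the definition $r_C := r|_{TC}$ makes this relation tautological: for $X \in T_xC$, both $r(d\psi(X))$ and $d\psi(r_C(X))$ equal $r(X)$ viewed as a vector in $T_xM$.

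With these two conditions in hand, Proposition \ref{prop:backdirac} applies directly and gives that $\mathcal{B}_\psi(L)$ is compatible with $r_C$, which is the claim. There is no real obstacle here; the content of the corollary is entirely the content of Proposition \ref{prop:backdirac} specialized to the inclusion of an $r$-invariant submanifold. The only subtlety worth mentioning is the smoothness assumption on $\mathcal{B}_\psi(L)$, which is needed to even phrase the statement (so that $\mathcal{B}_\psi(L)$ is a genuine Dirac structure on $C$), but this is built into the hypotheses.
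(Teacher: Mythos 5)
Your proposal is correct and matches the paper's (implicit) argument exactly: the corollary is stated without proof precisely because it is the specialization of Proposition \ref{prop:backdirac} to the inclusion of an $r$-invariant submanifold, with the backward-Dirac-map property of $\psi$ coming from the paragraph preceding the corollary and the $\psi$-relatedness of $r_C$ and $r$ being tautological from $r$-invariance. Nothing is missing.
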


Note that $r_C$ is Nijenhuis if $r$ is Nijenhuis. So the previous corollary illustrates how Dirac-Nijenhuis structures naturally arise on submanifolds of Dirac-Nijenhuis manifolds. %\todo[inline,size=\small]{any natural example of submanifold of a PN manifold that is DN?}

A special case is that of presymplectic leaves of a Dirac manifold $(M,L)$. Recall that $L$ defines a (singular) foliation on $M$ whose leaves are tangent to the distribution given by the projection of $L$ on $TM$; moreover, each leaf $\O$ carries a closed $2$-form $\omega_\O \in \Omega^2(\mathcal{O})$, which can be seen as  the pullback of $L$ to $\O$. Note also that the condition $(r,r^*)(L) \subset L$ implies that $T\O = \pr_T(L)$ is preserved by $r$. 

\begin{cor}
Let $L$ be a Dirac structure on $M$ compatible with $r\in \Omega^1(M,TM)$. Then each leaf $\O$ is $r$-invariant, and its presymplectic form $\omega_\O$ is compatible with the restriction $r_\O$. In particular, if $(M,L,r)$ is a Dirac-Nijenhuis manifold, then its leaves are presymplectic-Nijenhuis manifolds.
\end{cor}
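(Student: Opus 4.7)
The plan is to derive the corollary as an immediate consequence of Corollary~\ref{cor:DNsubm} applied to the inclusion of a presymplectic leaf, using Example~\ref{ex:OmegaN} to translate the compatibility of a lagrangian subbundle into the compatibility of its generating 2-form. No new computations are really required: all the ingredients are already in place.

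First, I would check $r$-invariance of a leaf $\O$. The condition $(r,r^*)(L)\subseteq L$ projects to $r(\pr_T(L))\subseteq \pr_T(L)$, and since $T\O = \pr_T(L)|_\O$ by the definition of the presymplectic foliation, this gives $r(T\O)\subseteq T\O$ (this is exactly the observation made in the paragraph preceding the statement). So the restriction $r_\O := r|_{T\O}\in \Omega^1(\O,T\O)$ is well defined.

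Next, I would apply Corollary~\ref{cor:DNsubm} to the inclusion $\psi: \O \hookrightarrow M$, which is automatic once smoothness of $\mathcal{B}_\psi(L)$ is established. But this smoothness is immediate here: by the very construction of the presymplectic leaf, $\mathcal{B}_\psi(L)$ coincides with the graph $L_{\omega_\O}$ of $\omega_\O^\flat: T\O \to T^*\O$, which is a smooth lagrangian subbundle of $\T\O$. Corollary~\ref{cor:DNsubm} then yields that $L_{\omega_\O}$ is compatible with $r_\O$, and by Example~\ref{ex:OmegaN} this is equivalent to the compatibility of the pair $(\omega_\O, r_\O)$ in the sense of Definition~\ref{def:romega}.

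Finally, for the Dirac-Nijenhuis case, I would use that the Nijenhuis torsion behaves naturally under $\psi$-related vector fields: since $r_\O$ is $\psi$-related to $r$, any two vector fields on $\O$ extend locally to $\psi$-related vector fields on $M$ whose Nijenhuis brackets agree under $d\psi$ with $\N_{r_\O}$; vanishing of $\N_r$ therefore forces $\N_{r_\O}=0$. Combined with the closedness of $\omega_\O$, this promotes the compatible pair $(\omega_\O, r_\O)$ to a presymplectic-Nijenhuis structure. I don't foresee any serious obstacle; the only slightly delicate point is to ensure that $\mathcal{B}_\psi(L)$ is smooth along an arbitrary (possibly only immersed) leaf, but this is handled automatically by identifying it with the graph of $\omega_\O^\flat$.
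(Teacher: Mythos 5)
Your proposal is correct and follows essentially the same route as the paper, which presents this corollary precisely as the special case of Corollary~\ref{cor:DNsubm} applied to a presymplectic leaf, with the $r$-invariance of $T\O=\pr_T(L)$ noted beforehand and the translation to $(\omega_\O,r_\O)$ handled exactly as in Example~\ref{ex:OmegaN}. Your identification of $\mathcal{B}_\psi(L)$ with $\mathrm{graph}(\omega_\O^\flat)$ to dispose of the smoothness hypothesis, and the observation that $r_\O$ inherits the Nijenhuis property, match the paper's (implicit) argument.
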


%*********************************

%%%%%%%%%%%%%%%%%%%%%%%%%%%%%%%%%%%
\subsection{Forward maps and Poisson quotients} 
A map $\psi: (M, L_M) \to (N,L_N)$ between Dirac manifolds is a \textit{forward Dirac map} if 
$$
(L_N)_{\psi(x)} = \{(T_x\psi(X), \beta) \in \T_{\psi(x)}N \,\, | \,\, (X, (T_x\psi)^*\beta) \in (L_M)_x\}.
$$
As in the proof of Proposition~\ref{prop:backdirac}, one can use the naturality properties of Remark~\ref{rem:naturality} to
show the following result: 

\begin{prop}\label{prop:forward_dirac}
Let $\psi: (M, L_M) \to (N, L_N)$ be a surjective submersion that is a  forward Dirac map.  Suppose that $r_M\in \Omega^1(M,TM)$ is $\psi$-related to $r_N\in \Omega^1(N,TN)$. Then $L_N$ is compatible with $r_N$ provided $L_M$ is compatible with $r_M$.
\end{prop}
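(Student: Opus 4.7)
The plan is to mirror the argument of Proposition \ref{prop:backdirac}, with the direction of the Dirac map reversed. The strategy is to check the two conditions of Definition \ref{defDN} for the pair $(L_N,r_N)$, using the forward Dirac condition together with the naturality identities \eqref{eq:natural1}--\eqref{eq:natural2} to transfer information from $(L_M,r_M)$ to $(L_N,r_N)$.

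For condition (1), fix $(Y,\beta)\in (L_N)_y$, and by surjectivity of $\psi$ pick $x\in\psi^{-1}(y)$. The forward Dirac condition provides $X\in T_xM$ with $T_x\psi(X)=Y$ and $(X,(T_x\psi)^*\beta)\in (L_M)_x$. Applying $(r_M,r_M^*)$, which preserves $L_M$ by hypothesis, and invoking the equivalent formulations $T_x\psi\circ r_M = r_N\circ T_x\psi$ and $r_M^*\circ (T_x\psi)^* = (T_x\psi)^*\circ r_N^*$ of the $\psi$-relatedness of $r_M$ and $r_N$, a second application of the forward Dirac condition yields $(r_N(Y),r_N^*\beta)\in (L_N)_y$.

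For condition (2), I would work with the concomitant $C^{r_N}_{L_N}\in\Gamma(\wedge^2 L_N^*\otimes T^*N)$ of Remark \ref{rem:concom}. The central computation is that whenever $\sigma_i'\in\Gamma(L_M)$ is $\psi$-related to $\sigma_i\in\Gamma(L_N)$ for $i=1,2$ (vector-field parts $\psi$-related, $1$-form parts identified via $\psi^*$) and $X_M\in\mathfrak{X}(M)$ is $\psi$-related to $X_N\in\mathfrak{X}(N)$, a direct calculation using \eqref{eq:natural1}, \eqref{eq:natural2}, and the identity $(\psi^*\gamma)(X_M)=\psi^*(\gamma(X_N))$ for $\gamma\in\Omega^1(N)$ gives
\[
\<\D^{r_M}_{X_M}(\sigma_1'),\,\sigma_2'\>\;=\;\psi^*\bigl(\<\D^{r_N}_{X_N}(\sigma_1),\,\sigma_2\>\bigr),
\]
i.e.\ $C^{r_M}_{L_M}(\sigma_1',\sigma_2')(X_M)=\psi^*\bigl(C^{r_N}_{L_N}(\sigma_1,\sigma_2)(X_N)\bigr)$. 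The left-hand side vanishes by the assumed compatibility of $(L_M,r_M)$, and $\psi^*$ is injective on functions (as $\psi$ is a surjective submersion), so the right-hand side vanishes as well.

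The hard part will be producing enough smooth $\psi$-related lifts to apply the preceding identity. Lifting $X_N$ to a $\psi$-related $X_M$ is routine since $\psi$ is a submersion. The subtler point is lifting a section $\sigma\in\Gamma(L_N)$, defined near $y$, to a section $\sigma'\in\Gamma(L_M)$ near some $x\in\psi^{-1}(y)$ that is $\psi$-related to $\sigma$; this relies on the clean-intersection style argument used in the proof of Proposition~5.6 of \cite{HB}, which produces smooth such lifts on a dense open subset of $M$. Since $\psi$ is open, the image of this dense open subset is dense in $N$, so $C^{r_N}_{L_N}$ vanishes on a dense open subset of $N$; the smoothness (and tensoriality) of $C^{r_N}_{L_N}$ then forces its vanishing on all of $N$.
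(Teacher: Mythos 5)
Your proposal is correct and follows essentially the same route as the paper, which simply states that the result is proved ``as in the proof of Proposition~\ref{prop:backdirac}'' using the naturality properties of Remark~\ref{rem:naturality}. Your pointwise argument for condition (1), the pullback identity $C^{r_M}_{L_M}(\sigma_1',\sigma_2')(X_M)=\psi^*\bigl(C^{r_N}_{L_N}(\sigma_1,\sigma_2)(X_N)\bigr)$ for $\psi$-related data, and the dense-open-set argument for producing smooth $\psi$-related lifts are precisely the intended adaptation of the backward-map proof to the forward setting.
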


Note that if $r_M$ is Nijenhuis, then so is $r_N$, hence the previous result illustrates how Dirac-Nijenhuis structures may arise on quotients. The following is a special case.

Recall \cite{Cour} (see also \cite{HB}) that the \textit{null distribution} of a Dirac structure $L$ on $M$ is 
$$
K:=L \cap TM \subseteq TM.
$$ 
At each point $x\in M$, $K$ agrees with the kernel of the  presymplectic form on the leaf through $x$. Whenever $K$ has constant rank, it is integrable and defines the so-called {\em null foliation} $\mathcal{K}$. If the null foliation is simple, then $Q:=M/\mathcal{K}$ inherits a Poisson structure $\pi_Q$ uniquely determined by the fact that the quotient map $M\to Q$ is a forward Dirac map. 

Given $r\in \Omega^1(M,TM)$ and a surjective submersion $\psi: M\to N$, we say that $r$ {\em descends} to $N$ if
 there exists $r_N\in \Omega^1(Q,TN)$ that is $\psi$-related to $r$, in which case $r_N$ is uniquely determined by this property.

\begin{cor}\label{cor:PNquot}
Let $L$ be a Dirac structure on $M$ whose null foliation $\mathcal{K}$ is simple, let $Q$ be the leaf space with induced Poisson structure $\pi_Q$, and suppose that $L$ is compatible with $r\in \Omega^1(M,TM)$. Then $r$ descends to $r_Q\in \Omega^1(Q,TQ)$ which is compatible with $\pi_Q$.
In particular, if $(M,L,r)$ is a Dirac-Nijenhuis manifold, its quotient $Q=M/\mathcal{K}$ is a Poisson-Nijenhuis manifold.
\end{cor}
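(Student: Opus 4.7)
The plan is to first establish that $r$ descends to a $(1,1)$-tensor $r_Q$ on the leaf space $Q$, and then to conclude compatibility with $\pi_Q$ directly from Proposition~\ref{prop:forward_dirac} applied to the quotient map $\psi:M\to Q$. The key bookkeeping device is the section $(V,0)\in \Gamma(L)$ associated to any vertical vector field $V\in \Gamma(\K)$, where $\K = L \cap TM$.

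First I would check that $r$ preserves the null distribution $\K$: for $V\in \Gamma(\K)$ the section $(V,0)$ lies in $\Gamma(L)$, so condition (1) of Definition~\ref{defDN} gives $(r(V),0)=(r,r^*)(V,0)\in \Gamma(L)$, i.e. $r(V)\in \Gamma(\K)$. Next, I would show that $r$ is $\psi$-projectable. Since $\psi$ is a surjective submersion, this amounts to proving that for every $\psi$-projectable $X\in \frakx(M)$ and every $V\in \Gamma(\K)$, the bracket $[V,r(X)]$ is again in $\Gamma(\K)$. The identity $[V,r(X)] = r([V,X]) + (\Lie_V r)(X) = r([V,X]) + D^r_X(V)$ splits this into two pieces: the first lies in $\Gamma(\K)$ because $[V,X]\in \Gamma(\K)$ (as $V$ is vertical and $X$ projectable) and $r$ preserves $\K$; for the second, compatibility condition (2) applied to $(V,0)\in \Gamma(L)$ yields
\[
\D^r_X(V,0) \,=\, (D^r_X(V),\, D^{r,*}_X(0)) \,=\, (D^r_X(V),\,0)\,\in\,\Gamma(L),
\]
so $D^r_X(V)\in \Gamma(\K)$. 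Consequently $r$ descends to a unique $r_Q\in \Omega^1(Q,TQ)$ that is $\psi$-related to $r$.

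Now, by the construction of $\pi_Q$, the submersion $\psi:(M,L)\to (Q,L_{\pi_Q})$ is a forward Dirac map. Since $r$ is $\psi$-related to $r_Q$ and $L$ is compatible with $r$, Proposition~\ref{prop:forward_dirac} gives that $L_{\pi_Q}$ is compatible with $r_Q$, which by Example~\ref{exam:pn_dn} is equivalent to the compatibility of the pair $(\pi_Q, r_Q)$ in the sense of Definition~\ref{def:rpi}. Finally, if $r$ is Nijenhuis, then the tensor $\N_{r_Q}$ is $\psi$-related to $\N_r$ (this follows from the fact that the Nijenhuis torsion is determined by brackets of $\psi$-related vector fields via \eqref{eq:NijT}); since $\N_r=0$ and $\psi$ is a surjective submersion, $\N_{r_Q}=0$, so $(Q,\pi_Q,r_Q)$ is a Poisson-Nijenhuis manifold.

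The main obstacle is the projectability step, and the crux is the observation that the second component of $\D^r_X(V,0)$ automatically vanishes, forcing $D^r_X(V)\in \Gamma(\K)$ from the single condition $\D^r_X(\Gamma(L))\subseteq \Gamma(L)$. Once this is isolated, the rest of the argument is a direct combination of Proposition~\ref{prop:forward_dirac}, Example~\ref{exam:pn_dn}, and the naturality of the Nijenhuis torsion under $\psi$-relation.
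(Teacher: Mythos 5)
Your argument is correct and follows the same overall architecture as the paper's proof: first show that $r$ descends to $Q$ by proving it preserves the null distribution and projectable vector fields, then invoke Proposition~\ref{prop:forward_dirac} (together with Example~\ref{exam:pn_dn} and the descent of the Nijenhuis torsion). The one place where you genuinely diverge is the projectability computation. The paper tests $[X,r(Y)]$ against covectors $\alpha\in\Gamma(\mathrm{Ann}(K))$ and runs a Cartan-calculus computation whose key input is that $D^{r,*}_Y$ preserves $\Gamma(\mathrm{Ann}(K))=\Gamma(\pr_{T^*}(L))$, i.e.\ it works on the cotangent side via the dual 1-derivation. You instead work on the tangent side: from $(V,0)\in\Gamma(L)$ for $V\in\Gamma(K)$ and the $\mathbb{R}$-linearity of $D^{r,*}_X$ you get $\D^r_X(V,0)=(D^r_X(V),0)\in\Gamma(L)$, hence $D^r_X(V)\in L\cap TM=K$, and the identity $[V,r(X)]=r([V,X])+D^r_X(V)$ finishes the step. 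The two computations are dual to each other (they extract the two complementary consequences of condition (2) of Definition~\ref{defDN} for the subbundles $K=L\cap TM$ and $\mathrm{Ann}(K)=\pr_{T^*}(L)$), but yours is the more economical of the two: it avoids the Cartan-calculus manipulation entirely and even yields $D^r_X(V)\in\Gamma(K)$ for \emph{all} $X\in\frakx(M)$, not just projectable ones.
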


\begin{proof}
Once we prove that $r$ descends to $Q$, the result follows directly from Proposition \ref{prop:forward_dirac}. 

The condition $(r,r^*)(L) \subset L$ implies that $r(K)\subseteq K$ and $r^*(\mathrm{Ann}(K))\subseteq \mathrm{Ann}(K)$  (since $\mathrm{Ann}(K)=\pr_{T^*}(L)$). To verify that $r$ descends to $Q$, we must check that $r$ preserves projectable vector fields, i.e., $r(\mathfrak{X}(M)_{pr})\subseteq \mathfrak{X}(M)_{pr}$,
where 
$$
\mathfrak{X}(M)_{pr}=
\{Y \in \frakx(M)\,\, | \,\, [X, Y] \in \Gamma(K), \,\, \forall X \in \Gamma(K)\}.
$$
So let $\alpha \in \Gamma(\mathrm{Ann}(K))$, $Y \in \frakx(M)_{pr}$ and $X \in \Gamma(K)$, and note that
\begin{align*}
\<\alpha, [X, r(Y)]\> & = i_X \Lie_{r(Y)}\alpha - \cancel{\Lie_{r(Y)}i_X\alpha}^{=0} = i_X(-D^{r,*}_Y(\alpha) + \Lie_Y(r^*\alpha))\\
& = -\cancel{\<D^{r,*}_Y(\alpha), X\>}^{=0} + \cancel{i_{[X,Y]}r^*(\alpha)}^{=0} + \cancel{\Lie_Y i_X r^*(\alpha)}^{=0} = 0,
%\\& = 0,
\end{align*}
where for the last equality, besides the fact that $r^*$ preserves $\mathrm{Ann}(K)$,  we used that $D^{r,*}_Y$ preserves $\Gamma(\mathrm{Ann}(K))$ (which follows from $\mathbb{D}^r_Y(\Gamma(L))\subseteq \Gamma(L)$ and $\mathrm{Ann}(K)=\pr_{T^*}(L)$). 
\end{proof}

\subsection{Traces of powers of $r$.}\label{subsec:traces}
An important property of a PN manifold $(\pi,r)$ is that 
\begin{equation}\label{dfn:traces}
    \phi_j := \frac{1}{j}\mathrm{trace}(r^j), \,\,\, j \geq 1,
\end{equation}
are functions in involution:  $\{\phi_i,\phi_j\}=0$, $\forall \, i,\,j \geq 1$, see \cite{MM}. We now present a generalization of this result for Dirac-Nijenhuis manifolds.

Recall that, for a Dirac structure $L$  on $M$, the space of \textit{admissible functions} on $M$, denoted by $C^\infty_{adm}(M)$, is the space of functions $f \in C^{\infty}(M)$ for which there exists $X_f \in \frakx(M)$ satisfying $(X_f,df) \in \Gamma(L)$. Although such $X_f$ may not be uniquely defined, there is a Poisson bracket on $C^\infty_{adm}(M)$ defined by the usual formula 
$$
\{f,g\} = dg(X_f),
$$
see \cite{Cour}. In case the the null distribution is regular, a function is admissible if and only if it is constant along the leaves of the null foliation $\mathcal{K}$. If $\mathcal{K}$ is simple, the natural identification $ C^{\infty}(M/\mathcal{K}) \cong C^{\infty}_{adm}(M)$ is an isomorphism of Poisson algebras.

\begin{prop}
Let $(L,r)$ be a Dirac-Nijenhuis manifold. If $\phi_1 = \mathrm{trace}(r)$ is admissible, then $\phi_i$ is admissible for every $i\geq 2$ and 
$$
\{\phi_i,\phi_j\}=0, \, \, \forall \, i, j \geq 1.
$$
\end{prop}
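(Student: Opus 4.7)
The plan is to mimic the classical Magri-Morosi argument, using only the algebraic compatibility $(r,r^*)(L)\subseteq L$ from Definition~\ref{defDN}(1); condition~(2) and the involutivity of $L$ play no explicit role. The core ingredient is the classical recursion
\[
d\phi_{k+1} \;=\; r^*(d\phi_k), \qquad k\geq 1,
\]
valid for any Nijenhuis operator $r$. I would verify this by writing $d\phi_k(X)=\mathrm{tr}(r^{k-1}\Lie_X r)$ (a consequence of cyclic invariance of the trace applied to $\phi_k=\tfrac{1}{k}\mathrm{tr}(r^k)$) together with the identity $\Lie_{r(X)}r = r\circ\Lie_X r$, which is just a reformulation of $\N_r=0$: then $r^*(d\phi_k)(X)=d\phi_k(r(X))=\mathrm{tr}(r^{k-1}\Lie_{r(X)}r)=\mathrm{tr}(r^k\Lie_X r)=d\phi_{k+1}(X)$.

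Next, let $X_1\in\mathfrak{X}(M)$ be a vector field witnessing the admissibility of $\phi_1$, so $(X_1,d\phi_1)\in\Gamma(L)$. Applying $(r,r^*)$ yields $(r(X_1), r^*d\phi_1)=(r(X_1), d\phi_2)\in\Gamma(L)$, so $\phi_2$ is admissible with Hamiltonian vector field $r(X_1)$; iterating, $X_i:=r^{i-1}(X_1)$ witnesses the admissibility of $\phi_i$ for every $i\geq 1$. Using these choices,
\[
\{\phi_i,\phi_j\} \;=\; \<d\phi_j, r^{i-1}(X_1)\> \;=\; \<(r^*)^{i-1} d\phi_j, X_1\> \;=\; \<d\phi_{i+j-1}, X_1\> \;=\; \{\phi_{i+j-1},\phi_1\},
\]
which is visibly symmetric in $i$ and $j$. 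Since the Poisson bracket on $C^\infty_{adm}(M)$ is skew-symmetric, the common value must vanish, giving $\{\phi_i,\phi_j\}=0$ for all $i,j\geq 1$.

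The only delicate bookkeeping is the well-definedness and skew-symmetry of the bracket invoked at the last step, but both follow directly from $L$ being lagrangian: two choices of Hamiltonian vector field for $\phi_i$ differ by a section of the null distribution $K=L\cap TM$, which is annihilated by $d\phi_j\in\mathrm{pr}_{T^*}(L)=\mathrm{Ann}(K)$; and skew-symmetry is the identity $\{f,g\}+\{g,f\}=\<(X_f,df),(X_g,dg)\>=0$. I do not expect any genuine obstacle: the argument is essentially algebraic once the Nijenhuis recursion and the $(r,r^*)$-invariance of $L$ are in hand, with the lagrangian condition cleanly handling the definitional subtleties.
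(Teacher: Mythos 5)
Your proof is correct, and for the involution step it takes a cleaner route than the paper's. The admissibility argument is identical: both proofs apply $(r,r^*)^{i-1}$ to $(X_1,d\phi_1)\in\Gamma(L)$ and invoke the recursion $d\phi_{k+1}=r^*(d\phi_k)$ (which the paper simply quotes from Magri--Morosi, whereas you sketch its derivation from $\N_r=0$ via $\Lie_{r(X)}r=r\circ\Lie_X r$ --- a correct and worthwhile addition). For the vanishing of the brackets, the paper computes $\{\phi_i,\phi_j\}$ and $\{\phi_j,\phi_i\}$ through the trace identity $\<d\,\mathrm{trace}(r),X\>=\mathrm{trace}(D^r_{(\cdot)}(X))$ and an expansion of $D^{r^j}$ in terms of $D^r$, and then uses $D^r(r(X))=r(D^r(X))$ to match the two expressions; you instead apply the recursion a second time, moving $(r^*)^{i-1}$ across the pairing to collapse both brackets to the single quantity $\<d\phi_{i+j-1},X_1\>$, which is manifestly symmetric in $i,j$. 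Both arguments then conclude identically from the skew-symmetry of the bracket on admissible functions. Your version avoids the $D^r$ machinery entirely and makes transparent that only the algebraic invariance $(r,r^*)(L)\subseteq L$, the Nijenhuis recursion, and the lagrangian condition are needed; the paper's version is closer in spirit to the $1$-derivation formalism used elsewhere in the text. One cosmetic remark: with the paper's convention $\{f,g\}=dg(X_f)$, the quantity $\<d\phi_{i+j-1},X_1\>$ is $\{\phi_1,\phi_{i+j-1}\}$ rather than $\{\phi_{i+j-1},\phi_1\}$, but this has no bearing on the argument since only the symmetry in $i$ and $j$ is used.
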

\begin{proof}
Recall the useful formula $r^*(d\phi_i) = d\phi_{i+1}$ (see \cite[Eq.~4.2]{MM}). So, if $\phi_1$ is admissible, there exists $X \in \frakx(M)$ such that $(X,d\phi_1) \in \Gamma(L)$. In this case, since $L$ and $r$ are compatible,
$$
\Gamma(L) \ni (r,r^*)^{i-1}(X, \phi_1) = (r^{i-1}(X), d\phi_i) \Longrightarrow \text{ $\phi_i$ is admissible. }
$$
Also, a simple calculation in local coordinates shows that 
$
\<d\,\mathrm{trace}(r), X\> = \mathrm{trace}(D^r_{(\cdot)}(X)).
$
Now, using that $\mathrm{trace}(AB)=\mathrm{trace}(BA)$ and
$$
D^{r^j}_X(Y) = \sum_{k=0}^j r^{j-k}(D^r_{r^k(X)}(Y)),
$$
one shows that 
\begin{align*}
\{\phi_i,\phi_j\} & = d\phi_j(r^{i-1}(X)) = \mathrm{trace}(r^j(D_\cdot^r(r^{i-1}(X)))),\\
\{\phi_j,\phi_i\} & = d\phi_i(r^{j-1}(X)) = \mathrm{trace}(r^i(D_\cdot^r(r^{j-1}(X)))).
\end{align*}
As $D^r(r(X))=r(D^r(X))$ (see \eqref{Nij_D}), one obtains that $\{\phi_i,\phi_j\}=\{\phi_j,\phi_i\}$, which can only happen if $\{\phi_i,\phi_j\}=0$.
\end{proof}

The following simple example shows that the admissibility condition on $\mathrm{trace}(r)$ is not automatic.
 Moreover, even when it holds and the Poisson-Nijenhuis quotient $Q=M/\mathcal{K}$ is well defined (see Cor.~\ref{cor:PNquot}), we will see that the relationship between $\mathrm{trace}(r)$ and the  pull-back of $\mathrm{trace}(r_Q)$ is not straightforward.

\begin{ex}\em
In $\R^2$ with coordinates $(x_1,x_2)$, consider  the distribution $F$ generated by $\frac{\partial}{\partial x_2}$. Consider $L=F \oplus \mathrm{Ann}(F)$ and 
\begin{align*}
r & = a(x_1)\frac{\partial}{\partial x_1}\otimes dx_1 + b(x_2) \frac{\partial}{\partial x_2} \otimes dx_2\\
\widetilde{r} & = a(x_1)\frac{\partial}{\partial x_1}\otimes dx_1 + c(x_1) \frac{\partial}{\partial x_2} \otimes dx_2
\end{align*}
for $a,b,c: \R \to \R$ smooth functions. One can check that $(L,r)$ is Dirac-Nijenhuis, but $\mathrm{trace}(r)$ is admissible if and only if $b$ is a constant function. 

On the other, the pair $(L,\widetilde{r})$ is compatible, $\mathrm{trace}(\widetilde{r})$ is admissible, but $\widetilde{r}$ is Nijenhuis if and only if 
\begin{equation}\label{eq:Nij_2dim}
(a-c) \frac{dc}{dx_1} =0.
\end{equation}
Then the relationship between $\mathrm{trace}(\widetilde{r})=a+c$ and $\mathrm{trace}(\widetilde{r}_Q)=a$ is  that    
$$
\mathrm{trace}(\widetilde{r}) = 2 \mathrm{trace}(\widetilde{r}_Q) \;\; \mbox{ or }\;\; \mathrm{trace}(\widetilde{r}) =\mathrm{trace}(\widetilde{r}_Q) + \lambda, 
$$
for $\lambda \in \R$, depending on whether $a=c$ or $dc/dx_1 = 0$, respectively. But it is possible to have a mixture of these situations if there exist points $\bar{x}_1$ where $d^na/dx_1^n= 0$ for all $n \geq 1$. In this case, $c$ defined as $a$ on $(-\infty, \bar{x}_1)$ and as $\lambda=a(\bar{x}_1)$ on $[\bar{x}_1, +\infty)$ is smooth and satisfies \eqref{eq:Nij_2dim}.
\end{ex}

%%%%%%%%%%%%%%%%%%%%%%%%%%%%%%%%%%%%%%%%%%%%%%%
\subsection{Hierarchies of Dirac-Nijenhuis structures}\label{subsec:hierarchy}
A fundamental aspect in the theory of Poisson-Nijenhuis structures  $(\pi, r)$ is the existence of a hierarchy of Poisson-Nijenhuis structures $(\pi_n,r)$, $n=1,2,\ldots$, recursively defined by
\begin{align}
\label{eq:pi_recursion}  \pi_0:=\pi, \qquad &  \pi_{n}^\sharp=r \circ \pi_{n-1}^\sharp = r^n\circ\pi^\sharp,
\end{align}
see e.g. \cite{MM,KS-Magri90}.
This hierarchy is a key ingredient in the construction of integrals for bihamiltonian systems admitting a Poisson-Nijenhuis description. Similarly, a presymplectic-Nijenhuis structure $(\omega,r)$ gives rise to a hierarchy of presymplectic-Nijenhuis structures $(\omega_n,r)$, $n=1,2,\ldots$, where
\begin{equation}\label{eq:omegahier}
    \omega_n^\flat = \omega^\flat \circ r^n.
\end{equation}
In particular, symplectic-Nijenhuis structures give rise to two hierarchies, one of Poisson structures, and the other of presymplectic forms. We now see how to extend these hierarchies to Dirac-Nijenhuis structures.

Let $(L,r)$ be a Dirac-Nijenhuis structure on $M$. For each $n \in \mathbb{N}$, let 
\begin{equation}
\label{eq:hierarchy}
L_{(n,0)}:=(r^n, \mathrm{id}_{T^*M})(L),\qquad L_{(0,n)}:=(\mathrm{id}_{TM},(r^*)^n)(L).
\end{equation}

\begin{prop}\label{prop:DNhier}  
(1) If $\ker(r,\mathrm{id}_{T^*M})|_{L}=0$, then $(L_{(n,0)}, r)$ is a Dirac-Nijenhuis structure for all $n\in \mathbb{N}$; (2) if $ \ker(\mathrm{id}_{TM}, r^*)|_L =0$, then $(L_{(0,n)},r)$ is a Dirac-Nijenhuis structure for all $n\in \mathbb{N}$.
\end{prop}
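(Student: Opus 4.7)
Both (1) and (2) are handled by parallel arguments; I sketch (1) and indicate how (2) follows. Since $L_{(n,0)} = (r,\mathrm{id})(L_{(n-1,0)})$, it suffices by induction on $n$ to prove the statement for $n=1$, provided one checks that the kernel hypothesis propagates: if $(Y,0)\in L$ with $r^2Y=0$, then $(rY,0)=(r,r^*)(Y,0)\in L$ by $(r,r^*)(L)\subseteq L$, so $rY=0$ by the hypothesis, hence $Y=0$; this gives $\ker(r,\mathrm{id})|_{L'}=0$ for $L':=(r,\mathrm{id})(L)$.

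Since $(r,\mathrm{id})|_L$ is fiberwise injective, $L'$ is a smooth subbundle of rank $\dim M$. Isotropy follows from
\[
\<(rY_1,\alpha_1),(rY_2,\alpha_2)\> = \<(r,r^*)(Y_1,\alpha_1),(Y_2,\alpha_2)\>=0,
\]
so $L'$ is lagrangian. Compatibility of $L'$ with $r$: the algebraic condition is $(r,r^*)(rY,\alpha)=(r,\mathrm{id})(rY,r^*\alpha)\in L'$; the derivation condition uses the Nijenhuis identities \eqref{Nij_D} and \eqref{eq:dualNij}, which give $r\circ D^r_X=D^r_X\circ r$, so $\D^r_X(rY,\alpha)=(r,\mathrm{id})\D^r_X(Y,\alpha)\in\Gamma(L')$.

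The main obstacle is Courant-involutivity of $L'$. For $(Y_i,\alpha_i)\in\Gamma(L)$, the element
\[
\xi := -\D^r_{Y_1}(Y_2,\alpha_2) + \D^r_{Y_2}(Y_1,\alpha_1) + (r,r^*)\Cour{(Y_1,\alpha_1),(Y_2,\alpha_2)}
\]
lies in $\Gamma(L)$ by involutivity and $r$-compatibility of $L$. Expanding $\Cour{(rY_1,\alpha_1),(rY_2,\alpha_2)}$ via the Nijenhuis identity $[rY_1,rY_2]=r(-D^r_{Y_1}(Y_2)+D^r_{Y_2}(Y_1)+r[Y_1,Y_2])$ and the defining formulas for $D^r$, $D^{r,*}$, one checks that $(r,\mathrm{id})(\xi)$ matches $\Cour{(rY_1,\alpha_1),(rY_2,\alpha_2)}$ in the $TM$-component, while the $T^*M$-components differ by
\[
\eta := \mathcal{L}_{Y_1}r^*\alpha_2 - r^*\mathcal{L}_{Y_1}\alpha_2 - i_{Y_2}d(r^*\alpha_1) + r^*(i_{Y_2}d\alpha_1).
\]
The decisive step is $\eta=0$: pairing with $X\in\frakx(M)$ and using standard Cartan-calculus manipulations reduces $\eta(X)$ to $-\<\D^r_X(Y_2,\alpha_2),(Y_1,\alpha_1)\>$, which vanishes because $\D^r_X(Y_2,\alpha_2)\in\Gamma(L)$ by $r$-compatibility, $(Y_1,\alpha_1)\in\Gamma(L)$, and $L$ is lagrangian. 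Hence $\Cour{(rY_1,\alpha_1),(rY_2,\alpha_2)}=(r,\mathrm{id})(\xi)\in\Gamma(L')$, proving involutivity.

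For statement (2), the same identity $\eta=0$ implies
\[
\Cour{(Y_1,r^*\alpha_1),(Y_2,r^*\alpha_2)}=(\mathrm{id},r^*)\Cour{(Y_1,\alpha_1),(Y_2,\alpha_2)}\in\Gamma(L_{(0,1)});
\]
the remaining steps (kernel propagation, smooth subbundle, lagrangian property, and compatibility with $r$) run in complete analogy with those of (1), swapping the roles of $(r,\mathrm{id})$ and $(\mathrm{id},r^*)$.
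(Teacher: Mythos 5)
Your argument is correct and follows essentially the same route as the paper: induction via $L_{(n,0)}=(r,\mathrm{id})(L_{(n-1,0)})$ with propagation of the kernel hypothesis, isotropy from $\<(r,\mathrm{id})\sigma_1,(r,\mathrm{id})\sigma_2\>=\<(r,r^*)\sigma_1,\sigma_2\>$, compatibility from \eqref{Nij_D}, and involutivity resting on the same three ingredients ($(r,r^*)$-invariance, $\D^r$-invariance, and $\N_r=0$). The only organizational difference is in the involutivity step, where you exhibit $\Cour{(rY_1,\alpha_1),(rY_2,\alpha_2)}$ directly as $(r,\mathrm{id})(\xi)$ for an explicit $\xi\in\Gamma(L)$ (your identity $\eta=0$ being exactly the vanishing of the concomitant $C^r_L$), whereas the paper pairs the bracket against a third section of $L_{(1,0)}$ and invokes the lagrangian condition; your version has the small bonus of yielding the involutivity of $L_{(0,1)}$ for free, as in Remark~\ref{rem:involutive}.
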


\begin{proof}
We will prove (1); the proof of (2) is similar.

Suppose that $\ker(r,\mathrm{id}_{T^*M})|_L=0$. In this case, $L_{(1,0)}$ is a smooth subbundle of $\mathbb{T}M$ with the same rank as $L$. We will prove that $L_{(1,0)}$ is isotropic (hence lagrangian) and involutive with respect the Courant bracket. The result for a general $L_{(n,0)}$ will follow by induction. 

To prove that $L_{(1,0)}$ is isotropic, note that, for
 $(X,\alpha), (Y,\beta) \in \Gamma(L)$,
   $$
   \< (r(X),\alpha), (r(Y),\beta)\> = \<(X,\alpha), (r, r^*)(Y,\beta)\>=0
   $$
since $(r,r^*)(L) \subset L$.  To prove that involutivity of $L_{(1,0)}$,  we will show that
\begin{equation*}
    \<\Cour{(r(X),\alpha),(r(Y),\beta)}, (r(Z),\gamma)\>=0 
\end{equation*}
for any $(X,\alpha),(Y,\beta),(Z,\gamma) \in \Gamma(L) $. From the definitions of $D^r$ and $D^{r,*}$, see \eqref{eq:Dr} and \eqref{eq:D*}, one can check that 
\begin{align}
\label{eq:involutivity}\<\Cour{(r(X),\alpha),(r(Y),\beta)}, (r(Z),\gamma)\>  
  & = \<\,\Cour{(r(X),r^*(\alpha)), (Y,\beta)}, (r(Z), r^*(\gamma))\,\>\\
\nonumber   & \hspace{-70pt} + \<\D^r_Y(X,\alpha), (r(Z),r^*(\gamma))\> + \<\gamma, \N_r(X,Y)\>.
\end{align}
Since $(r,r^*)(L) \subset L$, $\D^r_Y(\Gamma(L)) \subset \Gamma(L)$ and $\N_r = 0$, we conclude that $L_{(1,0)}$ is a Dirac structure on $M$. The fact that $L_{(1,0)}$ and $r$ are compatible is a direct consequence of \eqref{Nij_D}. For the inductive step, notice that, if $L_{(n-1,0)}$ is a Dirac structure, then
$$
\ker(r^n,\mathrm{id}_{T^*M})|_L = \ker(r, \mathrm{id}_{T^*M})|_{L_{(n-1,0)}} \,\,\, \text{ and } L_{(n,0)} = (r, \mathrm{id}_{T^*M})(L_{(n-1,0)}).
$$
\end{proof}

\begin{rmk}\label{rem:involutive}\em
For a Dirac structure $L$ compatible with a (1,1)-tensor field $r$, it follows from \eqref{eq:involutivity} that a weaker condition on $\N_r$ is sufficient to ensure the involutivity of $L_{(1,0)}$. Indeed, one only needs that 
$$
\N_r(X,Y) \in L \cap TM, \;\;\; \forall \, X, Y \in \pr_T(L).
$$
(In other words., $(L, r, 0)$ is Dirac-quasi Nijenhuis, see Remark~\ref{rem:quasi}). On the other hand, $L_{(0,1)}$ is always involutive as a consequence of the identity
$$
\<\Cour{(X,r^*(\alpha)),(Y,r^*(\beta))}, (Z,r^*(\gamma))\> = \<\Cour{(X,\alpha), (Y,\beta)}, (r(Z), r^*(\gamma))\> + \<\D^r_Z(X,\alpha), (Y,\beta)\>.
$$
\hfill $\diamond$
\end{rmk}

\begin{rmk}\label{rem:invert}\em
The conditions appearing in Proposition~\ref{prop:DNhier} admit the following natural interpretations.
For a Dirac-Nijenhuis structure $(L,r)$, recall that $r$ preserves $K=L\cap TM$ and $r^*$ preserves $L \cap T^*M$. One can check that
\begin{itemize}
\item[(a)] $\ker(r, \mathrm{id}_{T^*M})|_L = 0  \, \Leftrightarrow\, r|_{K}: K \to K \text{ is invertible}$,
\item[(b)] $\ker(\mathrm{id}_{TM}, r^*)|_L = 0  \, \Leftrightarrow\, r^*|_{L\cap T^*M}: L \cap T^*M \to L\cap T^*M \text{ is invertible}.$
\end{itemize}
Since the leaves $\O$ of $L$ are $r$-invariant,
we have an induced map $\bar{r}: TM/T\O \to TM/T\O$.
Hence (b) above is also equivalent to the invertibility of $\bar{r}$ (since $r^*|_{L\cap T^*M}=\bar{r}^*$ with the identification
$L\cap T^*M = \mathrm{Ann}(T\O) = (TM/T\O)^*$).
\hfill $\diamond$
\end{rmk}

In the context of Proposition~\ref{prop:DNhier}, the following properties of $L_{(n,0)}$ and $L_{(0,n)}$ are simple to check.

\begin{prop}\label{prop:hierprop}
\begin{itemize}
    \item[(1)] The Dirac structures $L_{(n,0)}$ have the same null distribution as $L$. If the null foliation is simple and $Q$ is the leaf space, then 
\begin{equation}\label{eq:poisrel}
(\pi_{Q})_n^\sharp = r_Q^n \circ \pi_Q^\sharp,
\end{equation} 
where $(\pi_{Q})_n$ is the Poisson structure on $Q$ corresponding to $L_{(n,0)}$.
     \item[(2)]  The Dirac structures $L_{(0,n)}$ have the same leaves as $L$, and we have the relation
\begin{equation}\label{eq:presymrel}
(\omega_{\O})_n^\flat = \omega_\O^\flat \circ r_\O^n,
\end{equation}
where $(\omega_\O)_n$ is the presymplectic form  defined by $L_{(0,n)}$ on a leaf $\O$.
\end{itemize}
\end{prop}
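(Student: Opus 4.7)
The plan is to verify both parts directly from the definitions in \eqref{eq:hierarchy}, using the compatibility $(r,r^*)(L)\subseteq L$ and Corollary~\ref{cor:PNquot} as the main ingredients.

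For part (1), I first identify the null distribution of $L_{(n,0)}=(r^n,\id_{T^*M})(L)$. An element $(X,0)\in L_{(n,0)}$ arises from $(Y,0)\in L$ with $X=r^n(Y)$, so $L_{(n,0)}\cap TM = r^n(K)$ where $K=L\cap TM$. The hypothesis $\ker(r,\id_{T^*M})|_L=0$ is, by Remark~\ref{rem:invert}(a), exactly the invertibility of $r|_K$ as an automorphism of $K$; hence $r^n(K)=K$, and $L$ and $L_{(n,0)}$ share the same null distribution, null foliation, and quotient $Q$. For the Poisson identity, let $\psi:M\to Q$ denote the quotient projection, which is a forward Dirac map for both $L\to\pi_Q$ and $L_{(n,0)}\to(\pi_Q)_n$ and, by Corollary~\ref{cor:PNquot}, also relates $r$ to $r_Q$. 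Given $\beta\in T^*_qQ$ with $q=\psi(x)$ and choosing $(X',\psi^*\beta)\in L|_x$, I then have $(r^n(X'),\psi^*\beta)\in L_{(n,0)}|_x$, whence
$$
(\pi_Q)_n^\sharp(\beta)=T\psi(r^n(X'))=r_Q^n(T\psi(X'))=r_Q^n(\pi_Q^\sharp(\beta)).
$$

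For part (2), the equality $\pr_T(L_{(0,n)})=\pr_T(L)$ is immediate from the definition $L_{(0,n)}=(\id_{TM},(r^*)^n)(L)$, so the two Dirac structures carve out the same leaves. Fixing a leaf $\O$ and $X\in T_x\O$, I choose $\alpha$ with $(X,\alpha)\in L$; the compatibility of $L$ with $r$ gives $(r^n(X),(r^*)^n\alpha)=(r,r^*)^n(X,\alpha)\in L$. Reading the pair $(X,(r^*)^n\alpha)\in L_{(0,n)}$ yields $(\omega_\O)_n^\flat(X)=((r^*)^n\alpha)|_{T_x\O}$, while reading the pair $(r^n(X),(r^*)^n\alpha)\in L$ shows this same covector equals $\omega_\O^\flat(r^n(X))=\omega_\O^\flat(r_\O^n(X))$, yielding \eqref{eq:presymrel}.

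The main potential obstacle is well-definedness of the restricted covectors: different choices of $\alpha$ (or of $(r^*)^n\alpha$) with the stated membership in $L$ (or $L_{(0,n)}$) differ by an element of $L\cap T^*M=\Ann(T\O)$, which is preserved by $r^*$ since $T\O$ is $r$-invariant, and hence restricts to zero on $T\O$ after any power of $r^*$. With this subtlety addressed, the rest is bookkeeping on top of the compatibility $(r,r^*)(L)\subseteq L$ and the descent result of Corollary~\ref{cor:PNquot}.
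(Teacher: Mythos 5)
Your proof is correct and is exactly the direct verification the paper has in mind (the paper omits the argument entirely, stating only that these properties are ``simple to check'' in the context of Proposition~\ref{prop:DNhier}). You correctly identify the two points that need care --- that $r^n(K)=K$ via Remark~\ref{rem:invert}(a), and that the ambiguity in the choice of $\alpha$ lies in $L\cap T^*M=\mathrm{Ann}(T\O)$, which is $r^*$-invariant and vanishes on $T\O$ --- so nothing is missing.
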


\begin{ex}[Poisson hierarchies]\em \label{exPN} Let $(M, \pi, r)$ be a Poisson-Nijenhuis manifold, and let $L=\mathrm{graph}(\pi^{\sharp})$. Since $L\cap TM=0$, the hierarchy $L_{(n,0)}$ is always well defined and is given by 
$$
L_{(n,0)} = \mathrm{graph}(\pi_n^{\sharp}),
$$
where the Poisson structure $\pi_n$ is defined by \eqref{eq:pi_recursion}. Regarding the other hierarchy, the condition $\ker(\mathrm{id}_{TM}, (r^*)^n)|_L = 0$ is equivalent to $\ker(\pi^\sharp) \cap \ker((r^*)^n) = 0$. If this holds, as we saw in Prop.~\ref{prop:hierprop}, part (2), $L_{(0,n)}$ is a Dirac structure whose leaves $\mathcal{O}$ agree with the leaves of $\pi$, and the leafwise presymplectic form $(\omega_{\O})_n$ is related to the symplectic form $\omega_\O$ on $\O$ corresponding to $\pi$ by \eqref{eq:presymrel}. Notice that $L_{(0,n)}$ is the graph of a Poisson structure if and only if each $r_\O$ is invertible, which is equivalent to $r$ being invertible by Remark~\ref{rem:invert} (see also Remark~\ref{rem:invertible}).
\end{ex}

The next example shows how gauge transformations of Poisson structures \cite{SW} may be seen as elements of a hierarchy.

\begin{ex}[Gauge transformations]\label{ex:gauge} \em
Let $(M,\pi)$ be a Poisson manifold and $B \in \Omega^2(M)$. Define $r:= \id + \pi^\sharp \circ B^\flat: TM \to TM$, 
%by $r(X)=X+\pi^\sharp(i_XB)$. 
noticing that $\pi^\sharp \circ r^* = r \circ \pi^\sharp$. Also,  as shown in \cite{MM} (see equation B.3.9 therein), 
$$
R_\pi^r(X, \alpha) = \pi^\sharp ( i_Xi_{\pi^\sharp(\alpha)}dB).
$$
By assuming that $B$ is closed, one then has that the pair $(\pi, r)$ is compatible and (cf. Remark~\ref{rem:involutive})
$$
L_{(0,1)} = \{(\pi^\sharp(\alpha), \alpha + i_{\pi^\sharp(\alpha)}B),\,\; \, \alpha \in T^*M\}
$$
is a Dirac structure. The Dirac structure $L_{(0,1)}$ is known as the \textbf{gauge transformation} of $\pi$ by $B$. 
%Notice that the vanishing of $R_\pi^r$ is equivalent to $L_{(0,1)}$ being involutive. 
Note that $L_{(0,1)}$ is again the graph of a Poisson structure if and only if $r$ is invertible. 
The vanishing of the Nijenhuis torsion of $r$, making the pair $(\pi,r)$ into a Poisson-Nijenhuis structure, is equivalent to the 2-form defined by $B^\flat \circ \pi^\sharp \circ B^\flat: TM \to T^*M$ being closed, which is the same as saying that the pair  $(B, r)$ is presymplectic-Nijenhuis \cite{MM}. 
\end{ex}

\begin{ex}[Presymplectic hierarchies] \em \label{exhierpres}
Let $(M,\omega,r)$ be a presymplectic-Nijenhuis manifold, and let  $L= \mathrm{graph}(\omega^\flat)$. Since $L\cap T^*M =0$, the hierarchy $L_{(0,n)}$ is always well defined and
$$
L_{(0,n)}= \mathrm{graph}(\omega^{\flat} \circ r^n).
$$
As for the other hierarchy, the condition $\ker(r^n, \mathrm{id}_{T^*M})|_L=0$ is equivalent to $\ker(r^n) \cap \ker(\omega^\flat) = 0$. Following Prop.~\ref{prop:hierprop} (1), in this case $L_{(n,0)}$ is a Dirac structure whose null distribution coincides the kernel $K$ of $\omega$.
When $K$ is tangent to a simple foliation with leaf space $Q$, the Poisson structure $(\pi_{Q})_n$ on $Q$ corresponding to $L_{(n,0)}$ satisfies \eqref{eq:poisrel}.
\end{ex}

\begin{rmk}\label{rem:invertible}
{\em   For a Dirac-Nijenhuis structure $(L,r)$ on $M$ with invertible $r: TM\to TM$, both hierarchies $L_{(n,0)}$ and $L_{(0,n)}$ are well defined. In this case one can verify that $(L,r^{-1})$ is also a Dirac-Nijenhuis structure using the relation
$$
D^{r^{-1}}_X(Y) = r^{-1}(D_{r^{-1}(X)}^r(Y)).
$$
Moreover, the ``$(0,n)$'' hierachy of $(L,r)$ coincides with the ``$(n,0)$'' hierarchy of $(L,r^{-1})$. }
\hfill $\diamond$
\end{rmk}

The next example describes the hierarchy associated to a holomorphic Dirac structure. It will be convenient to define the operation on Dirac structures, $L \mapsto L^{-}$, given by 
$$
L^{-} = (-\mathrm{id}_{TM}, \mathrm{id}_{T^*M})(L).
$$

\begin{ex}[Holomorphic hierarchy]\em
Let $(M,r)$ be a complex manifold. Let $\mathcal{L}  \subset \T^{1,0}M$ be a holomorphic Dirac structure, with corresponding Dirac-Nijenhuis structure $(L,r)$, see Prop.~\ref{prop:hol_dirac}. Recall that $L$ is referred to as the real part of $\mathcal{L}$.
One can check that
$$
\xymatrix{L \ar@/^/[r]^{(r,\mathrm{id})} 
& L_{(1,0)} \ar@/^/[l]^{(\mathrm{id},r^*)} \ar@/^/[r]^{(r,\mathrm{id})} 
& L^- \ar@/^/[l]^{(\mathrm{id},r^*)} \ar@/^/[r]^{(r,\mathrm{id})} 
& L_{(1,0)}^- \ar@/^/[l]^{(\mathrm{id},r^*)} \ar@/^/[r]^{(r,\mathrm{id})} 
& L \ar@/^/[l]^{(\mathrm{id},r^*)}
}
$$
We define the {\em imaginary part} of $\mathcal{L}$ as $L_{(0,1)}=L_{(1,0)}^-$. When $\mathcal{L}= \mathrm{graph}(\Pi)$ (resp. $\mathrm{graph}(\Omega)$) for a holomorphic Poisson structure $\Pi=\pi - \ii \pi_r$ (resp. holomorphic closed 2-form $\Omega= \omega - \ii \omega_r$), the imaginary part of $\mathcal{L}$ is $\mathrm{graph}(-4\pi_r)$ (resp. $\mathrm{graph}(-\omega_r)$), see Examples \ref{ex:poisson_hol} and \ref{ex:presymphol}.
\end{ex}

A central fact concerning the Poisson structures $\pi_n$, $n=1,2,\ldots$, in the hierarchy \eqref{eq:pi_recursion} of a Poisson-Nijenhuis structure $(\pi,r)$ is that, for any $i$, $j$ , $\pi_i+\pi_j$ is a Poisson structure; equivalently,  $[\pi_i,\pi_j]=0$, where $[\cdot,\cdot]$ is the Schouten bracket. We briefly explain how this compatibility property can be generalized to Dirac-Nijenhuis hierarchies.

Given Dirac structures $L_1$ and $L_2$ on a manifold $M$, following \cite{Frejlich} we define their ``cotangential product'' as
$$
L_1\circledast L_2 =\{(X_1+ X_2,\alpha)\,|\, (X_1,\alpha)\in L_1,\, (X_2,\alpha) \in L_2  \}.
$$
In general, this product results in a (possibly non-smooth) lagrangian distribution in $\T M$; we say that the Dirac structures {\em concur} if $L_1\circledast L_2$ is a smooth involutive subbundle, i.e., a Dirac structure. For Poisson structures $\pi_1$ and $\pi_2$, it is a direct verification that $L_i=\mathrm{graph}(\pi_i)$, $i=1,2$, concur if and only if $\pi_1+\pi_2$ is a Poisson structure.

For a Dirac-Nijenhuis structure $(L,r)$ on $M$, consider its hierarchy $L_{(n,0)}$, $n=1,2,\ldots$, as in Prop.~\ref{prop:DNhier}, part (1). The fact that all these Dirac structures have the same null distribution $K$ (and hence the same projections on $T^*M$, which are $\mathrm{Ann}(K)$) ensures that their pairwise cotangential products are smooth lagrangian subbundles of $\T M$. Moreover, using a local version of Prop.~\ref{prop:hierprop}, part (1), around  points where $\mathrm{Ann}(K)$ has locally constant rank (which form an open dense subset of $M$) and the fact that the quotient Poisson structures commute, one can verify that any pair $L_{(n,0)}$ and $L_{(m,0)}$ concurs.

%Nijenhuis relations and Dirac pairs \cite{ID} (see also \cite{Kos}), that we now briefly recall. 

%%%%%%%%%%%%%%%%%%%%%%%%%%%%%%%%%%%%%%%%%%%%%%%
\section{Compatibility of multiplicative forms and $(1,1)$-tensor fields}\label{sec:multforms}

In this section we extend the compatibility in Def.~\ref{def:romega} to higher-degree forms.
We refine the infinitesimal description of multiplicative forms in \cite{HA} to include their compatibility with $(1,1)$-tensor fields (Thm.~\ref{thm:compatibility_G}), leading to a  holomorphic version of \cite[Thm.~4.6]{HA} (Thm.~\ref{thm:hol_dif}).

%%%%%%%%%%%%%%%%%%%%%%%%%%%%%%%%%%%%%%%%%%%%
\subsection{Review of multiplicative structures}\label{subsec:multiplicative}
We begin with a brief review of multiplicative differential forms and $(1,1)$-tensor fields on Lie groupoids and their infinitesimal descriptions, following \cite{HA,HT}.

Let $\mathcal{G}\rightrightarrows M$ be a Lie groupoid. We denote its source and targent maps by
$s , t : \G \to M$, and its multiplication map by $m: \G^{(2)} \to \G$, where $\G^{(2)}$ is the fiber product $\mathcal{G}\ttimes \mathcal{G}$. We will often identify $M$ with its image under the unit map. The Lie algebroid of $\G$ is $A_\G:=\ker(Ts)|_M$, with anchor $Tt|_{A_\G}$ and bracket defined by the identification of $\Gamma(A_\G)$ with right-invariant vector fields  on $\G$.

A differential $p$-form $\omega \in \Omega^p(\mathcal{G})$ is called \textit{multiplicative} if 
$$
m^* \omega= \pr_1^*\omega + \pr_2^*\omega,
$$ 
where $\pr_i: \mathcal{G}^{(2)} \rightarrow \mathcal{G}$ is the natural projection, for $i=1,2$. These objects have infinitesimal counterparts given as follows. An {\em IM p-form} on a Lie algebroid $A\to M$ is defined by a pair $(\mu,\nu)$, where 
$$
\mu:A \rightarrow \wedge^{p-1}T^*M, \qquad \nu:A \rightarrow \wedge^p T^*M
$$ 
are vector-bundle maps (over the identity on $M$)
%\footnote{All vector-bundle maps are assumed to cover the identity on $M$ unless otherwise stated.} 
satisfying the so-called \textit{IM equations}:
\begin{equation} \label{eqmubr}
\begin{cases}
     \mu([a,b])  =  \mathcal{L}_{\rho(a)}\mu(b) - i_{\rho(b)}(d\mu(a) + \nu(a))\\ 
\nu([a,b]) =  \mathcal{L}_{\rho(a)}\nu(b) - i_{\rho(b)}d\nu(a)\\
     i_{\rho(a)}\mu(b)  =  -i_{\rho(b)}\mu(a)
\end{cases}
\end{equation}
for all $a,b \in \Gamma(A)$. Any multiplicative $p$-form $\omega$ on a Lie groupoid $\G$ gives rise to an IM $p$-form $(\mu,\nu)$ on the corresponding Lie algebroid $A =  A_\G$ via
\begin{equation}\label{eq:im_form}
    i_{\overrightarrow{a}}\,\omega = t^* \mu(a), \,\;\;\; \;\; i_{\overrightarrow{a}}\,d\omega = t^*\nu(a).
\end{equation}
Here $\overrightarrow{a}$ denotes the right-invariant vector field on $\G$ corresponding to $a \in \Gamma(A)$.
When $\mathcal{G}$ is source 1-connected, \eqref{eq:im_form} defines a one-to-one correspondence between multiplicative $p$-forms on $\G$ and IM $p$-forms on $A$, see \cite{HA}.
Under this bijection, closed multiplicative forms correspond to IM-forms with $\nu=0$, that we will also refer to as {\em closed}.

%%%%%%%%%%%%%%%%%%%%%%
A $(1,1)$-tensor field $K \in \Gamma(T^*\mathcal{G} \otimes T\mathcal{G})$ is called \textit{multiplicative} if there is a vector bundle map $r:TM \rightarrow TM$ such that
\begin{center}
  \begin{tikzcd}
 
T\mathcal{G} \ar[r, "K"] \ar[d, xshift=0.7ex] \ar[d, xshift=-0.7ex] & T\mathcal{G} \ar[d, xshift=0.7ex] \ar[d, xshift=-0.7ex]  \\
TM \ar[r,"r"] & TM

\end{tikzcd}
\end{center}
is a groupoid morphism. Their infinitesimal counterparts are described as follows \cite[Sec.~6]{HT}. On a Lie algebroid $A$, an {\em IM $(1,1)$-tensor} is a triple 
 $(D,l,r)$, where $l:A \rightarrow A$ and $r: TM \to TM$ are  vector-bundle maps (over the identity on $M$) and $D:\Gamma(A) \rightarrow \Gamma(T^*M \otimes A)$ is a $\mathbb{R}$-linear map satisfying
the Leibniz-type rule
$$
D(fa)=fD(a) + df \wedge l(a) - a \wedge r^*(df), \quad \mbox{ for }\, f \in {C}^{\infty}(M), a \in \Gamma(A),
$$
 and such that the following {\em IM-equations} hold:
\begin{equation}\label{eq:IM_1_1}
\left\{
\begin{array}{rl}   
r \circ \rho \!\! & = \rho \circ l,\\
\rho(D_X(a)) \!\! & = D_X^r(\rho(a)),\vspace{2pt}\\
l([a,b]) \!\! &  = [a,l(b)] - D_{\rho(b)}(a) \vspace{2pt}\\
D_X([a,b]) \!\! &  = [a,D_X(b)] - [b,D_X(a)] + D_{[\rho(b),X]}(a) -D_{[\rho(a),X]}(b).
\end{array}
\right.
\end{equation}
In the language of $\S$~\ref{subsec:1_tensors}, $(D,l,r)$ is a 1-derivation on the vector bundle $A$ satisfying the additional compatibilities \eqref{eq:IM_1_1} with respect to its Lie-algebroid structure.

The IM $(1,1)$-tensor $(D,l,r)$ corresponding to a multiplicative $(1,1)$-tensor field $K$ is defined by the conditions 
\begin{equation}\label{eq:inf_(1,1)}
K(\overrightarrow{a}) = \overrightarrow{l(a)}, \,\,\quad Tt(K(U))=r(Tt(U)), \,\,\quad (\mathcal{L}_{\overrightarrow{a}} K)(U)= \overrightarrow{D_{Tt(U)}(a)},
\end{equation}
for $a \in \Gamma(A)$ and $t$-projectable $U \in \mathfrak{X}(\G)$. With the identification  $T\mathcal{G}|_M=TM \oplus A$,  we directly see that $r=K|_{TM}$ and $l=K|_A.$

As before, when $\mathcal{G}$ is a source 1-connected Lie groupoid, we have a bijective correspondence between
multiplicative $(1,1)$-tensor fields and IM $(1,1)$-tensors on $A$ via \eqref{eq:inf_(1,1)}, see \cite[Thm.~3.19]{HT}.
By regarding vector bundles as Lie groupoids (with respect to fiberwise addition), one can check that multiplicative  $(1,1)$-tensors agree with linear ones, and Theorem~\ref{thm:bijection} follows as a particular case of this result.

For a multiplicative $(1,1)$-tensor field $K$ on a source-connected Lie groupoid $\G \toto M$, it is also shown in \cite[$\S$~6.2]{HT} that $K$ is a Nijenhuis operator if and only if the following equations hold for its corresponding IM $(1,1)$-tensor $(D,l,r)$ (cf. \eqref{eq:IMNij}):
\begin{equation}
\label{eq:im_nijenhuis} 
\begin{cases}
\N_r(X,Y) = 0,\\
l(D_X(a) - D_X(l(a)) = 0,\\
D^2_{(X,Y)}(a):=l(D_{[X, Y]}(a)) - [D_X, D_Y](a) - D_{[X,Y]_r}(a) = 0,
\end{cases}
\end{equation}
for all $X, Y \in \frakx(M)$ and $a \in \Gamma(A)$. Here $[D_X, D_Y]$ is the commutator of the operators on $\Gamma(A)$ and $[\cdot,\cdot]_r$ is the deformed bracket \begin{equation}\label{eq:r_bracket}
[X,Y]_r = [r(X),Y]+[X,r(Y)]-r([X,Y]).
\end{equation}
The notation $D^2$ is motivated by a natural extension of $D$ as an operator on $\Gamma(\wedge^{\bullet} T^*M \otimes A)$ studied in \cite{HT}.

%%%%%%%%%%%%%%%%%%%%%%%%%%%%%%%%%%%%%%%%%%%%%%%%%%%%%%%%

\subsection{Global and infinitesimal compatibility}

We will deal here with a natural extension of the compatibility in Definition~\ref{def:romega} to differential forms of arbitrary degrees.
Given a differential form $\omega \in \Omega^p(M)$ and $r \in \Omega^1(M,TM)$, we denote by $\omega_r \in \Gamma(T^*M \otimes \wedge^{p-1} T^*M)$ the tensor field given by
$$
\omega_r(X_1; X_2, \dots, X_p) = \omega(r(X_1), X_2, \dots, X_p).
$$

\begin{defn}\label{dfn:nij_form}\em
We say that the pair $(\omega, r)$ is {\em compatible} if
\begin{itemize}
    \item [$(a)$] $\omega_r$ is skew-symmetric, i.e. $\omega_r \in \Omega^p(M)$,
    \item [$(b)$] $d(\omega_r) = (d \omega)_r$.
\end{itemize}

\end{defn}

A motivating example for this notion of compatibility is given in Proposition~\ref{prop:hol_dif} below. 

Denote by $\Omega_r^p(M)$ the space of differential $p$-forms $\omega$ such  that $\omega_r\in \Omega^p(M)$. Then the operator $D^{r,*}$ \eqref{eq:D*} can be extended to  
$D^{r,*}: \mathfrak{X}(M) \times \Omega_r^p(M) \to \Gamma(T^*M \otimes \wedge^{p-1} T^*M)$ 
via
\begin{equation}\label{eq:Dr_formula}
 D^{r,*}_Y(\omega) = i_Y d(\omega_r) -  i_{r(Y)} d\omega,\qquad \forall \, Y\in \mathfrak{X}(M). 
\end{equation}
So a pair $(\omega,r)$ is compatible if and only if $\omega \in \Omega_r^p(M)$ and $D^{r,*}(\omega)=0$.

%The next result describes the compatibility of Definition~\ref{dfn:nij_form} on a Lie groupoid at the infinitesimal level.

\begin{defn}\label{def:infcomp}{\em
We say that an IM 2-form $(\mu,\nu)$ and an IM $(1,1)$-tensor $(D, l, r)$ on a Lie algebroid $A$ are {\em compatible} if 
\begin{align}   
\label{eq:IM1}   & \mu(l(a))  =\mu(a)_r, \,\,\qquad\quad\;\; \; \nu(l(a))  = \nu(a)_r\\
\label{eq:IM2}   & \mu(D_X(a))  = D_X^{r,*}(\mu(a)), \,\,\, \; \nu(D_X(a))  = D_X^{r,*}(\nu(a)),
\end{align}
for all $X \in \frakx(M)$ and $a \in \Gamma(A)$.}
\end{defn}

\begin{thm}\label{thm:compatibility_G}
Let $\G \toto M$ be a Lie groupoid endowed with  multiplicative $K \in \Omega^1(\G, T\G)$ and $\omega \in \Omega^p(\G)$, with corresponding infinitesimal data  $(D,l,r)$ and $(\mu, \nu)$. If the pair $(\omega, K)$ is compatible then $(D,l,r)$ and $(\mu, \nu)$ are compatible,
and the converse holds if $\G$ is source-connected.
Moreover, the IM $p$-form corresponding to $\omega_K$ is $(\mu \circ l, \nu \circ l)$.
\end{thm}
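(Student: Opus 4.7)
My plan is to match the global compatibility conditions with the infinitesimal identities \eqref{eq:IM1}--\eqref{eq:IM2} by testing everything against right-invariant vector fields $\overrightarrow{a}$, using the defining relations $i_{\overrightarrow{a}}\omega = t^*\mu(a)$, $i_{\overrightarrow{a}}d\omega = t^*\nu(a)$, $K(\overrightarrow a) = \overrightarrow{l(a)}$, $Tt\circ K = r\circ Tt$, and $(\Lie_{\overrightarrow a}K)(U) = \overrightarrow{D_{Tt(U)}(a)}$ for $t$-projectable $U$. A preliminary observation that I would use throughout is that, regardless of compatibility, $\omega_K$ and $(d\omega)_K$ are multiplicative tensors on $\G$; this follows directly from the multiplicativity of $\omega$ and $K$ together with the compatibility of the tangent prolongation $Tm$ with $K$.

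For the forward direction, assuming $(\omega,K)$ compatible, I would compute $i_{\overrightarrow a}\omega_K$ in two ways: as $i_{\overrightarrow{l(a)}}\omega = t^*\mu(l(a))$, and, by swapping $\overrightarrow a$ into the second slot using the skew-symmetry of $\omega_K$, as $-t^*\mu(a)(K(\cdot),\ldots) = -t^*(\mu(a)_r)$ via $Tt\circ K = r\circ Tt$. Equating the two expressions produces $\mu(l(a)) = \mu(a)_r$, and the analogous calculation with $d\omega$ in place of $\omega$ (using $d\omega_K = (d\omega)_K$) gives $\nu(l(a)) = \nu(a)_r$, establishing \eqref{eq:IM1}. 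For \eqref{eq:IM2}, I would apply the tensorial Leibniz identity
\begin{equation*}
\Lie_{\overrightarrow a}(\omega_K) = (\Lie_{\overrightarrow a}\omega)_K + \omega_{\Lie_{\overrightarrow a}K},
\end{equation*}
evaluate both sides at a unit $u$ on $(\tilde X, X_2, \ldots, X_p)$ with $X_i\in T_uM$ and $\tilde X$ a $t$-projectable extension of $X\in\frakx(M)$, and use $(\Lie_{\overrightarrow a}K)(\tilde X)|_u = D_X(a)|_u$ to recognize the last term as $\mu(D_X(a))$. After simplification via \eqref{eq:IM1} and the formula \eqref{eq:Dr_formula} for $D^{r,*}$, the resulting equation reduces to $\mu(D_X(a)) = D_X^{r,*}(\mu(a))$; applying the same argument to $d\omega$ in place of $\omega$ yields the analogous identity for $\nu$.

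For the converse, assuming the infinitesimal compatibility and $\G$ source-connected, I would first establish (a) by considering the multiplicative $(0,p)$-tensor
\begin{equation*}
\Psi(U_1,\ldots,U_p) := \omega(K(U_1), U_2,\ldots,U_p) - \omega(U_1, K(U_2), U_3, \ldots, U_p),
\end{equation*}
whose vanishing is equivalent to the skew-symmetry of $\omega_K$. Reversing the forward computation gives $i_{\overrightarrow a}\Psi = t^*(\mu(l(a)) - \mu(a)_r) = 0$ by \eqref{eq:IM1}, and the analogous identity in every slot; since $\omega|_M=0$ on $(T_uM)^p$, one deduces $\Psi|_u = 0$ as a tensor on $T_u\G = T_uM\oplus A|_u$, and source-connectedness propagates this to $\Psi\equiv 0$. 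Applying the same argument with $d\omega$ in place of $\omega$ (using the $\nu$-part of \eqref{eq:IM1}) shows that $(d\omega)_K$ is also a form. The IM data of $\omega_K$, now a multiplicative form, are computed by the same calculations as in the forward direction together with \eqref{eq:IM2} to be $(\mu\circ l, \nu\circ l)$; the analogous computation for $(d\omega)_K$ (now using \eqref{eq:IM2} for $\nu$) yields IM data $(\nu\circ l, 0)$. Hence $d\omega_K$ and $(d\omega)_K$ are multiplicative $(p+1)$-forms with identical IM data, and the uniqueness of multiplicative forms on source-connected Lie groupoids yields (b) together with the moreover statement. The main technical obstacle is precisely this propagation step: concluding the vanishing of a multiplicative tensor from its vanishing at the unit submanifold and under contraction with right-invariant vector fields relies essentially on source-connectedness, as the flows of these vector fields within source fibers transport the vanishing from $M$ to all of $\G$.
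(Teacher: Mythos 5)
Your forward direction is sound and follows the paper's computation in essentially the same way (the paper organizes it by comparing $\Lie_{\overrightarrow{a}}\tau_1$ with $\Lie_{\overrightarrow{a}}\tau_2$ rather than via the Leibniz rule for $\Lie_{\overrightarrow{a}}(\omega_K)$, but the content is identical). The converse, however, has a genuine gap in the propagation step, which you yourself flag as the main technical point. It is not true that a multiplicative $(0,p)$-tensor on a source-connected groupoid must vanish once it vanishes along $M$ and under contraction with right-invariant vector fields in every slot. A counterexample already occurs for $p=1$: on the bundle of abelian groups $\G=\R_x\times\R_g\toto\R_x$ (source $=$ target $=x$, multiplication adding the $g$-coordinates), the $1$-form $\omega=g\,h(x)\,dx$ is multiplicative, vanishes along $M=\{g=0\}$, and satisfies $i_{\overrightarrow{a}}\omega=0$ for every section $a=f(x)\partial_g$ of the algebroid, yet $\omega\neq 0$. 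The flow argument fails because transporting the vanishing of $\Psi$ along the flow of $\overrightarrow{a}$ requires controlling $\Lie_{\overrightarrow{a}}\Psi$, not merely $i_{\overrightarrow{a}}\Psi$; the correct criterion, used in the paper via \cite[Rem.~3.16]{HT}, is that a multiplicative tensor vanishes if and only if both its contractions with right-invariant vector fields in every slot \emph{and} its Lie derivatives along right-invariant vector fields vanish.

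This matters structurally: verifying $\Lie_{\overrightarrow{a}}(\tau_1-\tau_2)=0$ amounts to the skew-symmetry of $\zeta(a)(Y_1;Y_2,\dots,Y_p)=(\nu(a)+d\mu(a))_r(Y_1,\dots,Y_p)+\mu(D_{Y_1}(a))(Y_2,\dots,Y_p)$, and this does \emph{not} follow from \eqref{eq:IM1} alone --- one needs \eqref{eq:IM2} to rewrite $\mu(D_X(a))=i_Xd(\mu(a)_r)-i_{r(X)}d\mu(a)$ and recognize $\zeta(a)$ as the contraction of the genuine form $\nu(l(a))+d\mu(l(a))$. So your clean separation --- ``$\omega_K$ is a form from \eqref{eq:IM1}, then $d(\omega_K)=(d\omega)_K$ from \eqref{eq:IM2}'' --- cannot be carried out: both halves of the infinitesimal compatibility are already needed just to conclude that $\omega_K$ is skew-symmetric. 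Once the Lie-derivative condition is added to your propagation step (and checked using \eqref{eq:IM2}), the rest of your argument, including the identification of the IM data of $\omega_K$ and $(d\omega)_K$ and the final appeal to uniqueness of multiplicative forms on source-connected groupoids, goes through and coincides with the paper's proof.
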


\begin{proof}
Let $\tau_1, \tau_2 \in \Gamma(\otimes^pT^*\mathcal{G})$ be tensor fields on $\mathcal{G}$ defined by
$$
\tau_1(U_1,\dots,U_p)=\omega(K(U_1),U_2,\dots, U_p), \,\,\, \tau_2(U_1,\dots, U_p) = \omega(U_1, K(U_2), U_3, \dots, U_p), $$
for $U_1, \dots, U_p \in \frakx(\G)$. Note that $\omega \in \Omega_K^p(\G)$ if and only if $\tau_1=\tau_2$. Since $\tau_1, \tau_2$ are multiplicative tensor fields, assuming that $\G$ is source connected they will be equal if and only if (see \cite[Rem.~3.16]{HT})
\begin{align}
\tag{A}  \tau_1(\cdot, \overbrace{\overrightarrow{a}}^{i\text{th-entry}}, \cdot)  & = \tau_2(\cdot, \overbrace{\overrightarrow{a}}^{i\text{th-entry}},\cdot), \,\,\quad \forall \, i=1,\dots, p,\\
\tag{B}  \mathcal{L}_{\overrightarrow{a}} \tau_1  & =\mathcal{L}_{\overrightarrow{a}} \tau_2.
\end{align} 
We first claim that (A) is equivalent to the first equation in \eqref{eq:IM1}. Indeed,  it follows from \eqref{eq:im_form} and \eqref{eq:inf_(1,1)} that
\begin{align*}
\tau_1(\overrightarrow{a},U_1,...,U_{p-1}) & = \omega(K(\overrightarrow{a}),U_1, \dots, U_{p-1}) = \omega(\overrightarrow{l(a)},U_1, \dots, U_{p-1}) = \mu(l(a))(X_1, \dots, X_{p-1}),
\end{align*}
where $X_j= Tt(U_j)$, $j=1,\dots, p$.
Similarly,
\begin{align*}
\tau_2(\overrightarrow{a},U_1,...,U_{p-1}) 
%& = \omega(\overrightarrow{a},K(U_1), U_2, \dots, U_{p-1}) = \mu(a)(Tt(K(U_1)), X_2, 
%\dots, X_{p-1})\\
 = \mu(a)(r(X_1), X_2, \dots, X_{p-1}).
\end{align*}
Therefore $\mu(l(a))  =\mu(a)_r$ is equivalent to (A) for $i=1$.
Other values of $i$ work similarly.

Next, using that $\Lie_{\overrightarrow{a}}\omega = t^*(\nu(a) + d\mu(a))$
and \eqref{eq:inf_(1,1)}, we see that
\begin{align*}
    (\mathcal{L}_{\overrightarrow{a}} \tau_1)(U_1,U_2,...,U_p) = (\nu(a)+d\mu(a))_r(X_1,\dots, X_p) + \mu(D^r_{X_1}(a))|_{(X_2, \dots,X_p)}.
\end{align*}
Similarly,
\begin{align*}
(\mathcal{L}_{\overrightarrow{a}} \tau_2)(U_1,U_2,...U_p)&=-(\nu(a) + d\mu(a))_r(X_2,X_1,X_3,\dots, X_p) - \mu(D^r_{X_2}(a))|_{(X_1,X_3,...,X_p)}.
\end{align*}
So, if we define $\zeta: \Gamma(A) \to \Gamma(T^*M \otimes \wedge^{p-1} T^*M)$ as 
$$
\zeta(a)(Y_1; Y_2, \dots, Y_p) = (\nu(a) + d\mu(a))_r(Y_1, \dots, Y_p) + \mu(D^r_{Y_1}(a))|_{(Y_2, \dots, Y_p)}, 
$$
we see that (B) holds if and only if $\zeta$ takes values in $\Omega^p(M)$ (i.e. $\zeta(a)$ is skew-symmetric) for all $a \in \Gamma(A)$. 

When (A) and (B) hold, we have that $\tau_1=\tau_2 =\omega_K \in \Omega^{p}(\G)$ is a multiplicative $p$-form, and the corresponding IM $p$-form $(\mu_K, \nu_K)$ on $A$ is given by
$$
\mu_K(a)  = \mu(l(a)), \qquad
\nu_K(a)  = \zeta(a) - d(\mu(l(a))), 
$$
where for the second equality we used that $i_{\overrightarrow{a}}d\omega_K = \mathcal{L}_{\overrightarrow{a}} \omega_K - d i_{\overrightarrow{a}}\omega_K$. Using that $\mu(l(a)) = \mu(a)_r$ and \eqref{eq:Dr_formula},  the second equality can be rewritten as
$$
i_X \nu_K(a) =  i_X \nu(a)_r + \mu(D^r_X(a)) - D^{r,*}_X(\mu(a)).
$$

Arguing similarly with $d\omega$ in place of $\omega$, we  conclude that $d\omega \in \Omega_K^{p+1}(\G)$ 
if and only if 
$$
\nu(l(a))= \nu(a)_r
$$ 
and $\zeta^d: \Gamma(A) \to \Gamma(T^*M \otimes \wedge^{p} T^*M)$ defined by
$$
\zeta^d(a)(Y_1; Y_2, \dots, Y_{p+1}) = (d\nu(a))_r(Y_1, Y_2, \dots, Y_{p+1}) + \nu(D^r_{Y_1}(a))|_{(Y_2, \dots, Y_{p+1})} 
$$
is skew-symmetric. In this case, the IM $(p+1)$-form $(\mu_K^d, \nu_K^d)$ corresponding to $(d\omega)_K$ satisfies
$$
\mu_K^d(a) = \nu(l(a)), \,\,\quad  i_X\nu_K^d(a) = \nu(D^r_X(a)) - D^{r,*}_X(\nu(a)).
$$
When $\G$ is source-connected, we conclude that
$(d\omega)_K  = d(\omega_K)$ if and only if
$\mu^d_K(a) = \nu_K(a)$ and $\nu_K^d = 0$, which in turn is equivalent to 
$$
\mu(D^r(a))=D^{r,*}(\mu(a)),\qquad \nu(D^r(a)) = D^{r,*}(\nu(a)).
$$

\end{proof}

\subsection{Holomorphic multiplicative forms}
\label{sec:holomorphic}

%%%%%%%%%%%%%%%%%%%%%%%%%%%%%%%%%%%%%%%%%%%%%%%%%%%
%\section{Application to the holomorphic setting}

In this section we specialize the results from the previous section to the holomorphic setting.

%%%%%%%%%%%%%%%%%%%%%%%%%%%%%%%%%%%%%%%%%%%%%%%%%%%%%%%%

%\subsection{Holomorphic differential forms}

We start with a general observation relating the compatibility in Definition~\ref{dfn:nij_form} with holomorphic forms.
Let $r: TM \to TM$ be a complex structure on $M$.

\begin{prop}\label{prop:hol_dif}
 A complex form $\Omega = \omega + \ii \omega_1 \in \Omega^p(M, \C)$ is a holomorphic $p$-form  if and only if the pair $(\omega, r)$ is compatible and $\omega_1 = -\omega_r$. 
\end{prop}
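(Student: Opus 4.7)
The plan is to characterize the two conditions, $\Omega$ being of type $(p,0)$ and $\Omega$ being $\bar{\partial}$-closed, separately and translate each in terms of $\omega$ and $\omega_1$. A complex $p$-form $\Omega \in \Omega^p(M,\mathbb{C})$ is of type $(p,0)$ precisely when, extended $\mathbb{C}$-linearly, it vanishes whenever one entry is of type $(0,1)$; by skew-symmetry this is equivalent to the pointwise condition
\[
\Omega(r(X), X_2, \dots, X_p) = \ii\, \Omega(X, X_2, \dots, X_p)
\]
for all real vector fields $X, X_2, \dots, X_p$. First I would write $\Omega = \omega + \ii\omega_1$ and expand both sides, separating real and imaginary parts, to conclude that the $(p,0)$ condition is equivalent to $\omega_r = -\omega_1$ together with $(\omega_1)_r = \omega$. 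Since $r^2=-\id$ the second relation follows automatically from the first, and since $\omega_1 \in \Omega^p(M)$ we automatically obtain that $\omega_r = -\omega_1$ is skew-symmetric, which is condition $(a)$ of Definition~\ref{dfn:nij_form}.

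Next I would address holomorphicity. Recall that a $(p,0)$-form $\Omega$ is holomorphic if and only if $\bar{\partial}\Omega = 0$, which in turn is equivalent to $d\Omega$ being of type $(p+1,0)$. Writing
\[
d\Omega = d\omega + \ii\, d\omega_1 = d\omega - \ii\, d(\omega_r),
\]
and applying the pointwise characterization of type $(p+1,0)$ already established to $d\Omega$, I would obtain
\[
(d\omega)_r = -d\omega_1 = d(\omega_r),
\]
which is precisely condition $(b)$ of Definition~\ref{dfn:nij_form}. (The skew-symmetry needed on the real part of $d\Omega$, i.e.\ that $(d\omega)_r \in \Omega^{p+1}(M)$, is immediate from $d\omega_1 \in \Omega^{p+1}(M)$.)

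Putting the two steps together gives both directions of the equivalence: if $\Omega$ is holomorphic then $\omega_1 = -\omega_r$ and $(\omega,r)$ is compatible; and conversely, these two conditions imply $\Omega \in \Omega^{p,0}(M)$ and $d\Omega \in \Omega^{p+1,0}(M)$, hence $\bar{\partial}\Omega = 0$. I do not expect any serious obstacle here: the whole argument reduces to the pointwise identity translating $(p,0)$-type via $r$, combined with a routine splitting into real and imaginary parts. The only mild subtlety is to make sure one uses $r^2 = -\id$ to avoid imposing redundant relations when comparing real and imaginary parts on each slot.
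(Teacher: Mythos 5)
Your argument is correct, and its first half coincides exactly with the paper's: both reduce the $(p,0)$-type condition to the vanishing of $i_{X+\ii r(X)}\Omega$ for all real $X$, which unpacks to $\omega_1=-\omega_r$ (with condition (a) of Definition~\ref{dfn:nij_form} coming for free from the skew-symmetry of $\omega_1$). Where you diverge is in the treatment of $\bar{\partial}\Omega=0$. The paper computes $i_{Y+\ii r(Y)}\bar{\partial}\Omega=\Lie_{Y+\ii r(Y)}\Omega$ via Cartan's formula and then massages the real and imaginary parts into $i_{r(Y)}\bigl(d(\omega_r)-(d\omega)_r\bigr)$ and $i_Y\bigl((d\omega)_r-d(\omega_r)\bigr)$, a short but computation-heavy step. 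You instead observe that for a $(p,0)$-form, $\bar{\partial}\Omega=0$ is equivalent to $d\Omega=d\omega-\ii\,d(\omega_r)$ being of type $(p+1,0)$, and then simply reapply the algebraic type characterization from the first step to $d\Omega$, yielding $-d(\omega_r)=-(d\omega)_r$ directly. This is a genuine (if modest) simplification: it replaces the Lie-derivative manipulation with a second use of the same pointwise lemma, at the cost of invoking explicitly that $d$ maps $\Omega^{(p,0)}$ into $\Omega^{(p+1,0)}\oplus\Omega^{(p,1)}$ — which holds because $r$ is assumed integrable, exactly as in the paper's use of $d=\partial+\bar{\partial}$. Both proofs are complete; yours is arguably the more transparent of the two.
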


\begin{proof}
The condition $\Omega \in \Omega^{(p,0)}(M)$ is equivalent to $i_{X+\ii r(X)}\Omega=0$ for all $X\in \mathfrak{X}(M)$, which in turn  holds  if and only if $\omega_1 = -\omega_r$.
We now verify that condition $(b)$ in Definition~\ref{dfn:nij_form} is equivalent to $\bar{\partial}\Omega=0$. For $Y\in \mathfrak{X}(M)$, using that $d=\partial + \bar{\partial}$ and Cartan's formula, we see that
$$
  i_{Y+\ii r(Y)}\bar{\partial}\Omega = \Lie_{Y+\ii r(Y)}\Omega =
  \Lie_{Y+\ii r(Y)}(\omega - \ii \omega_r) = A + \ii B,
$$
where $A=\Lie_Y\omega+\Lie_{r(Y)}(\omega_r)$ and $B=\Lie_{r(Y)}\omega-\Lie_Y(\omega_r)$.  Again by Cartan's formula,
we have
$$
A=i_{r(Y)}(d (\omega_r) -(d\omega)_r),\qquad B= i_Y((d\omega)_r - d (\omega_r))  
$$
So the result follows.
\end{proof}

\subsubsection{Infinitesimal description of holomorphic multiplicative forms.}
A {\em holomorphic Lie groupoid} is a Lie groupoid $\mathcal{G}\rightrightarrows M$, where both $\G$ and $M$ are complex manifolds and all the structures maps ${m}, {\epsilon}, {\textit{i}}, {s}$ and ${t}$ are holomorphic.
%\end{defn}
%\begin{defn}\em
A {\em holomorphic Lie algebroid} is a holomorphic vector bundle $\mathcal{A} \to M$ endowed with a holomorphic bundle map $\widehat{\rho}: \mathcal{A} \to T^{1,0}M$ and a complex Lie algebra structure on the sheaf of holomorphic sections $\Gamma_{\mathrm{hol}}(\cdot, \mathcal{A})$ such that $\widehat{\rho}$ induces a morphism of sheaves of complex Lie algebras from $\Gamma_{\mathrm{hol}}(\cdot, \mathcal{A})$ to $\Gamma_{\mathrm{hol}}(\cdot, T^{1,0}M)$ and  
$$
[\sigma_1, f \sigma_2] = f [\sigma_1, \sigma_2] + (\Lie_{\widehat{\rho}(\sigma_1)}f) \sigma_2, %\,\qquad \sigma_1, \, \sigma_2 \in \Gamma_{\mathrm{hol}}({U}, \mathcal{A}), \, f \in \mathcal{O}(U),
$$
for $\sigma_1$, $\sigma_2$ (local) holomorphic sections of $\mathcal{A}$ and $f$ a (local) holomorphic function.

A holomorphic Lie groupoid $\G \toto M$ gives rise to a holomorphic Lie algebroid just as in the smooth real case. Since $s$ is holomorphic, $\mathcal{A}_\G:= \ker(Ts: T^{1,0}\G|_M \to T^{1,0}M)$ is a holomorphic vector bundle over $M$. It becomes a holomorphic Lie algebroid with Lie bracket $[\cdot, \,\cdot]$ induced by the one on (local) right-invariant holomorphic vector fields and $\widehat{\rho} = Tt|_{\mathcal{A}_{\G}}$.
A holomorphic Lie algebroid $\A \to M$ is said to be \textit{integrable} if $\A = \mathcal{A}_\G$ for a holomorphic Lie groupoid $\G \toto M$.

The theory of multiplicative forms and IM-forms can be naturally extended to the holomorphic context. 
A holomorphic $p$-form $\Omega$ is {\em multiplicative} if 
$$
m^*\Omega = \pr_1^*\Omega + \pr_2^*\Omega.
$$ 
Similarly, a {\em holomorphic IM $p$-form} on a holomorphic Lie algebroid $\A$ is a pair $(\widehat{\mu},\widehat{\nu})$, where $\widehat{\mu}:\A \rightarrow \wedge^{(p-1,0)}T^*M$ and $\widehat{\nu}:\A \rightarrow \wedge^{(p,0)}T^*M$ are holomorphic vector-bundle maps satisfying the IM equations
\begin{equation} \label{eq:hol_IM_eqs}
\left\{
\begin{array}{rl}
    \widehat{\mu}([u,v]) \!\! & =\mathcal{L}_{\widehat{\rho}(u)}\widehat{\mu}(v) - i_{\widehat{\rho}(v)}(\partial \widehat{\mu}(u) + \widehat{\nu}(u)),\\
    \widehat{\nu}([u,v]) \!\!& = \mathcal{L}_{\widehat{\rho}(u)}\widehat{\nu}(v) - i_{\widehat{\rho}(v)}\partial \widehat{\nu}(u),\\
    i_{\widehat{\rho}(u)}\widehat{\mu}(v) \!\!& = -i_{\widehat{\rho}(v)}\widehat{\mu}(u),
\end{array}
\right.
\end{equation}
for all $u, v$ (local) holomorphic sections of $\mathcal{A}$.

The following result is a  holomorphic version of  \cite[Thm.~4.6]{HA} describing the infinitesimal counterparts of multiplicative forms. 

\begin{thm}\label{thm:hol_dif}
Let $\mathcal{G}\rightrightarrows M$ be a holomorphic source 1-connected Lie groupoid and $\A$ its holomorphic Lie algebroid. There is a one-to-one correspondence between multiplicative holomorphic $p$-forms $\Omega$ on $\G$ and holomorphic IM $p$-forms $(\widehat{\mu}, \widehat{\nu})$ on $\A$ given by
\begin{equation}\label{eq:hol_IM_rel}
    i_{\overrightarrow{u}}\Omega = t^*\widehat{\mu}(u), \,\,\qquad  i_{\overrightarrow{u}}\partial \Omega = t^*\widehat{\nu}(u),
\end{equation}
where $\overrightarrow{u}$ is the (local) right-invariant vector field on $\G$ corresponding to a (local) holomorphic section $u$ of $\A$. 
\end{thm}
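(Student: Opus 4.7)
The plan is to bootstrap from three earlier results: the real infinitesimal-multiplicative correspondence of \cite[Thm.~4.6]{HA}, Theorem \ref{thm:compatibility_G} in this paper, and Proposition \ref{prop:hol_dif}. The underlying real Lie groupoid $\G_{\R} \toto M_{\R}$ of $\G$ carries a multiplicative complex structure $K: T\G_{\R} \to T\G_{\R}$ with base $r: TM_{\R} \to TM_{\R}$. By the multiplicative-to-IM correspondence for $(1,1)$-tensors recalled in $\S$\ref{subsec:multiplicative}, $K$ is encoded by an IM $(1,1)$-tensor $(D,l,r)$ on the real Lie algebroid $A = A_{\G_{\R}}$, with $l^2 = -\id_A$, $r^2 = -\id_{TM}$, and satisfying the IM-Nijenhuis equations \eqref{eq:im_nijenhuis}; the holomorphic Lie algebroid $\A_{\G}$ is recovered from $(A, D, l, r)$ via an algebroid analogue of Example \ref{ex:holtancot}, with Dolbeault operator given by $a \mapsto -l(D(a))$.

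First I reduce holomorphic multiplicative forms to real multiplicative forms compatible with $K$. Writing a complex $p$-form as $\Omega = \omega - \ii \omega_K$ with $\omega \in \Omega^p(\G_{\R})$ real, Proposition \ref{prop:hol_dif} gives that $\Omega$ is holomorphic if and only if $(\omega, K)$ is compatible in the sense of Definition \ref{dfn:nij_form}; and $\Omega$ is multiplicative if and only if $\omega$ is, since $K$ is multiplicative and the operation $\omega \mapsto \omega_K$ commutes with the relevant pullbacks. Since $\G$ source 1-connected implies $\G_{\R}$ source 1-connected, \cite[Thm.~4.6]{HA} gives a bijection between real multiplicative $p$-forms $\omega$ on $\G_{\R}$ and real IM $p$-forms $(\mu, \nu)$ on $A$ via \eqref{eq:im_form}; Theorem \ref{thm:compatibility_G} then restricts this to a bijection between pairs $(\omega, K)$ that are compatible and real IM $p$-forms $(\mu, \nu)$ compatible with $(D, l, r)$ in the sense of Definition \ref{def:infcomp}.

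The remaining step is to identify such compatible real IM pairs $(\mu, \nu)$ with holomorphic IM $p$-forms $(\widehat{\mu}, \widehat{\nu})$ on $\A_{\G}$. I define $\widehat{\mu}, \widehat{\nu}$ by $\C$-linearly extending $\mu, \nu$ and restricting to the $(1,0)$-summand $\A_{\G} \subset A \otimes \C$. The first compatibility \eqref{eq:IM1} precisely says that $\mu(l(a))$ is the $r$-contraction of $\mu(a)$, which is the IM-level shadow of the algebraic content of Proposition \ref{prop:hol_dif} and forces $\widehat{\mu}, \widehat{\nu}$ to take values in $\wedge^{(p-1,0)}T^*M$ and $\wedge^{(p,0)}T^*M$ respectively. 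The second compatibility \eqref{eq:IM2} asserts that $\mu$ and $\nu$ intertwine $D$ on $A$ with $D^{r,*}$ on $\wedge^{\bullet}T^*M$, which is exactly the statement that $\widehat{\mu}, \widehat{\nu}$ are holomorphic vector bundle maps (with respect to the Dolbeault operator on $A^* \otimes \wedge^{\bullet}T^*M$ obtained by dualising $(D,l,r)$, as in \eqref{eq:dualDgeral}). The holomorphic IM equations \eqref{eq:hol_IM_eqs} then follow from the real IM equations \eqref{eqmubr} by $\C$-linear extension along holomorphic sections, using $d = \partial$ on forms of type $(\bullet, 0)$, and \eqref{eq:hol_IM_rel} follows from \eqref{eq:im_form} since $\overrightarrow{u}$ for a holomorphic section $u$ of $\A_{\G}$ is the $(1,0)$-part of the corresponding real right-invariant field. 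The converse direction reverses these identifications: a holomorphic IM $p$-form on $\A_{\G}$ has real/imaginary parts giving a compatible real IM pair on $A$. The main obstacle I anticipate is this last step, specifically the careful bookkeeping required to check that \eqref{eq:IM1}-\eqref{eq:IM2}, together with the real equations \eqref{eqmubr}, repackage exactly into holomorphy plus \eqref{eq:hol_IM_eqs}; this hinges on correctly identifying the Dolbeault structure on the dual bundle encoded by the 1-derivation dual to $(D,l,r)$.
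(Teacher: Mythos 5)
Your proposal follows essentially the same route as the paper: the paper's proof writes $\Omega = \omega - \ii\,\omega_J$, invokes Proposition \ref{prop:hol_dif} to translate holomorphy of $\Omega$ into compatibility of $(\omega, J)$, uses \cite[Thm.~4.6]{HA} together with Theorem \ref{thm:compatibility_G} and Lemma \ref{lem:hol_str} to pass to compatible IM pairs, and then identifies those with holomorphic IM $p$-forms. The one substantive step you sketch rather than carry out --- that the compatibility conditions \eqref{eq:IM1}--\eqref{eq:IM2} together with the real IM equations \eqref{eqmubr} repackage exactly into holomorphy of $(\widehat{\mu},\widehat{\nu})$ plus \eqref{eq:hol_IM_eqs} --- is precisely the content of the paper's Proposition \ref{prop:hol_IM_equations}, whose proof requires the explicit $\Upsilon$-computation; you correctly flag this as the main remaining work.
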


As in the real smooth case, we say that a holomorphic IM $p$-form is {\em closed} when $\widehat{\nu}$ is zero, since these are in correspondence with closed multiplicative holomorphic forms in  Theorem~\ref{thm:hol_dif}.

We will prove this theorem in $\S$ \ref{subsubsec:proof} below using Theorem~\ref{thm:compatibility_G} and the characterization of  holomorphic $p$-forms and IM $p$-forms in terms of their real parts that we discuss next (Prop.~\ref{prop:hol_IM_equations}).

%%%%%%%%%%%%%%%%%%%%%%%%%%%%%%%%%%%%%%%%%%%%%%%%%%%%%%%%%%%%%%%%
\subsubsection{The underlying real geometry}\label{subsec:real}

We now expressing various holomorphic objects in terms of their underlying real structures. 

As shown in \cite[Prop.~3.3]{LMX2}, a holomorphic Lie algebroid $\mathcal{A}\to M$ can always be viewed as a real Lie algebroid  $A\to M$ together with a holomorphic structure on its underlying vector bundle such that the Lie bracket on $\Gamma(A)$ restricts to a $\C$-linear Lie bracket on the sheaf of holomorphic sections. Equivalently, if one expresses the holomorphic structure on $A$ as a Dolbeault 1-derivation $(D^{\mathrm{hol}}, l , r)$ (see Example \ref{ex:hol_str}), it is shown
in \cite[Sec.~6]{HT} that its compatibility with the Lie-algebroid structure amounts to $(D^{\mathrm{hol}}, l , r)$ being an IM $(1,1)$-tensor on $A$ (i.e., the equations in \eqref{eq:IM_1_1} hold).

A holomorphic Lie groupoid, in turn, is equivalent to a Lie groupoid $\G\toto M$ equipped with a complex structure $J$ that is multiplicative
 \cite[Prop.~3.16]{SXC}.
 For a holomorphic Lie groupoid $(\G, J)$, its real Lie algebroid $A_\G$ inherits  an IM $(1,1)$-tensor $(D,l, r)$ defined by $J$ as well as a holomorphic structure $(D^{\mathrm{hol}}, l, r)$ induced by the the isomorphism of real vector bundles 
$$
\Psi: A_\G \to \mathcal{A}_\G,\qquad \Psi(a) = \frac{1}{2}(a-\ii l(a)),
$$ 
coming from $T\G \ni U \mapsto \frac{1}{2}(U-\ii J(U)) \in T^{1,0}\G$. Notice that
%\begin{align}
\begin{equation}
\label{eq:hol_anchor}
\widehat{\rho}(\Psi(a)) = \frac{1}{2}(\rho(a) - \ii r(\rho(a))).
\end{equation}

\begin{lem}\label{lem:hol_str}
We have that $D=D^{\mathrm{hol}}$, so $(D,l,r)$ coincides with the Dolbeault 1-derivation on $A_\G$ induced by $\Psi$.
\end{lem}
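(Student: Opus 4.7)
The plan is to exhibit both $D$ and $D^{\mathrm{hol}}$ as 1-derivations on $A_\G$ sharing the same endomorphism $l$ and base map $r$, and then check that they agree on a set of sections that spans $A_\G$ locally as a $C^\infty(M,\R)$-module. First I would verify that under $\Psi$ fibrewise multiplication by $\ii$ on $\mathcal{A}_\G$ corresponds to $l$, since a one-line computation gives $\ii\,\Psi(a) = \Psi(l(a))$; thus the fibrewise complex structure encoding the Dolbeault 1-derivation on $A_\G$ via $\Psi$ is exactly $l = J|_{A_\G}$, while the base map of both 1-derivations is the complex structure $r = J|_{TM}$. Because they satisfy the same Leibniz rule \eqref{eq:genleibniz} with the same $(l,r)$, the difference $D - D^{\mathrm{hol}}$ is $C^\infty(M)$-linear in the section slot, so it is enough to verify $D(a) = D^{\mathrm{hol}}(a)$ for $a$ in any local generating family.

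The natural generating family comes from holomorphic sections: given a local holomorphic frame $u_1, \dots, u_n$ of $\mathcal{A}_\G$, the family $\{u_j, \ii u_j\}$ is a real frame of the underlying real vector bundle, so under $\Psi^{-1}$ it produces sections $\{a_j, l(a_j)\}$ that form a local $C^\infty(M,\R)$-frame of $A_\G$, each of which is of the form $\Psi^{-1}(u)$ for some local holomorphic section $u$ of $\mathcal{A}_\G$. It thus suffices to show that $D(a) = D^{\mathrm{hol}}(a) = 0$ whenever $a = \Psi^{-1}(u)$ with $u$ holomorphic. The vanishing $D^{\mathrm{hol}}(a) = 0$ is the intrinsic characterization of holomorphic sections via the Dolbeault 1-derivation recalled in Example~\ref{ex:hol_str}.

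For the vanishing $D(a) = 0$, the key point is that $u$ holomorphic is, by the very definition of the holomorphic Lie algebroid structure on $\mathcal{A}_\G$, equivalent to the right-invariant vector field
\[
\overrightarrow{u} = \overrightarrow{\tfrac{1}{2}(a - \ii\, l(a))} = \tfrac{1}{2}\bigl(\overrightarrow{a} - \ii\, J(\overrightarrow{a})\bigr)
\]
being a holomorphic vector field on $(\G, J)$ (using that $J$ is multiplicative, so $J(\overrightarrow{a}) = \overrightarrow{l(a)}$). This says that $\overrightarrow{a}$ is the real part of a holomorphic vector field, which, since $J$ is an integrable complex structure, is the standard characterization of $\Lie_{\overrightarrow{a}} J = 0$. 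Feeding $\Lie_{\overrightarrow{a}} J = 0$ into the defining relation \eqref{eq:inf_(1,1)} of the IM $(1,1)$-tensor $(D,l,r)$ yields $\overrightarrow{D_{Tt(U)}(a)} = 0$ for every $t$-projectable $U \in \mathfrak{X}(\G)$; as $Tt$ restricted to $t$-projectable vector fields surjects onto $\mathfrak{X}(M)$, we conclude $D_X(a) = 0$ for all $X \in \mathfrak{X}(M)$, hence $D(a) = 0$.

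The main obstacle I anticipate is conceptual rather than computational: one must keep clear that $D$ and $D^{\mathrm{hol}}$ are genuinely 1-derivations sharing the same $(l,r)$ before invoking the Leibniz argument, and that ``$u$ holomorphic'' is used in two different guises (the definition of the holomorphic Lie algebroid, to pass to a holomorphic right-invariant vector field on $\G$; and the intrinsic characterization of the Dolbeault 1-derivation, to get $D^{\mathrm{hol}}(a) = 0$). Once these two identifications are in place, the equivalence ``real part of a holomorphic vector field $\Leftrightarrow$ $\Lie J = 0$'' (valid because $\N_J = 0$) delivers the rest.
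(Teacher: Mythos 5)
Your argument is correct, but it takes a genuinely different route from the paper's. The paper observes that $\Psi$ is the restriction of the standard identification $T\G\to T^{1,0}\G$, so that $D^{\mathrm{hol}}$ is computed by restricting the Dolbeault 1-derivation $(D^J,J,J)$ of Example~\ref{ex:holtancot} to right-invariant extensions: $D^{\mathrm{hol}}_X(a)=D^J_U(\overrightarrow{a})|_M=(\Lie_{\overrightarrow{a}}J)(U)|_M=D_X(a)$, using \eqref{eq:Dr} and \eqref{eq:inf_(1,1)}; this treats arbitrary sections $a$ in one stroke. You instead exploit that $D$ and $D^{\mathrm{hol}}$ are 1-derivations sharing the same $(l,r)$ (your check that $\ii\,\Psi(a)=\Psi(l(a))$ is the right justification), so their difference is $C^\infty(M)$-linear, and you then verify the identity only on the frame $\{a_j, l(a_j)\}$ coming from a holomorphic frame of $\mathcal{A}_\G$: there $D^{\mathrm{hol}}$ vanishes by Dolbeault flatness, and $D$ vanishes because $\overrightarrow{u}$ is a holomorphic vector field, so $\Lie_{\overrightarrow{a}}J=0$ and \eqref{eq:inf_(1,1)} forces $D_X(a)=0$. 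What your approach buys is that you never need to argue that the Dolbeault structure on $A_\G$ is literally the restriction of the one on $T\G$ --- a point the paper's one-line computation quietly relies on; the price is the extra tensoriality reduction and the appeal to the classical equivalence between holomorphic vector fields and infinitesimal automorphisms of $J$, which indeed requires $\N_J=0$ as you note. Both proofs ultimately hinge on the same identity $(\Lie_{\overrightarrow{a}}J)(U)=\overrightarrow{D_{Tt(U)}(a)}$ from \eqref{eq:inf_(1,1)}.
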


\begin{proof} 
Note that $\Psi: A_\G \to \mathcal{A}_\G$ is the restriction of the usual identification $T\G \to T^{1,0}\G$, which induces the holomorphic structure on $T\G$ given by the Dolbeault 1-derivation $(D^J, J, J)$, see Example~\ref{ex:holtancot}. Hence, for $a\in \Gamma(A)$, we have that
$$
D_X^{\mathrm{hol}}(a) = D^{J}_U(\overrightarrow{a})|_M = (\Lie_{\overrightarrow{a}} J)(U)|_M = D_X(a),
$$
where $U \in \frakx(\G)$ is any vector field such that $U|_M = X$, and we used \eqref{eq:Dr} and \eqref{eq:inf_(1,1)} in the last two equalities.
\end{proof}

\begin{rmk}[Integration of holomorphic Lie algebroids]\label{rem:holinteg}\em
It is shown in \cite{LMX2} that a holomorphic Lie algebroid is integrable if and only if so is its underlying real Lie algebroid. This can be readily seen as follows. Consider a holomorphic Lie algebroid $\mathcal{A}$, given by a real Lie algebroid  $A\to M$ with Dolbeault IM $(1,1)$-tensor $(D^{\mathrm{hol}}, l, r)$.
If $\G$ is a source 1-connected Lie groupoid integrating  $A$, then $(D^{\mathrm{hol}}, l, r)$ integrates to a multiplicative $(1,1)$-tensor field $J$ on $\G$ that is a complex structure, making $\G$ into a holomorphic Lie groupoid integrating $\mathcal{A}$ (by Lemma~\ref{lem:hol_str}). 
\hfill $\diamond$
\end{rmk}

Let $\A$ be a holomorphic Lie algebroid defined by a real Lie algebroid $A \to M$ with holomorphic structure $(D^{\mathrm{hol}}, l, r)$.

\begin{prop}\label{prop:hol_IM_equations}
There is a one-to-one correspondence between  IM $p$-forms $(\mu, \nu)$ on $A$ compatible with $(D^{\mathrm{hol}}, l, r)$ (as in Def.~\ref{def:infcomp}) and holomorphic IM $p$-forms $(\widehat{\mu},\widehat{\nu})$ on $\A$ given by
\begin{equation}\label{eq:IM_real_part}
\widehat{\mu} = \mu - \ii \mu \circ l, \qquad \widehat{\nu}  = \nu - \ii \nu \circ l.
\end{equation}
\end{prop}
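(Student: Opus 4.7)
The plan is to proceed in three steps.

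First, I would set up a bijection at the level of bundle maps: since $\mathcal{A}$ is identified via $\Psi$ with $A$ endowed with the complex structure $l$, the formula $\widehat{\mu} = \mu - \ii\,\mu \circ l$ gives a bijection between $\mathbb{R}$-linear bundle maps $\mu : A \to \wedge^{p-1}T^*M$ and $\C$-linear bundle maps $\widehat{\mu} : \mathcal{A} \to \wedge^{p-1}T^*M\otimes\C$, with inverse $\widehat{\mu}\mapsto\mathrm{Re}(\widehat{\mu})$; the $\C$-linearity $\widehat{\mu}(l(a)) = \ii\,\widehat{\mu}(a)$ is immediate from $l^2 = -\mathrm{id}_A$. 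The analogous statement holds for $\nu$ and $\widehat{\nu}$.

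Second, I would translate each ingredient of ``holomorphic IM $p$-form'' into a condition on $(\mu,\nu)$. By the type characterization behind Proposition~\ref{prop:hol_dif}, $\widehat{\mu}(a)$ lies in $\wedge^{(p-1,0)}T^*M$ for every $a$ iff $\mu(l(a))=\mu(a)_r$, which is the first equation of \eqref{eq:IM1}; likewise for $\widehat{\nu}$ and the second equation. Next, $\widehat{\mu}$ is holomorphic as a map between holomorphic vector bundles iff it sends local holomorphic sections to local holomorphic sections: the former are those $a\in\Gamma(A)$ satisfying $D^{\mathrm{hol}}_X(a)=0$ for all $X$ (Example~\ref{ex:hol_str}), while the latter correspond to real $(p-1)$-forms $\beta \in \Omega^{p-1}_r(M)$ with $D^{r,*}_X(\beta)=0$ for all $X$ (by Proposition~\ref{prop:hol_dif} applied with $p$ replaced by $p-1$ to $\beta-\ii\,\beta_r$). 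The required implication is precisely the first equation of \eqref{eq:IM2}, $D^{r,*}_X(\mu(a)) = \mu(D^{\mathrm{hol}}_X(a))$; the second equation similarly encodes holomorphicity of $\widehat{\nu}$.

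Third, I would verify that, under the correspondence just established, the real IM equations \eqref{eqmubr} for $(\mu,\nu)$ are equivalent to the holomorphic IM equations \eqref{eq:hol_IM_eqs} for $(\widehat{\mu},\widehat{\nu})$. Since \eqref{eq:hol_IM_eqs} only need to be checked on holomorphic sections $u=\Psi(a)$, $v=\Psi(b)$ (with $D^{\mathrm{hol}}a=D^{\mathrm{hol}}b=0$), and since $[a,b]$ is then again holomorphic by \eqref{eq:IM_1_1}, I would substitute $\widehat{\mu}(\Psi(c))=\mu(c)-\ii\,\mu(c)_r$, $\widehat{\rho}(\Psi(c))=\tfrac{1}{2}(\rho(c)-\ii\,r(\rho(c)))$, and $\partial\widehat{\mu}(u)=d\widehat{\mu}(u)$ (valid since $\widehat{\mu}(u)$ is holomorphic) into each identity of \eqref{eq:hol_IM_eqs} and split into real and imaginary parts, using the identities $\mu(l(c))=\mu(c)_r$, $\rho(l(c))=r(\rho(c))$, and the bracket relations $l([a,b])=[a,l(b)]=[l(a),b]$ and $[l(a),l(b)]=-[a,b]$ (valid for holomorphic $a,b$). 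The resulting real equations match combinations of \eqref{eqmubr} applied to $(a,b)$; conversely, taking real parts of \eqref{eq:hol_IM_eqs} gives \eqref{eqmubr} on holomorphic sections, and the $C^\infty(M)$-linearity of \eqref{eqmubr} together with the existence of local holomorphic $\C$-frames of $A$ extends these to all smooth sections. The hard part is precisely this bookkeeping: for each of the three equations in \eqref{eq:hol_IM_eqs}, carefully matching its real and imaginary parts with the appropriate instances of the real IM equations applied to holomorphic sections.
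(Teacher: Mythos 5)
Your proposal is correct and follows essentially the same route as the paper's proof: the same reduction of $\C$-linearity, type, and holomorphicity of the bundle maps to \eqref{eq:IM1}--\eqref{eq:IM2}, and the same strategy of checking the IM equations on holomorphic sections by splitting into real and imaginary parts and then extending by $C^\infty(M)$-(bi)linearity using local holomorphic frames. The remaining ``bookkeeping'' you defer is exactly the computation the paper carries out with its $\Upsilon = \Upsilon_0 - \ii\,\Upsilon_1$ decomposition.
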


\begin{proof}
%\begin{prop}\label{prop:hol_A_to_(p,0)}
We start with some general observations. Let $\widehat{\gamma}: \A \to \wedge^{(p,0)} T^*M$ be a $\C$-linear vector-bundle map, and recall that the complex vector bundle underlying $\A$ is $(A,l)$. 
Writing $\widehat{\gamma} = \gamma + \ii \gamma_1$, for $\gamma,\,\gamma_1: A \to \wedge^p T^*M$, the $\C$-linearity of $\widehat{\gamma}$ is equivalent to $\gamma_1 = -\gamma \circ l$, while the fact that $\widehat{\gamma}$ takes values in $\wedge^{(p,0)} T^*M$ is equivalent to $\gamma(l(a)) = (\gamma(a))_r$. We also note that $\widehat{\gamma}$ is holomorphic
if and only if 
$$
\gamma(D^{\mathrm{hol}}_X(a)) = D^{r,*}_X(\gamma(a)).
$$
Indeed, $\widehat{\gamma}$ is holomorphic if and only if it takes holomorphic sections to holomorphic sections, which by Proposition~\ref{prop:hol_dif} is equivalent to 
$D^{r,*}_X(\gamma(a)) = 0$ for all $ X \in \frakx(M)$ and $a$ (local) holomorphic section of $A$. Since a (local) section $a$ is holomorphic if and only if $D^{\mathrm{hol}}(a)=0$, we conclude that $\gamma$ is holomorphic if and only if
\begin{equation}\label{eq:gammaD}
\gamma(D^{\mathrm{hol}}_X(a)) - D^{r,*}_X(\gamma(a)) = 0
\end{equation}
for all $X \in \frakx(M)$ and $a$ (local) holomorphic section.
Now note that the map $a\mapsto \gamma(D^{\mathrm{hol}}_X(a)) - D^{r,*}_X(\gamma(a))$ is $C^\infty(M)$-linear, so \eqref{eq:gammaD}
holds for all smooth sections $a$ since $\Gamma(A)$ is locally generated by holomorphic sections as a $C^\infty(M)$-module.

It immediately follows  that any holomorphic IM $p$-form $(\widehat{\mu}, \widehat{\nu})$ is determined via \eqref{eq:IM_real_part} by a pair of vector bundle maps $\mu:A \to \wedge^{p-1} T^*M$, $\nu: A \to \wedge^p T^*M$ satisfying \eqref{eq:IM1} and \eqref{eq:IM2}. So it remains to prove that the pair $(\widehat{\mu}, \widehat{\nu})$ satisfies the holomorphic IM-equations \eqref{eq:hol_IM_eqs} if and only if $(\mu, \nu)$ satisfies the IM-equations \eqref{eqmubr}. For $a$ and $b$ (local) holomorphic sections of $A$, define
$$
\Upsilon(a,b) := \widehat{\mu}([a, b]) - \Lie_{\widehat{\rho}(a)} \widehat{\mu}(b) + i_{\widehat{\rho}(b)} \left(\partial \widehat{\mu}(a)+\widehat{\nu}(a)\right). 
$$
From \eqref{eq:hol_anchor} and \eqref{eq:IM_real_part}, it follows that $\Upsilon(a,b) = \Upsilon_0(a,b) - \ii \Upsilon_1(a,b)$, where
\begin{align*}
\Upsilon_0(a,b)  = &  \mu([a,b]) -  \frac{1}{2}\left(\Lie_{\rho(a)}\mu(b) - i_{\rho(b)}(d\mu(a)+ \nu(b))\,\right)\\
&  +\frac{1}{2}\left(\Lie_{r(\rho(a))}\mu(l(b)) + i_{r(\rho(b))}(d\mu(l(a)) + \nu(l(a))) \,\right),\\
\Upsilon_1(a,b)  = &  \mu(l([a,b]) + \frac{1}{2}\left(\Lie_{r(\rho(a))}\mu(b) - i_{r(\rho(b))}(d\mu(a)+ \nu(a)) \,\right)\\
&   + \frac{1}{2}\left(\Lie_{\rho(a)}\mu(l(b)) - i_{\rho(b)} (d\mu(l(a))+ \nu(l(a)))\,\right). 
\end{align*}
Since $D^{r,*}(\mu(a)) = \mu(D^{\mathrm{hol}}(a)) = 0$ for $a$ holomorphic, we have that
$d(\mu(l(a))) = d(\mu(a)_r) = (d\mu(a))_r$, which implies that
\begin{align*}
&i_{r(\rho(b))} d\mu(l(a)) = - i_{\rho(b)}d\mu(a),\\
&\Lie_{r(\rho(a))} \mu(l(b)) = - \Lie_{\rho(a)}\mu(b).
\end{align*}
Hence, since $\nu(l(a)) = \nu(a)_r$, we obtain that
$$
\Upsilon_0(a,b) = \mu([a,b]) - \Lie_{\rho(a)}\mu(b) + i_{\rho(b)}(d\mu(a)+\nu(a)).
$$
By a similar argument, and using the fact that $
l([a,b]) = [a,l(b)]$ for $a$, $b$ holomoprhic (as a consequence of \eqref{eq:IM_1_1}), one has that
$$
\Upsilon_1(a,b) =  \mu([a,l(b)]) - \Lie_{\rho(a)}\mu(l(b)) + i_{\rho(l(b))}(d\mu(a) + \nu(a)).
$$
It follows that $\Upsilon(a,b) = 0$ for all $a$, $b$ (local) holomorphic sections if and only if
$$ 
\mu([a,b]) = \Lie_{\rho(a)}\mu(b) - i_{\rho(b)}(d\mu(a) + \nu(a)),\,\,\;\; \forall \, a,b \in \Gamma(A),
$$
since $\Upsilon$ is $C^{\infty}(M)$-bilinear as a function of $a, \, b$ and $\Gamma(A)$ is locally generated by holomorphic sections as a $C^\infty(M)$-module. The analogous result for $\widehat{\nu}$ is proven similarly.
\end{proof}

\subsubsection{Proof of Theorem~\ref{thm:hol_dif}}\label{subsubsec:proof}
%We are now ready to prove Theorem \ref{thm:hol_dif}.

%\begin{proof}[Proof of Theorem \ref{thm:hol_dif}]
Write a holomorphic  $p$-form $\Omega$ on $(\G, J)$ as $\Omega = \omega - \ii \omega_J$, with $(\omega, J)$ a compatible pair, see Proposition \ref{prop:hol_dif}. Notice that $\Omega$ is multiplicative if and only if $\omega$ is multiplicative, and assume that this holds. Let 
$(\mu,\nu)$ be the IM-form corresponding to $\omega$ via \eqref{eq:im_form}. For a local holomorphic section
$a$ of $A_\G$, we have
\begin{align}
\label{eq:hol_IM_rel_1}i_{\overrightarrow{\Psi(a)}}\Omega & = i_{\overrightarrow{a}}\omega- \ii i_{\overrightarrow{l(a)}}\omega = t^*(\mu(a) - \ii \mu(l(a))),\\
\label{eq:hol_IM_rel_2}i_{\overrightarrow{\Psi(a)}}\partial\Omega & =  i_{\overrightarrow{a}}d\omega -\ii i_{\overrightarrow{a}} d(\omega_J) = t^*(\nu(a) - \ii \nu(l(a))),
\end{align}
where we have used that $J(\overrightarrow{a})=\overrightarrow{l(a)}$ and the compatibility $d(\omega_J) = (d\omega)_J$. Define $(\widehat{\mu},\widehat{\nu})$ by \eqref{eq:IM_real_part}. It is clear that \eqref{eq:hol_IM_rel} holds for $(\widehat{\mu},\widehat{\nu})$ and it is a holomorphic IM $p$-form as consequence of the compatibility of $(\omega, J)$ via Theorem \ref{thm:compatibility_G}  and Proposition \ref{prop:hol_IM_equations}. Conversely, given a holomorphic IM $p$-form $(\widehat{\mu},\widehat{\nu})$, write it as in \eqref{eq:IM_real_part} for a real  IM $p$-form $(\mu,\nu)$. When $\G$ is source 1-connected, there is a real multiplicative $p$-form $\omega$ integrating $(\mu, \nu)$. From Theorem \ref{thm:compatibility_G} and Lemma~\ref{lem:hol_str}, one has that $(\omega, J)$ is compatible, so
 $\Omega = \omega - \ii \omega_J$ is a holomorphic $p$-form by Proposition~\ref{prop:hol_dif}. Moreover, it follows from \eqref{eq:hol_IM_rel_1} and \eqref{eq:hol_IM_rel_2}
that \eqref{eq:hol_IM_rel} holds.

\begin{rmk}\label{rem:realparts}\em
Note that a central fact shown in the proof of Theorem \ref{thm:hol_dif} is that a multiplicative holomorphic $p$-form $\Omega \in \Omega^{(p,0)}(\G)$ integrates a holomorphic IM $p$-form $(\widehat{\mu}, \widehat{\nu})$  if and only if $\omega$, the real part of $\Omega$, integrates $(\mu, \nu)$, the real part of $(\widehat{\mu}, \widehat{\nu})$. 
\hfill $\diamond$
\end{rmk}

%%%%%%%%%%%%%%%%%%%%%%%%%%%%%%%%%%%%%%%%%

%%%%%%%%%%%%%%%%%%%%%%%%%%%%%%%%%%%%%%%%%%%%%%%%%%
\section{Integration of Dirac-Nijenhuis structures}
\label{sec:int_dirac}
In this section we refine the inifinitesimal-global correspondence between Dirac structures and presymplectic groupoids to include compatible with $(1,1)$-tensor fields. In this way we obtain the global counterparts of Dirac-Nijenhuis structures and, as a particular case, the integration of holomorphic Dirac structures (see  \cite{SXC,SX} for the special cases of Poisson (quasi-) Nijenhuis and holomorphic Poisson structures).

\subsection{Dirac structures and presymplectic groupoids}\label{subsec:presymp}

We now briefly recall the integration of Dirac structures to presymplectic groupoids \cite{bcwz}, as an application of the correspondence between multiplicative and IM-forms in \cite{HA}. 

The starting point is the observation that Dirac structures can be identified with special types of IM 2-forms. %\begin{ex}[Dirac structures as IM 2-forms]\label{exam:dirac_IM}\em
Any Dirac structure $L \subset \T M$ is naturally a Lie algebroid with respect to the restriction of the Courant bracket $\Cour{\cdot,\cdot}$ to $\Gamma(L)$ and anchor $\rho = \pr_T|_L: L \to TM$. One can then directly verify that the  projection $\pr_{T^*}|_L: L \to T^*M$  defines a closed IM 2-form on $L$. Conversely, the closed IM 2-forms that arise in this way have a simple characterization.
Let $\mu: A \to T^*M$ be a closed IM 2-form on a Lie algebroid $A$ such that
\begin{equation}\label{eq:transv_condition}
\begin{cases}
\mathrm{rank}(A)= \dim(M),\\ 
\ker(\rho) \cap \ker(\mu) = \{0\}.
\end{cases}
\end{equation}
A consequence of the IM-equations \eqref{eqmubr} (with $\nu=0$) is that the image of the map 
$(\rho, \mu): A \to \T M$, 
\begin{equation}\label{eq:inducedL}
L:=\mathrm{Im}(\rho,\mu),
\end{equation}
is a Dirac structure. The Lie algebroid $A$ is identified with $L$ under this map, in such a way that $\mu=\mathrm{pr}_{T^*}|_L$.
%\end{ex}

A {\em presymplectic groupoid} is a Lie groupoid $\G\toto M$ equipped with a closed multiplicative 2-form $\omega$ such that 
\begin{equation}\label{Kercond}
\begin{cases}
 \dim(\G) = 2 \dim(M)\\
    \ker(Ts)_x\cap \ker(Tt)_x\cap \ker(\omega)_x=\{0\}, \,\,\, \forall \, x\in M. 
\end{cases}
\end{equation}

It is shown in \cite[Sec.~4]{HA} that conditions \eqref{Kercond} on $\G$ and $\omega$ are equivalent to conditions
\eqref{eq:transv_condition} for the corresponding Lie algebroid $A$ and  closed IM 2-form $\mu: A\to T^*M$. 
It follows that if $(\G,\omega)$ is a presymplectic groupoid, then $M$ inherits a Dirac structure $L$, as in \eqref{eq:inducedL}, together with an induced isomorphism of Lie algebroids between $L$ and $A$. Moreover, the first equation in \eqref{eq:im_form} implies that $L$ is uniquely determined by the fact that $t: \G\to M$ is a forward Dirac map. When this happens, we say that the presymplectic groupoid $(\G,\omega)$ {\em integrates} the Dirac structure $L$.

The opposite direction, going from Dirac structures to presymplectic groupoids,  follows from the integration of IM-forms (see $\S$ \ref{subsec:multiplicative}): given a Dirac structure on $L$ that is integrable as a Lie algebroid and its source 1-connected integration $\G$, 
the integration of the IM 2-form $\mu=\pr_{T^*}|_L$ defines a
unique (up to isomorphism) closed multiplicative 2-form $\omega$ such that $(\G,\omega)$ is a presymplectic groupoid integrating $L$.

%%%%%%%%%%%%%%%%%%%%%%%%%%%%%%%%%%%%%%%%%%%%%%%%%%%%%%%%%

\subsection{Integration of Dirac structures with compatible $(1,1)$-tensor fields}

We will now relate compatible $(1,1)$-tensor fields on presymplectic groupoids and on their associated Dirac manifolds.

\begin{lem}\label{lem:rIM}
Let $L$ be a Dirac structure on $M$, $r\in \Omega^1(M,TM)$,
so that the pair $(L,r)$ is compatible. Then the triple $(\D^r|_{\Gamma(L)}, (r,r^*)|_L, r)$ is an IM $(1,1)$-tensor on the Lie algebroid  $L$. Moreover, if $r$ is Nijenhuis, then this IM $(1,1)$-tensor satisfies the infinitesimal Nijenhuis equations \eqref{eq:im_nijenhuis}.
\end{lem}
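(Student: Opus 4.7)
The plan is to verify, in turn, the Leibniz rule and each of the four IM-equations in \eqref{eq:IM_1_1} for the triple $(\D^r|_{\Gamma(L)}, (r,r^*)|_L, r)$ on the Lie algebroid $L$, and then the three Nijenhuis equations \eqref{eq:im_nijenhuis} under the additional hypothesis $\N_r=0$. The compatibility of $(L,r)$ guarantees that $l:=(r,r^*)|_L$ and $D:=\D^r|_{\Gamma(L)}$ preserve $L$, and the identity \eqref{eq:leibniz_D} restricts to the 1-derivation Leibniz rule on $L$. The first two IM-equations, $r\circ\rho=\rho\circ l$ and $\rho\circ D_X = D_X^r\circ\rho$, are immediate from $\rho=\pr_T|_L$ together with the componentwise form of $l$ and $\D^r=(D^r,D^{r,*})$.

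The core work is verifying the two bracket-compatibility IM-equations. For the third one, $l([a,b]) = [a,l(b)] - D_{\rho(b)}(a)$, I would fix $a=(X,\alpha)$ and $b=(Y,\beta)$ in $\Gamma(L)$ and compare both sides componentwise using the Courant-Dorfman bracket. The $TM$-component reduces to the definitional identity $D^r_Y(X) = [X,r(Y)] - r[X,Y]$. For the $T^*M$-component, Cartan's formula converts the combination $-i_{r(Y)}d\alpha - \Lie_Y(r^*\alpha) + \Lie_{r(Y)}\alpha$ into $-i_Y d(r^*\alpha)$, and the residual discrepancy evaluated on an arbitrary $Z\in\mathfrak{X}(M)$ simplifies, via the duality \eqref{eq:dualD} between $D^r$ and $D^{r,*}$, to precisely the pairing $\<\D^r_Z(a),b\>$. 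This vanishes because $L$ is lagrangian and $\D^r_Z(a)\in\Gamma(L)$ by compatibility. The fourth IM-equation, expressing that $D_X$ is a derivation of the Courant bracket up to the correction terms $D_{[\rho(b),X]}(a) - D_{[\rho(a),X]}(b)$, is treated in parallel: its $TM$-component is a purely formal identity in which every term involving $r$ applied to an iterated bracket cancels, collapsing to the Jacobi identities $[[X_1,X_2],r(X)] - r[[X_1,X_2],X]$ on both sides; its $T^*M$-component, while computationally longer, follows the same pattern, with all surviving terms reducing via Cartan calculus and \eqref{eq:dualD} to pairings of sections of $L$ with elements of $\D^r_{(\cdot)}(\Gamma(L))\subseteq\Gamma(L)$, which vanish by the lagrangian property.

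For the Nijenhuis part, the first equation is the hypothesis $\N_r=0$. The second equation $l\circ D_X = D_X\circ l$ on $\Gamma(L)$ splits componentwise into $r\circ D^r_X = D^r_X\circ r$ and $r^*\circ D^{r,*}_X = D^{r,*}_X\circ r^*$, which are exactly \eqref{Nij_D} and \eqref{eq:dualNij} with $\N_r=0$. The third equation $D^2_{(X,Y)}(a)=0$ is checked componentwise by a direct expansion that, as with the $TM$-component of IM-equation (iv), collects brackets via Jacobi and then eliminates the residual terms by $\N_r=0$. The principal obstacle is the bookkeeping for the $T^*M$-component of IM-equation (iv), but its structure is entirely dictated by the simpler third IM-equation: every surviving term is either a Cartan-calculus cancellation or a pairing $\<\D^r_Z(a),b\>$ that vanishes by the Dirac property of $L$.
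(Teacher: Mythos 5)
Your proposal is correct and follows essentially the same route as the paper: both verify the four IM-equations by componentwise Courant-bracket computations in which the only non-formal discrepancies are the concomitant terms $\<\D^r_{(\cdot)}(\sigma_1),\sigma_2\>$ (and $i_Z d$ of them, for the fourth equation), which vanish on $\Gamma(L)$ by compatibility and the lagrangian property, and both reduce the Nijenhuis equations to \eqref{Nij_D}, \eqref{eq:dualNij} and a Jacobi-identity computation. The paper merely packages the same computation as four general identities on all of $\T M$ with explicit correction terms before restricting to $L$, which is an organizational rather than a mathematical difference.
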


\begin{proof}
The Leibnz-type identity for $(\D^r|_{\Gamma(L)}, (r,r^*)|_L, r)$ follows from \eqref{eq:leibniz_D}. The IM-equations  will be a consequence of the following general equations relating $(\D^r, (r,r^*), r)$ with the Courant-algebroid structure on $\T M$:
\begin{align}
\label{eq:Cour_im_1} \pr_T \circ (r,r^*)   = & r \circ \pr_T\\
\label{eq:Cour_im_2} \pr_T(\D^r_X(\sigma_1))  = & D^{r}_X(\pr_T(\sigma_1))\\ 
\label{eq:Cour_im_3}(r,r^*)(\Cour{\sigma_1,\sigma_2})  = & \Cour{\sigma_1, (r,r^*)(\sigma_2)} - \D^r_{\pr_T(\sigma_2)}(\sigma_1) - C^r(\sigma_1,\sigma_2)\\
\label{eq:Cour_im_4} \D^r_Z(\Cour{\sigma_1, \sigma_2}) = & \Cour{\sigma_1, \D^r_Z(\sigma_2)} - \Cour{\sigma_2, \D^r_Z(\sigma_1)} + \D^r_{[\pr_T(\sigma_2), Z]}(\sigma_1) \\
\nonumber &  - \D^r_{[\pr_T(\sigma_1),Z]}(\sigma_2) - i_Z dC^r(\sigma_1,\sigma_2)
\end{align}
where $Z \in \frakx(M)$, $\sigma_1, \, \sigma_2 \in \Gamma(\T M)$, and $C^r(\sigma_1, \sigma_2) = \<\D^r_{(\cdot)}(\sigma_1), \sigma_2\> \in \Omega^1(M)$. To obtain the IM equations \eqref{eq:IM_1_1}, just observe that when $(L,r)$ is a Dirac structure, $C^r(\sigma_1, \sigma_2)=0$ for $\sigma_1, \sigma_2 \in \Gamma(L)$.

The first two equations above are straightforward. The other two can be verified as follows. Let $\sigma_1 = (X, \alpha)$, $\sigma_2 = (Y, \beta)$. Note that
 $$
 \<\beta, D_{(\cdot)}^r(X)\> = \Lie_{X} (r^*(\beta)) - r^*(\Lie_X\beta), \qquad \<D^{r,*}_{(\cdot)}(\alpha), Y\> = r^*(i_Yd\alpha)-i_Ydr^*(\alpha).
 $$
Then
 \begin{align*}
 (r,r^*)(\Cour{\sigma_1,\sigma_2}) - \Cour{\sigma_1, (r,r^*)(\sigma_2)} & = -(D^r_Y(X),\,\<\beta, D^r_{(\cdot)}(X)\> + r^*(i_Y d\alpha) - i_{r(Y)}d\alpha)\\
 & = -\D^r_Y(X,\alpha) - \<\beta, D^r_{(\cdot)}(X)\> - \<D^{r,*}_{(\cdot)}(\alpha), Y\>,
 \end{align*}
 which proves \eqref{eq:Cour_im_3}. To verify \eqref{eq:Cour_im_4}, define
 $$
 \Sigma(Z,\sigma_1,\sigma_2) = \mathbb{D}^r_Z(\Cour{ \sigma_1,\sigma_2})  - \Cour{\sigma_1,\mathbb{D}^r_Z(\sigma_2)} + \Cour{\sigma_2,\mathbb{D}^r_Z(\sigma_1)} - \mathbb{D}^r_{[\pr_T(\sigma_2),Z]}(\sigma_1) + \mathbb{D}^r_{[\pr_T(\sigma_1),Z]}(\sigma_2).
 $$
It follows from the Jacobi identity for the Lie bracket of vector fields that the $TM$-component of $\Sigma$ vanishes. The component on $T^*M$ can written as
$$
 \pr_{T^*}(\Sigma(Z,\sigma_1,\sigma_2)) = \Upsilon(Z, X, \beta) + \Upsilon(Z, Y, \alpha),
$$
where
\begin{align*}
 \Upsilon(Z,X,\beta) = &  \mathcal{L}_Z(r^*(\mathcal{L}_X\beta))  -\mathcal{L}_{r(Z)}\mathcal{L}_X\beta 
 -\mathcal{L}_X \mathcal{L}_Z(r^*(\beta)) + \mathcal{L}_X\mathcal{L}_{r(Z)}\beta 
   -i_{[X,r(Z)]}d\beta\\
 &  + i_{r([X,Z])}d\beta  + \mathcal{L}_{[X,Z]}(r^*(\beta)) -\mathcal{L}_{r([X,Z])}\beta,\\
\Upsilon(Z,Y,\alpha)   = & - \mathcal{L}_Z(r^*(i_Yd\alpha)) + \mathcal{L}_{r(Z)}i_Yd\alpha +i_{[Y,r(Z)]}d\alpha -i_{r([Y,Z])}d\alpha 
+\mathcal{L}_Y \mathcal{L}_Zr^*(\alpha)  \\
&   -\mathcal{L}_Y\mathcal{L}_{r(Z)}\alpha - \mathcal{L}_{[Y,Z]}r^*(\alpha) +\mathcal{L}_{r([Y,Z])}\alpha.
\end{align*}
Using Cartan calculus and \eqref{eq:dualD}, one verifies that
\begin{align*}
\Upsilon(Z,X,\beta) & =  d\<\beta, D_Z^{r}(X)\> +  \Lie_Z\underbrace{(r^*(\Lie_X \beta) - \Lie_X r^*\beta)}_{-\<\beta, D_{(\cdot)}^{r}(X)\>}  = - i_Zd\<\beta, D_{(\cdot)}^r(X)\>\\
\Upsilon(Z,Y,\alpha) & = d(\underbrace{ \Lie_Z\<\alpha, r(Z)\> - \Lie_{r(Z)}\<\alpha, Y\> - \<\alpha, D^r_Z(Y)\>}_{\<D^{r,*}_Z(\alpha), Y\>}) - \Lie_Z\underbrace{(r^*(i_Yd\alpha) - i_Ydr^*(\alpha))}_{\<D^{r,*}_{(\cdot)}(\alpha),Y\>}\\
 & = - i_Zd\<D_{(\cdot)}^{r,*}(\alpha), Y\>.
\end{align*}
This proves the first part of the lemma.

For the second assertion regarding the infinitesimal Nijenhuis equations \eqref{eq:im_nijenhuis} for the IM $(1,1)$-tensor $(\D^r|_{\Gamma(L)}, (r,r^*)|_L, r)$, first note that
\begin{align*}
 (r,r^*)(\D^r_X(Z,\gamma)) - \D^r_X((r,r^*)(Z,\gamma))  =   (\N_r(X,Z), \<\gamma, \N_r(X,\cdot)\>)
\end{align*}
for all $X, Z \in \frakx(M), \, \gamma \in \Omega^1(M)$, as a consequence of \eqref{Nij_D} and \eqref{eq:dualNij}. Similarly, one can check, using \eqref{Nij_D}, \eqref{eq:dualNij} and the Jacobi identity for the Lie bracket of vector fields, that
\begin{equation}\label{eq:Dr_square}
(\D^r)^2_{(X,Y)}(Z,\gamma) =  ((\Lie_Z \N_r)(X,Y),  i_Xi_Y d\N_r^*\gamma - i_{\N_r(X,Y)}d\gamma),
\end{equation} 
where $\N_r^*: T^*M \to \wedge^*T^*M$ is the dual of the Nijenhuis torsion and $(\D^r)^2$ is defined in \eqref{eq:im_nijenhuis}. In particular, if $\N_r=0$, then equations \eqref{eq:im_nijenhuis} hold for $(\D^r|_{\Gamma(L)}, (r,r^*)|_L, r)$.

%%%%%%%%%%%%%%%%%%%%%
 \begin{comment}
 The first equation is a direct consequence of \eqref{Nij_D} and \eqref{eq:dualNij}. Now, let $\sigma = (Z,\gamma) \in \Gamma(\T M)$. Using Cartan calculus and \eqref{Nij_D}, we obtain that ******* ?????????
 \begin{align*}
  A(\gamma) - B(\gamma) - C(\gamma)  & = \Lie_{r([X,Y]_r)- [r(X),r(Y)])} \gamma = \Lie_{\N_r(X,Y)}\gamma = 0.
 \end{align*}
To prove the same result to $D^r$, one can use \eqref{eq:dualD} to obtain the following formula: for any $\alpha \in \Omega^1(M)$,
\begin{align*}
\<\alpha, r(D_{[X,Y]}^r(Z)) - [D_X^r,D_Y^r](Z) - D_{[X,Y]_r}^r(Z)\> = & \Lie_{Y}\<\alpha, \N_r(X,Z)\> - \Lie_{X}\<\alpha, \N_r(Y,Z)\>\\
&  - \Lie_{\N_r(X,Y)}\<\alpha, Z\> - \<\alpha, \N_r([X,Y],Z)\>\\
&- \< A(\alpha)-B(\alpha)-C(\alpha),Z\>.
\end{align*}
This concludes the result.
\end{comment}
%%%%%%%%%%%%%%%%%%%%%%%%%%%%%%%%%%%%%%%%
\end{proof}

\begin{rmk}[Courant-Nijenhuis algebroids]\em Equations \eqref{eq:Cour_im_1}--\eqref{eq:Cour_im_4} make sense for arbitrary Courant algebroids endowed with a self-dual 1-derivation (in place of $(\D^r, (r,r^*), r)$) and lead to  the general notion of  {\em Courant-Nijenhuis algebroid}, studied in \cite{BDC} (which arise, in particular, as ``doubles'' of the Lie-Nijenhuis bialgebroids  in \cite{T}).
\hfill $\diamond$
\end{rmk}

We are now ready to show the infinitesimal-global correspondence of presymplectic-Nijenhuis groupoids and Dirac-Nijenhuis structures.

\begin{thm}\label{thm:main_integration}
Let $(\G,\omega)$ be a presymplectic groupoid integrating a Dirac structure $L$ on $M$.
\begin{itemize}
    \item[(a)]  Let $K\in \Omega^1(\G,T\G)$ be multiplicative, and let $r=K|_{TM}: TM \to TM$.  
    If the pair $(\omega, K)$ is compatible (resp. presymplectic-Nijenhuis) then the pair $(L,r)$ is compatible (resp. Dirac-Nijenhuis). % Moreover, if $(\omega,K)$ is presymplectic-Nijenhuis, then $(L,r)$ is Dirac-Nijenhuis. 
    \item [(b)] If $\G$ is source 1-connected, the correspondence $K \mapsto r=K|_{TM}$ is a bijection between multiplicative $K\in \Omega^1(\G,T\G)$ compatible with $\omega$ and $r\in \Omega^1(M,TM)$ compatible with $L$ (moreover, $K$ is Nijenhuis if and only if so is $r$).
\end{itemize}

\end{thm}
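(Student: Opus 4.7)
The plan is to translate both sides to the infinitesimal level via Theorem \ref{thm:compatibility_G} and the identification from $\S$~\ref{subsec:presymp} of Dirac structures with closed IM 2-forms. Since $(\G, \omega)$ integrates $L$, the Lie algebroid of $\G$ is $L$ itself, with anchor $\rho = \pr_T|_L$, and $\omega$ corresponds to the closed IM 2-form $(\mu,0) = (\pr_{T^*}|_L, 0)$ on $L$. On the other hand, by \cite[Thm.~3.19]{HT}, multiplicative $K \in \Omega^1(\G, T\G)$ correspond bijectively (when $\G$ is source 1-connected) to IM $(1,1)$-tensors $(D, l, r)$ on $L$, with $r = K|_{TM}$.

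For part (a), let $(D, l, r)$ be the IM $(1,1)$-tensor attached to a multiplicative $K$ compatible with $\omega$. By Theorem \ref{thm:compatibility_G}, for all $\sigma \in \Gamma(L)$ and $X \in \mathfrak{X}(M)$ we have $\mu(l(\sigma)) = r^*(\mu(\sigma))$ and $\mu(D_X(\sigma)) = D^{r,*}_X(\mu(\sigma))$. Combining these with the first two IM-equations of \eqref{eq:IM_1_1}, namely $r \circ \rho = \rho \circ l$ and $\rho \circ D_X = D^r_X \circ \rho$, a direct computation on any $\sigma = (Y, \alpha) \in \Gamma(L)$ yields
$$
l(\sigma) = (r(Y), r^*(\alpha)) = (r,r^*)(\sigma), \qquad D_X(\sigma) = (D^r_X(Y), D^{r,*}_X(\alpha)) = \D^r_X(\sigma).
$$
Hence the properties $l(L) \subseteq L$ and $D_X(\Gamma(L)) \subseteq \Gamma(L)$, built into the notion of IM $(1,1)$-tensor, translate into $(r, r^*)(L) \subseteq L$ and $\D^r_X(\Gamma(L)) \subseteq \Gamma(L)$, which is exactly the compatibility of $(L, r)$.

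For part (b), given a compatible pair $(L, r)$, Lemma \ref{lem:rIM} already provides an IM $(1,1)$-tensor $(\D^r|_{\Gamma(L)}, (r, r^*)|_L, r)$ on $L$; since $\G$ is source 1-connected, \cite[Thm.~3.19]{HT} integrates this to a unique multiplicative $K \in \Omega^1(\G, T\G)$ with $K|_{TM} = r$. Compatibility of $(\mu, 0) = (\pr_{T^*}|_L, 0)$ with this IM $(1,1)$-tensor is immediate from the defining formulas, so Theorem \ref{thm:compatibility_G} yields compatibility of $(\omega, K)$, establishing the claimed bijection. For the Nijenhuis equivalence I would appeal to \cite[$\S$~6.2]{HT}: $K$ is Nijenhuis if and only if the IM-Nijenhuis equations \eqref{eq:im_nijenhuis} hold for $(\D^r|_{\Gamma(L)}, (r, r^*)|_L, r)$. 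The first of these is exactly $\N_r = 0$, and the remaining two are forced by $\N_r = 0$ via the identities established in the proof of Lemma \ref{lem:rIM} (notably \eqref{eq:Dr_square}). I expect the main technical point to be the forced identification of the abstract $(D, l, r)$ with $(\D^r|_{\Gamma(L)}, (r, r^*)|_L, r)$ in part (a); once that is pinned down by combining the compatibility conditions of Theorem \ref{thm:compatibility_G} with the IM equations \eqref{eq:IM_1_1}, everything else reduces to reading off definitions and invoking the integration results from \cite{HA,HT}.
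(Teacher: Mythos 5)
Your proposal is correct and follows essentially the same route as the paper: both pass to the infinitesimal level via Theorem~\ref{thm:compatibility_G}, use the IM equations \eqref{eq:IM_1_1} together with the compatibility conditions \eqref{eq:IM1}--\eqref{eq:IM2} to force the IM $(1,1)$-tensor of $K$ to be $(\D^r|_{\Gamma(L)}, (r,r^*)|_L, r)$ under the identification $(\rho,\mu)\colon A \stackrel{\sim}{\to} L$, and invoke Lemma~\ref{lem:rIM} plus the integration results of \cite{HA,HT} for part (b) and the Nijenhuis equivalence. The ``forced identification'' you flag as the main technical point is precisely the paper's argument for injectivity of $K\mapsto r$, so nothing is missing.
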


\begin{proof}
Suppose that the pair $(\omega,K)$ is compatible.
Let $(D, l, r)$ be the IM $(1,1)$-tensor corresponding to $K$ and $\mu$ the closed IM 2-form of $\omega$, so that $L=(\rho,\mu)(A)$. 
It follows from the first two equalities in \eqref{eq:IM_1_1},
$$
r\circ \rho = \rho\circ l, \qquad \rho(D_X(a)) = D_X^r(\rho(a)),
$$
and the compatibility conditions \eqref{eq:IM1} and \eqref{eq:IM2} (which hold by Thm.~\ref{thm:compatibility_G}, recalling that $\nu=0$),
$$
r^*\mu(a) = \mu(l(a)),\qquad \mu(D_X(a))=D_X^{r,*}(\mu(a)),
$$
 that the pair $(L,r)$ is compatible. Moreover, if $K$ is Nijenhuis then so is $r$ (see \eqref{eq:im_nijenhuis}). This proves part (a). Note also that, under the Lie-algebroid isomorphism $(\rho,\mu): A\stackrel{\sim}{\to} L$, the IM $(1,1)$-tensor $(D,l,r)$ is identified with $(\D^ r|_{\Gamma(L)}, (r,r^*), r)$. As a consequence, if $\G$ is source connected, any $K'$ that is multiplicative, compatible with $\omega$, and satisfies  $K'|_{TM}=r$, must agree with $K$.

Suppose now that $(\G,\omega)$ is a source 1-connected integration of $L$.
Let $r\in \Omega^ 1(M,TM)$ be compatible with $L$. By Lemma~ \ref{lem:rIM},
$(\D^ r|_{\Gamma(L)}, (r,r^*), r)$ is an IM $(1,1)$-tensor on $L$, which is easily seen to be compatible with the closed IM 2-form $\pr_{T^*}|_L$.
It follows that the IM $(1,1)$-tensor $(D,l,r)$ corresponding to $(\D^ r|_{\Gamma(L)}, (r,r^*), r)$ through the identification $(\rho,\mu): A\stackrel{\sim}{\to} L$ is compatible with $\mu$. Hence $K\in \Omega^ 1(\G,T\G)$ obtained by integration of $(D,l,r)$ is compatible with $\omega$
by Thm.~ \ref{thm:compatibility_G}. Such $K$ satisfies $K|_{TM}=r$ and, as argued above, any multiplicative $(1,1)$-tensor field on $\G$ compatible with $\omega$ with this property must coincide with $K$. The fact that $K$ is Nijenhuis if $r$ is follows from the second assertion in Lemma~\ref{lem:rIM}.

\end{proof}

When (a) holds, we say that $(\G, \omega, K)$ {\em integrates} $(M, L, r)$. 

\begin{rmk}[The quasi-Nijenhuis version]\label{rem:quasiint}\em
The correspondence in Theorem \ref{thm:main_integration} has the following version for quasi-Nijenhuis structures (see Remark~\ref{rem:quasi}).
$$
(L,r,\phi) \text{ is Dirac quasi-Nijenhuis } 
\Longleftrightarrow \omega(\cdot, \N_K(\cdot, \cdot)) = s^*\phi - t^*\phi.
$$
Indeed, by considering 
$
\tau(U,V,W) = \omega(U, \N_K(V,W))
$
and its infinitesimal description, one can show that $\tau$ is skew-symmetric (hence a 3-form) with corresponding IM 3-form $\tilde{\mu}: A \to \wedge^2 T^*M$, $\tilde{\nu}: A \to \wedge^3 T^*M$ on $A$ given by
\begin{align*}
\tilde{\mu}(a) & = \N_r^*(\mu(a))\\
\tilde{\nu}(a)|_{(X,Y,Z)} & = d\mu(a)(X, \N_r(Y, Z)) - \<\mu(D^2_{(Y,Z)}(a)), X\> - d(\N_r^*\mu(a))(X,Y,Z).
\end{align*}
Since the pair $(\omega,K)$ is compatible, it follows from Theorem \ref{thm:compatibility_G} and \eqref{eq:Dr_square} that
$$
\mu(D^2_{(Y,Z)}(a)) = (D^{r,*})^2_{(Y,Z)}(\mu(a)) = i_Yi_Zd(\N_r^*\mu(a)) - i_{\N_r(Y,Z)}d\mu(a),  
$$
which implies that $\tilde{\nu}=0$.
Also, the quasi-Nijenshui condition \eqref{eq:quasi} is equivalent to 
$
\tilde{\mu}(a) = - i_{\rho(a)}\phi,
$
which is the closed IM 3-form corresponding to $s^*\phi - t^*\phi$.
\end{rmk}

In conclusion, we obtain a natural bijective correspondence between source 1-connected presymplectic-Nijenhuis  groupoids and  Dirac-Nijenhuis manifolds (which are integrable as Lie algebroids), 
$$
(\G,\omega, K)\;\; \leftrightharpoons\;\; (M, L, r),
$$
where $L$ is the forward image of $\omega$ by $t$ and $r$ is $t$-related with $K$ (and the same holds for the quasi-Nijenhuis condition).
This restricts to a correspondence between symplectic (quasi-)Nijenhuis groupoids and Poisson (quasi-)Nijenhuis structures, recovering \cite[Thm.~5.2]{SX}.

We have the following simple relation between integration of Dirac-Nijenhuis structures and the ``$(0,n)$'' hierarchies described in $\S$ \ref{subsec:hierarchy}.

\begin{cor}\label{cor:(0,1)_integration}
Suppose that $(\G, \omega, K)$ is a presymplectic-Nijenhuis groupoid integrating the Dirac-Nijenhuis structure $(L, r)$. 
If $\ker(\id_{TM}, r^*)|_L)=0$, then for each $n=1,2,\ldots$  $(\G,\omega_n)$ is a presymplectic groupoid integrating the Dirac structure $L_{(0,n)}$.
\end{cor}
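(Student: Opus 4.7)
The plan is to verify that, for each $n$, $\omega_n = \omega_{K^n}$ (defined by $\omega_n^\flat = \omega^\flat \circ K^n$, as in \eqref{eq:omegahier}) is a closed multiplicative $2$-form on $\G$ whose associated IM $2$-form satisfies the non-degeneracy condition \eqref{eq:transv_condition}, and then to identify the Dirac structure it integrates, via \eqref{eq:inducedL}, with $L_{(0,n)}$.

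Multiplicativity of $\omega_n$ is immediate from multiplicativity of $K^n$. For closedness, I would apply Proposition~\ref{prop:DNhier}(2) to the Dirac-Nijenhuis structure $(L_\omega, K)$ on $\G$, where $L_\omega = \mathrm{graph}(\omega^\flat)$ is Dirac-Nijenhuis by Example~\ref{ex:OmegaN}: the required condition $\ker(\mathrm{id}_{T\G}, K^*)|_{L_\omega} = 0$ holds vacuously since $L_\omega \cap T^*\G = 0$, so $(L_\omega)_{(0,n)} = \mathrm{graph}(\omega^\flat \circ K^n) = \mathrm{graph}(\omega_n^\flat)$ is a Dirac structure, i.e., $\omega_n$ is closed. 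The IM $2$-form of $\omega_n$ is then read off directly from \eqref{eq:im_form}: iterating $K(\overrightarrow{a}) = \overrightarrow{l(a)}$ gives $K^n(\overrightarrow{a}) = \overrightarrow{l^n(a)}$, whence $i_{\overrightarrow{a}}\omega_n = i_{\overrightarrow{l^n(a)}}\omega = t^*\mu(l^n(a))$, so $\mu_n := \mu \circ l^n$ (and $\nu_n = 0$).

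Next I would verify the non-degeneracy of $(\mu_n, 0)$ and identify the integrated Dirac structure. Under the Lie-algebroid isomorphism $(\rho,\mu)\colon A \xrightarrow{\sim} L$ coming from $(\G,\omega)$ integrating $L$---which, as in the proof of Theorem~\ref{thm:main_integration}, identifies $l$ with $(r,r^*)|_L$ and $\mu$ with $\pr_{T^*}|_L$---the map $\mu_n \colon L \to T^*M$ becomes $(X,\alpha) \mapsto r^{*n}(\alpha)$, while $\ker(\rho) = L \cap T^*M$. Hence
\[
\ker(\rho) \cap \ker(\mu_n) = \ker\bigl(r^{*n}|_{L\cap T^*M}\bigr),
\]
which vanishes because, by the hypothesis and Remark~\ref{rem:invert}(b), $r^*|_{L\cap T^*M}$ is invertible. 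Combined with the automatic dimension condition, this yields \eqref{eq:transv_condition} (equivalently \eqref{Kercond}), so $(\G,\omega_n)$ is a presymplectic groupoid; by \eqref{eq:inducedL} it integrates the Dirac structure $\mathrm{Im}(\rho,\mu_n) = \{(X, r^{*n}(\alpha)) : (X,\alpha) \in L\} = L_{(0,n)}$, matching \eqref{eq:hierarchy}. The main obstacle I expect is the closedness of $\omega_n$, which genuinely relies on the Nijenhuis property of $K$ and is most cleanly packaged through Proposition~\ref{prop:DNhier}(2).
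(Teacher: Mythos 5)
Your proposal is correct and follows essentially the same route as the paper: identify the IM $2$-form of $\omega_n$ as $\mu\circ l^n$ via Theorem~\ref{thm:compatibility_G}, use the hypothesis (through the injectivity of $(\id_{TM},(r^*)^n)|_L$, equivalently Remark~\ref{rem:invert}(b)) to get the transversality conditions \eqref{eq:transv_condition}, and recognize $\mathrm{Im}(\rho,\mu\circ l^n)=(\id_{TM},(r^*)^n)(L)=L_{(0,n)}$. The only difference is that you make explicit the closedness and multiplicativity of $\omega_n$ (via Proposition~\ref{prop:DNhier}(2) applied to $(L_\omega,K)$ on $\G$), which the paper leaves implicit in its appeal to Theorem~\ref{thm:compatibility_G}; this is a reasonable added detail, not a different argument.
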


\begin{proof}
The proof follows from the fact that, if $\ker(\id_{TM}, r^*)|_L)=0$, then $(\id_{TM},(r^*)^n): L \to L_{(0,n)}$ is a Lie algebroid isomorphism. Hence, as $L=(\rho, \mu)(A)$, it follows that $L_{(0,n)}= (\rho, (r^*)^n\circ \mu)(A)=(\rho, \mu\circ l^n)(A)$, which is the Dirac structure corresponding to the presymplectic groupoid $(\G,\omega_n)$.
\end{proof}

\begin{ex} \em  
Let $(\G, \omega)$ be a symplectic groupoid integrating a Poisson manifold $(M,\pi)$. Consider a closed 2-form $B \in \Omega^2(M)$ and $r=\id + \pi^\sharp\circ B^\flat: TM \to TM$, as in Example \ref{ex:gauge}. One can check that the IM $(1,1)$-form $(D^{r,*}, r^*, r)$ on the cotangent Lie algebroid $(T^*M, [\cdot,\cdot]_\pi, \pi^\sharp)$ integrates to $K=\id_{T\G} +\Pi^\sharp \circ (t^*B - s^*B)^{\flat}$. As observed in Example \ref{ex:gauge}, the gauge transformation of $\pi$ by $B$ is $L_{(0,1)}$,   and hence by  Corollary \ref{cor:(0,1)_integration} it  integrates to $(\G, \omega_1 = \omega+ t^*B-s^*B)$, thus recovering  \cite[Thm.~4.1]{BurRadko}. When $r$ is invertible, $K$ is also invertible and $\omega_1$ is the symplectic form integrating the Poisson structure resulting from the gauge transformation of $\pi$ by $B$.
\end{ex}

%We will use Theorem~\ref{thm:main_integration} to integrate holomorphic Dirac structures in the next section.

%%%%%%%%%%%%%%%%%%%%%%%%%%%%%%%%%%%%%%%%%%%%%%%%%%%%%
\subsection{Integration of holomorphic Dirac structures}

We now introduce the global counterparts of holomorphic Dirac structures.

\begin{defn}\label{def:holgrp}
A \textit{holomorphic presymplectic groupoid} is a holomorphic Lie groupoid $\G \toto M$ endowed with a multiplicative holomorphic $2$-form $\Omega$ such that $\partial \Omega =0$,  $\dim(\G) = 2 \dim(M)$, and
\begin{equation}\label{eq:kernel_intersection}
\ker(\Omega)_x \cap \ker(Ts)_x \cap \ker(Tt)_x  = \{0\},\qquad \forall \, x \in M.
\end{equation}
\end{defn}

The intersection condition  \eqref{eq:kernel_intersection} takes place in $T^{1,0}\G |_M$. We denote by $\widehat{\mu}: \A \to (T^{1,0}M)^*$ the holomorphic IM 2-form corresponding to $\Omega$, $\widehat{\mu}(u) = (i_u\Omega)|_{T^{1,0}M}$ (note that $\widehat{\nu}=0$ since $\partial \Omega=0$).

We will prove the infinitesimal-global correspondence relating holomorphic Dirac structures and holomorphic presymplectic groupoids as a consequence of Theorem~\ref{thm:hol_dif}, though it can also be derived from Theorem~\ref{thm:main_integration} by working with real parts (see comments after Thm.~\ref{thm:integholDirac} below).

Just as in the real smooth case, holomorphic presymplectic groupoids give rise to holomorphic Dirac structures on their spaces of units.

\begin{thm}\label{thm:diffpresgrp}
Let $(\G, \Omega) \toto M$ be a holomorphic presymplectic groupoid. (1) There exists a unique holomorphic Dirac structure $\mathcal{L} \subset \T^{1,0} M$ for which $t: \G \to M$ is a forward Dirac map. (2) The map $(\widehat{\rho},\widehat{\mu}): \A \to  \T^{1,0}M$ induces  an isomorphism of Lie algebroids $\A \stackrel{\sim}{\to} \mathcal{L}$.
\end{thm}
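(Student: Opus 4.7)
The plan is to deduce the theorem from Theorem \ref{thm:main_integration} applied to the underlying real smooth structures, combined with the dictionary between holomorphic Dirac structures and real Dirac--Nijenhuis structures from Proposition \ref{prop:hol_dirac}. Let $J$ denote the multiplicative complex structure on $\G$ that makes it a holomorphic Lie groupoid (see $\S$\ref{subsec:real}). By Proposition \ref{prop:hol_dif}, the holomorphic form $\Omega$ decomposes as $\Omega = \omega - \ii\, \omega_J$, where $\omega := \mathrm{Re}(\Omega)$ is a real $2$-form and the pair $(\omega, J)$ is compatible in the sense of Definition \ref{dfn:nij_form}. Multiplicativity of $\omega$ follows from that of $\Omega$ by taking real parts, and closedness of $\omega$ comes from $d\Omega = \partial\Omega + \bar{\partial}\Omega = 0$, using $\partial\Omega = 0$ by assumption and $\bar{\partial}\Omega = 0$ by holomorphicity.

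Next I would verify that $(\G, \omega)$ is a real presymplectic groupoid in the sense of $\S$\ref{subsec:presymp}. The dimension condition is just the real form of $\dim_{\C}(\G) = \dim_{\C}(M)$. For the kernel condition \eqref{Kercond}, I would use the real linear isomorphism $T\G \to T^{1,0}\G$, $U \mapsto \tfrac{1}{2}(U - \ii J(U))$. Using the compatibility identity $\omega(J(X), Y) = \omega(X, J(Y))$ coming from skew-symmetry of $\omega_J$ (which forces $\ker(\omega^\flat)$ to be $J$-invariant), together with the fact that $Ts$ and $Tt$ intertwine $J$ with the complex structure $r$ of $M$, one checks that this isomorphism matches $\ker(\omega^\flat)$, $\ker(Ts)$ and $\ker(Tt)$ inside $T\G|_M$ with $\ker(\Omega^\flat)$, $\ker(Ts)$ and $\ker(Tt)$ inside $T^{1,0}\G|_M$. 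Hence \eqref{Kercond} is equivalent to \eqref{eq:kernel_intersection}. Applying Theorem \ref{thm:main_integration} then yields a Dirac--Nijenhuis structure $(L, r)$ on $M$ integrated by $(\G, \omega, J)$, with $r = J|_{TM}$ a complex structure (since $J^2 = -\id$) and with $L$ the forward image of $\omega$ by $t$.

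Set $\mathcal{L} := \Phi(L) \subset \T^{1,0}M$, with $\Phi$ as in \eqref{dfn:phi_iso}. By Proposition \ref{prop:hol_dirac}, $\mathcal{L}$ is a holomorphic Dirac structure. The holomorphic forward Dirac property for $t: (\G, \Omega) \to (M, \mathcal{L})$ follows from the real forward Dirac property of $L$ by applying $\Phi$ fiberwise and using the decomposition $\Omega = \omega - \ii\,\omega_J$; uniqueness is immediate from surjectivity of $t$. For part (2), I would compose the isomorphisms already at hand: $\Psi^{-1}: \mathcal{A}_\G \to A_\G$ (Lemma \ref{lem:hol_str}), the real Lie-algebroid isomorphism $(\rho, \mu): A_\G \stackrel{\sim}{\to} L$ from $\S$\ref{subsec:presymp}, and $\Phi: L \to \mathcal{L}$. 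Unwinding these at holomorphic sections, and using \eqref{eq:hol_anchor} for the anchor together with the analogous formula $\widehat{\mu}(\Psi(a)) = \mu(a) - \ii\, r^*(\mu(a))$ obtained from Proposition \ref{prop:hol_IM_equations}, one identifies the composition with $(\widehat{\rho}, \widehat{\mu})$; the Lie-algebroid structure is transported along this isomorphism.

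The main obstacle I anticipate is the careful bookkeeping required to translate the kernel condition \eqref{Kercond} (inside $T\G|_M$) into \eqref{eq:kernel_intersection} (inside $T^{1,0}\G|_M$), and to phrase the holomorphic forward Dirac condition precisely enough that it transfers cleanly under $\Phi$. Once the real--holomorphic dictionary is set up explicitly at the level of kernels, anchors, and forward images, the theorem reduces to an assembly of correspondences already established earlier in the paper.
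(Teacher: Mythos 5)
Your proof is correct, but it takes a genuinely different route from the paper's. The paper proves Theorem \ref{thm:diffpresgrp} directly in the holomorphic category, mimicking the real argument of $\S$\ref{subsec:presymp}: it identifies holomorphic Dirac structures with closed holomorphic IM $2$-forms $\widehat{\mu}$ satisfying the rank and transversality conditions \eqref{eq:holconds}, uses the multiplicativity identity $i_{\overrightarrow{u}}\Omega = t^*\widehat{\mu}(u)$ to show that \eqref{eq:kernel_intersection} is exactly the transversality condition $\ker(\widehat{\rho})\cap\ker(\widehat{\mu})=\{0\}$, and then reads off $\mathcal{L}=\mathrm{Im}(\widehat{\rho},\widehat{\mu})$ together with the forward Dirac property from $Tt(\overrightarrow{u})=\widehat{\rho}(u)$. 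You instead pass to real parts and invoke Theorem \ref{thm:main_integration} plus the dictionary of Proposition \ref{prop:hol_dirac} --- which is precisely the alternative route the authors flag (``though it can also be derived from Theorem~\ref{thm:main_integration} by working with real parts'') but do not carry out in detail. Your approach buys economy of new arguments, reusing the already-established real integration theorem and the real--holomorphic correspondences; its cost is the bookkeeping you correctly identify, in particular matching $\ker(\omega)$ with $\ker(\Omega)$ under $U\mapsto\tfrac{1}{2}(U-\ii J(U))$, which does go through because the compatibility $\omega(J\cdot,\cdot)=\omega(\cdot,J\cdot)$ makes $\ker(\omega^\flat)$ $J$-invariant and gives $i_{\Phi(U)}\Omega = i_U\omega - \ii\, i_{J(U)}\omega$, so that $\Omega^\flat\circ\Phi_T=\Phi_{T^*}\circ\omega^\flat$. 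The paper's intrinsic proof is shorter and self-contained at the holomorphic level; yours makes the ``real parts'' philosophy of the final diagram into an actual proof. Both are valid.
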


\begin{proof}
The proof is analogous to the real case.
Holomorphic Dirac structures $\mathcal{L}\subset \T^{1,0}M$ can be identified with closed holomorphic IM 2-forms $\widehat{\mu}: \mathcal{A}\to (T^{1,0}M)^*$ satisfying 
\begin{equation}\label{eq:holconds}
    \mathrm{rank}(\mathcal{A})=\dim_\C(M) , \qquad \ker(\widehat{\rho})\cap \ker(\widehat{\mu})=\{0\},
\end{equation}
(cf. $\S$ \ref{subsec:presymp}) as the image of $(\widehat{\rho},\widehat{\mu}): \mathcal{A}\to \T^{1,0} M$,
\begin{equation}\label{eq:Lhol}
\mathcal{L}=\mathrm{Im}(\widehat{\rho},\widehat{\mu}).
\end{equation}
The multiplicativity of $\Omega$ implies that
\begin{equation}\label{eq:Omegaright}
i_{\overrightarrow{u}}\Omega = t^*\widehat{\mu}(u),
\end{equation}
for any section of $\Gamma(\A)$. By using this last property, one can check that 
 \eqref{eq:kernel_intersection} boils down to the intersection condition in \eqref{eq:holconds}, so one obtains a holomorphic Dirac structure $\mathcal{L}$ as in 
 \eqref{eq:Lhol}. 
   Moreover, $(\widehat{\rho},\widehat{\mu}): \A \stackrel{\sim}{\to} \mathcal{L}$ is an isomorphism of Lie algebroids.
 
Using the identity
\begin{equation}\label{eq:rhoT}
Tt(\overrightarrow{u})=\widehat{\rho}(u),
\end{equation} 
it also follows from \eqref{eq:Omegaright} that $t: (\G,\Omega)\to (M,L)$ is a forward Dirac map. 

\end{proof}

We say that a holomorphic pre-symplectic groupoid $(\G, \Omega)$ {\em integrates} a holomorphic Dirac structure $\mathcal{L}$ if $t:(\G, \Omega) \to (M,\mathcal{L})$ is a forward Dirac map.
The next result gives the integration of holomorphic Dirac structures.

\begin{thm}\label{thm:integholDirac}
Let $\mathcal{L} \subset \T^{1,0} M$ be a holomorphic Dirac structure whose underlying holomorphic Lie algebroid is integrable, and let $\G \toto M$ be its  source 1-connected holomorphic integration. Then there exists a unique holomorphic multiplicative $2$-form $\Omega$ such that $(\G, \Omega)$ is the pre-symplectic groupoid integrating $\mathcal{L}$.  
\end{thm}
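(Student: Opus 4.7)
\emph{Proof plan.} My strategy is to transport $\mathcal{L}$ to a closed holomorphic IM 2-form on its holomorphic Lie algebroid $\mathcal{A}$, integrate it via Theorem~\ref{thm:hol_dif}, and then check the nondegeneracy of the resulting multiplicative 2-form.

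The first step uses the proof of Theorem~\ref{thm:diffpresgrp}: holomorphic Dirac structures $\mathcal{L}\subset\T^{1,0}M$ are equivalent to closed holomorphic IM 2-forms $\widehat{\mu}:\mathcal{A}\to(T^{1,0}M)^*$ (i.e.\ with $\widehat{\nu}=0$) satisfying the nondegeneracy conditions
\begin{equation*}
\mathrm{rank}_\C(\mathcal{A})=\dim_\C(M),\qquad \ker(\widehat{\rho})\cap\ker(\widehat{\mu})=\{0\},
\end{equation*}
where $\widehat{\mu}=\pr_{(T^{1,0}M)^*}|_\mathcal{L}$ under the Lie-algebroid isomorphism $\mathcal{A}\stackrel{\sim}{\to}\mathcal{L}$. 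Applying Theorem~\ref{thm:hol_dif} to the pair $(\widehat{\mu},0)$ on the source 1-connected holomorphic integration $\G$ will produce a unique multiplicative holomorphic 2-form $\Omega$ satisfying
\begin{equation*}
i_{\overrightarrow{u}}\Omega=t^*\widehat{\mu}(u),\qquad i_{\overrightarrow{u}}\partial\Omega=0,
\end{equation*}
for all local holomorphic sections $u$ of $\mathcal{A}$. Since $\partial\Omega$ is itself a multiplicative holomorphic 3-form whose associated holomorphic IM data vanishes, the injectivity part of Theorem~\ref{thm:hol_dif} forces $\partial\Omega=0$.

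The remaining step is to verify that $(\G,\Omega)$ fulfills the conditions of Definition~\ref{def:holgrp} and that $t$ is a forward Dirac map onto $(M,\mathcal{L})$. The dimension equality is immediate from $\dim_\C(\G)=\dim_\C(M)+\mathrm{rank}_\C(\mathcal{A})=2\dim_\C(M)$. For the kernel condition I will fix $x\in M$ and $U\in\ker(\Omega)_x\cap\ker(Ts)_x\cap\ker(Tt)_x$; identifying $U$ with $u\in\mathcal{A}_x$ via $\ker(Ts)|_M=\mathcal{A}$, the hypothesis $U\in\ker(Tt)$ yields $\widehat{\rho}(u)=0$, while $U\in\ker(\Omega)$ combined with $i_{\overrightarrow{u}}\Omega=t^*\widehat{\mu}(u)$ gives $\widehat{\mu}(u)=0$, whence $u=0$ by the Dirac nondegeneracy of $\widehat{\mu}$. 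The forward-Dirac property of $t$ (and hence the assertion that $(\G,\Omega)$ integrates $\mathcal{L}$) and the uniqueness of $\Omega$ are then direct instances of Theorem~\ref{thm:diffpresgrp}~(1) and the bijection in Theorem~\ref{thm:hol_dif}, respectively.

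The only piece of genuine work is the kernel-condition check, and even that reduces in one line via $i_{\overrightarrow{u}}\Omega=t^*\widehat{\mu}(u)$ to the nondegeneracy of $\widehat{\mu}$ already built into the notion of holomorphic Dirac structure. An alternative route would bypass Theorem~\ref{thm:hol_dif} entirely by passing to real parts: the Dirac-Nijenhuis structure $(L,r)$ associated to $\mathcal{L}$ by Proposition~\ref{prop:hol_dirac} integrates, by Theorem~\ref{thm:main_integration} together with Remark~\ref{rem:holinteg}, to a presymplectic-Nijenhuis groupoid $(\G,\omega,J)$ whose multiplicative $(1,1)$-tensor $J$ is precisely the complex structure of $\G$, and then $\Omega=\omega-\ii\,\omega_J$ recovers the same holomorphic 2-form in view of Remark~\ref{rem:realparts}.
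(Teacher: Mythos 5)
Your proposal follows essentially the same route as the paper: take $\widehat{\mu}=\pr_{T^*}|_{\mathcal{L}}$ as a closed holomorphic IM $2$-form, integrate it via Theorem~\ref{thm:hol_dif}, and verify the dimension and kernel conditions of Definition~\ref{def:holgrp} together with the forward-Dirac property from $i_{\overrightarrow{u}}\Omega=t^*\widehat{\mu}(u)$; your closing remark about passing to real parts is exactly the alternative the paper sketches after the theorem. The only point treated more carefully in the paper is uniqueness, which is established only up to a groupoid automorphism $F$ with $F^*\Omega=\Omega'$ (since a second integration a priori yields the IM form $\widehat{\mu}\circ\phi$ for a Lie-algebroid automorphism $\phi$, cf.\ Theorem~\ref{thm:diffpresgrp}(2)), rather than directly from the bijection of Theorem~\ref{thm:hol_dif} as you assert.
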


\begin{proof}
Set $\widehat{\mu}:=\pr_{T^*}: \mathcal{L} \to (T^{1,0}M)^*$, the projection of $\mathcal{L}$ on the cotangent component.
The fact that $\mathcal{L}$ is a holomorphic Dirac structure implies that $\hat{\mu}$ is a holomorphic IM $2$-form on $\mathcal{L}$. By Theorem \ref{thm:hol_dif}, there exists a unique multiplicative holomorphic $2$-form $\Omega$ on $\G$ integrating $(\widehat{\mu}, \widehat{\nu}=0)$. It is clear that $\partial \Omega =0$ and $\dim(\G) = 2 \dim(M)$, and $t: (\G,\Omega)\to (M,\mathcal{L})$ is a forward Dirac map (as a consequence of \eqref{eq:Omegaright} and \eqref{eq:rhoT}). Since $\widehat{\rho}=\mathrm{pr}_T$, we have that $\ker(\widehat{\rho})\cap \ker(\widehat{\mu}) =\{0\}$, which is equivalent to  $\ker(\Omega)\cap \ker(Tt)\cap \ker(Ts)|_M=\{0\}$. 

If $\Omega$ and $\Omega'$ are two presymplectic integrations of $\mathcal{L}$, then by Theorem~\ref{thm:diffpresgrp}, part (2), there is a (holomorphic) Lie-algebroid automorphism $\phi$ such that $\widehat{\mu}'=\widehat{\mu}\circ \phi$. Since $\G$ is source 1-connected, there is an automorphism of $F:\G\to \G$ such that $F^*\Omega=\Omega'$.

\end{proof}

%include \cite{Gual14} (more?)

Theorems~\ref{thm:diffpresgrp} and \ref{thm:integholDirac} extend the  correspondence between (integrable) holomorphic Poisson structures and source 1-connected holomorphic symplectic groupoids shown in \cite{LMX2}.

Another approach to the previous two theorems relies on the underlying real structures. A holomorphic Dirac structure $\mathcal{L}$ on a complex manifold $(M,r)$ is integrable as a holomorphic Lie algebroid if and only if its real part $L$ is integrable as a real Lie algebroid (see Remark~\ref{rem:holinteg}). From the bijective correspondence between holomorphic Dirac structures $\mathcal{L}$ and Dirac-Nijenhuis structures $(L,r)$ in Proposition~\ref{prop:hol_dirac}, one obtains an equivalence between presymplectic-Nijenhuis structures  and closed holomorphic 2-forms on $M$ via $\omega \mapsto \omega - \ii \omega_r$, see Example~\ref{ex:presymphol}.
 As in Remark~\ref{rem:realparts}, one can check that if $(\G,\Omega)$ is a holomorphic presymplectic groupoid integrating $\mathcal{L}$, then $(\G, \omega, J)$ is a presymplectic-Nijenhuis groupoid integrating $(L, r)$, so passing to real parts is compatible with differentiation and integration:

%%%%%%%%%%%%%%%   Real part diagram %%%%%
\begin{figure}[h!]
\label{fig2}
\centering

\tikzset{every picture/.style={line width=0.75pt}} %set default line width to 0.75pt        

\begin{tikzpicture}[x=0.75pt,y=0.75pt,yscale=-1,xscale=1]
%uncomment if require: \path (0,300); %set diagram left start at 0, and has height of 300

%Shape: Rectangle [id:dp37470916647949837] 
\draw   (122.2,105.81) -- (279.21,105.81) -- (279.21,68.4) -- (122.2,68.4) -- cycle ;
%Straight Lines [id:da12069912398710447] 
\draw    (291.2,78.4) -- (350.21,78.8) ;
\draw [shift={(352.21,78.81)}, rotate = 180.39] [color={rgb, 255:red, 0; green, 0; blue, 0 }  ][line width=0.75]    (10.93,-3.29) .. controls (6.95,-1.4) and (3.31,-0.3) .. (0,0) .. controls (3.31,0.3) and (6.95,1.4) .. (10.93,3.29)   ;
%Straight Lines [id:da5364534595863744] 
\draw    (352.2,98.4) -- (291.21,97.83) ;
\draw [shift={(289.21,97.81)}, rotate = 360.53999999999996] [color={rgb, 255:red, 0; green, 0; blue, 0 }  ][line width=0.75]    (10.93,-3.29) .. controls (6.95,-1.4) and (3.31,-0.3) .. (0,0) .. controls (3.31,0.3) and (6.95,1.4) .. (10.93,3.29)   ;
%Shape: Rectangle [id:dp26711902945409727] 
\draw   (370.2,103.81) -- (527.21,103.81) -- (527.21,66.4) -- (370.2,66.4) -- cycle ;
%Shape: Rectangle [id:dp6306674984659848] 
\draw   (122.2,209.81) -- (279.21,209.81) -- (279.21,172.4) -- (122.2,172.4) -- cycle ;
%Straight Lines [id:da4420976137038859] 
\draw    (291.2,182.4) -- (350.21,182.8) ;
\draw [shift={(352.21,182.81)}, rotate = 180.39] [color={rgb, 255:red, 0; green, 0; blue, 0 }  ][line width=0.75]    (10.93,-3.29) .. controls (6.95,-1.4) and (3.31,-0.3) .. (0,0) .. controls (3.31,0.3) and (6.95,1.4) .. (10.93,3.29)   ;
%Straight Lines [id:da9747625784152771] 
\draw    (352.2,202.4) -- (291.21,201.83) ;
\draw [shift={(289.21,201.81)}, rotate = 360.53999999999996] [color={rgb, 255:red, 0; green, 0; blue, 0 }  ][line width=0.75]    (10.93,-3.29) .. controls (6.95,-1.4) and (3.31,-0.3) .. (0,0) .. controls (3.31,0.3) and (6.95,1.4) .. (10.93,3.29)   ;
%Shape: Rectangle [id:dp8246190975137155] 
\draw   (370.2,207.81) -- (527.21,207.81) -- (527.21,170.4) -- (370.2,170.4) -- cycle ;
%Straight Lines [id:da5154314508422877] 
\draw    (176.2,161.4) -- (176.2,120.81) ;
\draw   (169.71,130.4) -- (176.2,119.81) -- (182.7,130.4)(169.71,126.4) -- (176.2,115.81) -- (182.7,126.4) ;

%Straight Lines [id:da5441828650642988] 
\draw    (217.21,116.81) -- (217.21,157.4) ;
\draw   (223.7,147.81) -- (217.2,158.4) -- (210.71,147.81)(223.7,151.81) -- (217.2,162.4) -- (210.71,151.81) ;

%Straight Lines [id:da8965584270602094] 
\draw    (429,161.4) -- (429,120.81) ;
\draw   (422.51,130.4) -- (429,119.81) -- (435.5,130.4)(422.51,126.4) -- (429,115.81) -- (435.5,126.4) ;

%Straight Lines [id:da8442198530657716] 
\draw    (470.01,116.81) -- (470.01,157.4) ;
\draw   (476.5,147.81) -- (470,158.4) -- (463.51,147.81)(476.5,151.81) -- (470,162.4) -- (463.51,151.81) ;

% Text Node
\draw (141.2,73.4) node [anchor=north west][inner sep=0.75pt]  [font=\scriptsize] [align=left] {Presymplectic-Nijenhuis\\groupois ($\displaystyle \mathcal{G} ,\ \omega ,\ J)$};
% Text Node
\draw (378.6,72.6) node [anchor=north west][inner sep=0.75pt]  [font=\scriptsize] [align=left] {Holomorphic presymplectic\\groupoids ($\displaystyle \mathcal{G} ,\ \Omega =\omega -i\ \omega _{J})$};
% Text Node
\draw (315.2,59.4) node [anchor=north west][inner sep=0.75pt]  [font=\scriptsize] [align=left] {$\displaystyle \Phi $};
% Text Node
\draw (302.2,106.4) node [anchor=north west][inner sep=0.75pt]  [font=\scriptsize] [align=left] {Real part\\};
% Text Node
\draw (315.2,163.4) node [anchor=north west][inner sep=0.75pt]  [font=\scriptsize] [align=left] {$\displaystyle \Phi $};
% Text Node
\draw (302.2,210.4) node [anchor=north west][inner sep=0.75pt]  [font=\scriptsize] [align=left] {Real part\\};
% Text Node
\draw (149.2,178.4) node [anchor=north west][inner sep=0.75pt]  [font=\scriptsize] [align=left] {Dirac-Nijenhuis\\manifolds $\displaystyle ( M,L,r)$};
% Text Node
\draw (403.2,177.4) node [anchor=north west][inner sep=0.75pt]  [font=\scriptsize] [align=left] {Holomorphic Dirac\\manifolds $\displaystyle ( M,\ \mathcal{L}) \ $};
% Text Node
\draw (114,141.8) node [anchor=north west][inner sep=0.75pt]  [font=\tiny] [align=left] {Integration};
% Text Node
\draw (226.6,141.8) node [anchor=north west][inner sep=0.75pt]  [font=\tiny] [align=left] {Differentiation};
% Text Node
\draw (370,141.8) node [anchor=north west][inner sep=0.75pt]  [font=\tiny] [align=left] {Integration};
% Text Node
\draw (479.4,141.8) node [anchor=north west][inner sep=0.75pt]  [font=\tiny] [align=left] {Differentiation};
\end{tikzpicture}
\end{figure}
%%%%%%%%%%%%%%%%%%%%%%%%%%%%%%%%%%%%%%%

So one can derive Theorems~\ref{thm:diffpresgrp} and \ref{thm:integholDirac} from Theorem~\ref{thm:main_integration}.
This viewpoint generalizes \cite[Thm.~3.22]{LMX2} in the case of holomorphic Poisson structures and holomorphic symplectic groupoids, through different methods.

An interesting class of holomorphic Dirac structures is given by the  {\em affine Dirac structures on complex Poisson Lie groups} studied in \cite{BIL}, where an explicit description of their holomorphic presymplectic groupoids is presented together with an application to the construction of holomorphic symplectic groupoids integrating holomorphic Poisson homogeneous spaces.

\appendix

%%%%%%%%%%%%%%%%%%%%%%%%%%%%%%%%%
\section{Proof of Theorem \ref{thm:bijection}} \label{subsec:proof}
We shall assume some familiarity with double vector bundles (as e.g. in \cite{Mac-book}).

Consider the identification $TM\oplus E \to TE|_{TM}\subseteq TE$ as vector bundles over $TM$.
Any element $U$ of $TE$ can be written (not uniquely) as 
$$
U = Tu(X) +_{\scriptscriptstyle{T}} h(X,e),
$$
for some $u \in \Gamma(E), \, e \in E$, where $+_{\scriptscriptstyle{T}}$ stands for the addition on the fibers of $TE \to TM$. The following lemma gives an explicit formula relating a linear $(1,1)$-tensor with its corresponding 1-derivation.
\begin{lem}
If $K: TE \to TE$ is a linear $(1,1)$-tensor with associated 1-derivation $(D,l,r)$, then 
\begin{align}
\label{eq:K_D_explicit} K(Tu(X)) & = Tu(r(X)) +_{\scriptscriptstyle{T}} h(r(X),D_X(u))\\
\label{eq:K_h_explicit} K(h(X,v)) & = h(r(X), l(v)),
\end{align}
where $u, v \in \Gamma(E)$, $X \in \mathfrak{X}(M)$.
\end{lem}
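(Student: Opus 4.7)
The plan is to verify both formulas by a direct computation using the three defining relations \eqref{eq:1-1relations}, combined with the double vector bundle structure of $TE$. Since any $U\in TE$ over $X\in TM$ admits a (non-unique) decomposition $U=Tu(X) +_{\scriptscriptstyle{T}} h(X,e)$, and since $K$ is linear over $r$ in $TE\to TM$, we have $K(U)=K(Tu(X)) +_{\scriptscriptstyle{T}} K(h(X,e))$. It thus suffices to prove \eqref{eq:K_D_explicit} and \eqref{eq:K_h_explicit} separately.

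To prove \eqref{eq:K_h_explicit}, I would identify $h(X,v)$ as a core-type element of the double vector bundle $TE \toto TM$, whose core bundle is canonically identified with $E$ via the vertical lift. Concretely, $h(X,v(x))$ can be written as the sum, in $TE\to E$ at $0_x$, of $v^\uparrow(0_x)$ and $T0(X)$, where $0:M\to E$ is the zero section. Since $K$ is over $r$ and linear in $TE\to TM$, it sends $T0(X)$ to $T0(r(X))$; combining with $K(v^\uparrow)=l(v)^\uparrow$ and using that the two linear structures on $TE$ are compatible, one recombines to obtain $K(h(X,v))=h(r(X),l(v))$.

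To prove \eqref{eq:K_D_explicit}, first use the pairing relation $\langle K, q^*\beta\rangle=q^*\langle\beta,r\rangle$ to conclude $Tq(K(Tu(X)))=r(X)$, so that $K(Tu(X))$ lies over $r(X)$ in $TE\to TM$. Moreover $K$ preserves basepoints in $E$, which are $u(x)$ on the left; hence one may write $K(Tu(X))=Tu(r(X)) +_{\scriptscriptstyle{T}} h(r(X),\psi(X,u))$ for some $\psi(X,u)\in E_x$. The task reduces to identifying $\psi(X,u)=D_X(u)$. For this I would exploit the Lie derivative relation $\mathcal{L}_{u^\uparrow}K=\mathcal{V}(D(u))$: the right-hand side evaluated at $T0(X)$ equals $h(0,D_X(u))$ by the definition of $\mathcal{V}$ and the core identification, while the left-hand side is computed via the flow $\phi_t(e)=e+tu(q(e))$ of $u^\uparrow$, which sends the zero section to $tu$ and hence sends $T0(X)$ to $T(tu)(X)$. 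Differentiating the conjugation $T\phi_{-t}\circ K\circ T\phi_t$ at $t=0$ and using \eqref{eq:K_h_explicit} to dispose of the horizontal contributions then isolates precisely the core component $\psi(X,u)=D_X(u)$.

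The main obstacle is this last step: carefully unpacking $\mathcal{L}_{u^\uparrow}K$ acting on $T0(X)$ within the double vector bundle structure of $TE$, and matching the resulting core element to $D_X(u)$. This requires consistent bookkeeping of the two linear structures on $TE$ and of the canonical identification between the core of $TE\toto TM$ and $E$. An alternative route that bypasses the Lie derivative computation is to define $K'$ via the two formulas, verify that it is a well-defined smooth linear $(1,1)$-tensor whose associated $1$-derivation is $(D,l,r)$, and then invoke the uniqueness part of Theorem~\ref{thm:bijection}(i) to conclude $K'=K$.
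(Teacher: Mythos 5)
Your proposal is correct, and for the substantive half \eqref{eq:K_D_explicit} it takes a genuinely different route from the paper. The paper also begins by writing $K(Tu(X)) = Tu(r(X)) +_{\scriptscriptstyle{T}} h(r(X),c)$, but then identifies $c$ with $D_X(u)$ by pairing against $\mathcal{R}(d\ell_u(\xi))$ in the cotangent prolongation $T^*E\to E^*$: it imports the formula $\<\xi, D_X(u)\> = \<K(Tu(X)),\mathcal{R}(d\ell_u(\xi))\>_E$ from \cite[Lemma~4.9(a)]{HT} together with Mackenzie's identity for the reversal isomorphism, and splits the pairing to kill the $Tu(r(X))$ term. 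You instead work directly with the defining relation $\Lie_{u^\uparrow}K=\mathcal{V}(D(u))$ from \eqref{eq:1-1relations}, computing the Lie derivative via the flow $\phi_t(e)=e+\tau u(q(e))$ of the vertical lift; this has the advantage of being self-contained (it uses only the relations stated in Theorem~\ref{thm:bijection}(i), with no appeal to the dual-pairing machinery of \cite{HT}), at the cost of some double-vector-bundle bookkeeping that the cotangent pairing sidesteps. One point you should make explicit when carrying out your computation: writing $K(Tv(X))=Tv(r(X)) +_{\scriptscriptstyle{T}} h(r(X),\psi(X,v))$, the differentiation of $T\phi_{-t}\circ K\circ T\phi_t$ at $t=0$ produces $\tfrac{d}{dt}\big|_{t=0}\psi(X,tu)$, not $\psi(X,u)$ itself; to conclude you need $\psi(X,tu)=t\,\psi(X,u)$, which follows from $T(tu)(X)=t\cdot Tu(X)$ in the fibers of $TE\to TM$ and the linearity of $K$ over $r$. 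Your fallback argument (define $K'$ by the two formulas and invoke the uniqueness in Theorem~\ref{thm:bijection}(i)) is also viable, but the well-definedness of $K'$ under change of the section $u$ representing a given $U\in TE$ is essentially the same Leibniz-rule computation, so it does not save work. Your treatment of \eqref{eq:K_h_explicit} matches the paper, which simply notes it follows from \eqref{eq:1-1relations}.
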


\begin{proof}
The proof of \eqref{eq:K_h_explicit} follows directly from \eqref{eq:1-1relations}. The proof of \eqref{eq:K_D_explicit} is more involved and we will need to recall some facts from \cite{HT}. Consider the reversal isomorphism $\mathcal{R}: T^*(E^*) \to T^*E$, defined as
$$
\mathcal{R}(\varphi, \beta, e) = (e, -\beta, \varphi),
$$
under the decompositions $T^*(E^*) \cong E^* \oplus T^*M \oplus E$ and $T^*E \cong E \oplus T^*M \oplus E^*$ coming from local trivializations (see \cite[\S~9.5]{Mac-book} for further details). From \cite[Lemma~4.9(a)]{HT}, one has that
\begin{align*}
\<\xi, D_X(u)\> & = \<K(Tu(X)),\mathcal{R}(d\ell_u(\xi))\>_E
\end{align*}
where $u \in \Gamma(E)$, $\xi \in \Gamma(E^*)$, $X \in \frakx(M)$, $\ell_u \in C^{\infty}(E^*)$ is the fiberwise linear function associated with $u$, and $\<\cdot, \cdot\>_E$ is the natural pairing between $TE$ and $T^*E$. By writing $K = Tu(r(X)) +_{\scriptscriptstyle{T}} h(r(X), c)$, for some $c \in \Gamma(E)$, and using basic properties of the cotangent prolongation bundle $T^*E \to E^*$ (see \cite[\S~9.4]{Mac-book}, one has that
\begin{align*}
    \<\xi, D_X(u)\> & = \<Tu(r(X)) +_{\scriptscriptstyle{T}} h(r(X), c),\mathcal{R}(d\ell_u(\xi)) +_{\scriptscriptstyle{E^*}} 0_{\xi}\>_E\\
     & = \<Tu(r(X)), \mathcal{R}(d\ell_u(\xi))\>_E + \<h(r(X), c), 0_\xi\>_E,
\end{align*}
where $+_{\scriptscriptstyle{E^*}}$ is the fiberwise sum and $0_\xi$ is the zero of the cotangent prolongation $T^*E \to E^*$. As $\<h(r(X), c), 0_\xi\>_E = \<\xi, c\>$, it only remains to prove that $\<Tu(r(X)), \mathcal{R}(d\ell_u(\xi))\>_E=0$. This will follow from  (see \cite[Prop.~9.5.3]{Mac-book})
$$
\mathcal{R}(d\ell_u(\xi(x))) = d\ell_\xi(u(x)) - q^*(d\<\xi, u\>)(u(x)), \,\,\, x \in M.
$$
Indeed, this identity implies that
\begin{align*}
\<Tu(r(X)), \mathcal{R}(d\ell_u(\xi))\>_E & =\Lie_X\<\xi, u\> - \Lie_X\<\xi, u\> = 0.  
\end{align*}
\end{proof}

To complete the proof of Theorem \ref{thm:bijection}, notice that $\Phi$ being a vector bundle map implies that
$$
T\Phi(h(X,e_1)) = h(X,\Phi(e_1)),
$$
for $e_1 \in E_1$. It follows from \eqref{eq:K_h_explicit} that 
$$
T\Phi \circ K_1(h(X,e_1)) = K_2(T\Phi(h(X,e_1))) \Longleftrightarrow r_1 = r_2 =r,\,\,\,\ \Phi \circ l_1 = l_2 \circ \Phi.
$$
Similarly, from \eqref{eq:K_D_explicit},
\begin{align*}
T\Phi(K_1(Tu(X))) & = T(\Phi(u))(r(X)) +_{\scriptscriptstyle{T}} h(r(X), \Phi(D^1_X(u))),\\
K_2(T\Phi(Tu(X))) & = T(\Phi(u))(r(X))  +_{\scriptscriptstyle{T}} h(r(X), D^2_X(\Phi(u))).
\end{align*}
Therefore
$$
T\Phi(K_1(Tu(X))) = K_2(T\Phi(Tu(X))) \Longleftrightarrow \Phi\circ D^1_X = D^2_X\circ \Phi.
$$

%%%%%%%%%%%%%%%%%%%%%%%%%%%%%%%%%%%%%%%
%%%%%%%%%%%%%%%%%%%%%%%%%%%%%%%%%%%%%%%%%%%%%%%%%%%%%%%%%%

\section{Other notions of Dirac-Nijenhuis structures}
\label{app:B}
In this section, we relate our notion of Dirac-Nijenhuis structure with other existing definitions in the literature \cite{CG,  Nunes04, LB}. Our main tool to compare the different definitions is the bracket on $\T M$ defined by
\begin{align}
\label{dfn:D_bracket}\Cour{(X,\alpha), (Y,\beta)}_{\D^r} & = \Cour{(r(X),r^*(\alpha)), (Y,\beta)} + \D^r_{Y}(X,\alpha)\\
 \nonumber & = ([X,Y]_r, \mathcal{L}_{r(X)}\beta - i_{r(Y)}d\alpha),
\end{align}
for $r\in \Omega^1(M, TM)$ (and $[\cdot,\cdot]_r$ defined in \eqref{eq:r_bracket}). The properties of $(\T M, \Cour{\cdot,\cdot}_{\D^r})$ will be discussed in \cite{BDC}. Here, we only need two facts about the bracket \eqref{dfn:D_bracket}: (1) if $L$ is a Dirac structure that is compatible with $r$, then $L$ is involutive with respect to $\Cour{\cdot, \cdot}_{\D^r}$; 2) the following identity holds:
\begin{equation}\label{eq:D_bracket_skew}
   \Cour{(X,\alpha), (Y, \beta)}_{\mathbb{D}^r} + \Cour{(Y, \beta),(X,\alpha)}_{\mathbb{D}^r} =  d \< (r(X),r^*(\alpha)), (Y, \beta)\>.
\end{equation}

\subsection*{Dirac-Nijenhuis structures of contraction type} A vector-bundle morphism $N: \T M \rightarrow \T M$ (over the identity on $M$) induces a bracket on sections of $\T M$ called \textit{the contracted bracket}:
\begin{equation}\label{ctr}
   \Cour{\sigma_1, \sigma_2}_N:= \Cour{N \sigma_1, \sigma_2} + \Cour{\sigma_1, N\sigma_2}- N(\Cour{\sigma_1,\sigma_2}) 
\end{equation}
for all $\sigma_1,\sigma_2 \in \Gamma(\T M)$. One can also define a Nijenhuis-type torsion by
\begin{equation}
T_N(\sigma_1,\sigma_2) = N(\Cour{\sigma_1,\sigma_2}_N) - \Cour{N\sigma_1,N\sigma_2}.    
\end{equation}

To distinguish from our definition of Dirac-Nijenhuis structure, we shall say that a Dirac structure $L \subset \T M$ and $N$ as above form a \textit{Dirac-Nijenhuis structure of contraction type} if $\Cour{\cdot,\cdot}_N$ restricts to skew-symmetric bracket on $\Gamma(L)$ and $T_N|_{\Gamma(L)}  = 0$. Dirac-Nijenhuis structures of contraction type were defined and studied in \cite{CG,Nunes04}.

\begin{prop} \label{propGrab} Let $L$ be a Dirac  structure on $M$, $r\in \Omega^1(M,TM)$, and consider $N=(r,0)$. Then $(L,N)$  is a Dirac-Nijenhuis structure of contraction type if and only if 
\begin{itemize}
    \item[(i)] $(r,r^*)(L) \subseteq L$,
    \item[(ii)]$\D^r_Y(\Gamma(L)) \subseteq \Gamma(L)$, for every $Y \in \pr_T(L)$,
    \item[(iii)] $\N_r(X,Y)=0$, for every $X,Y \in \pr_T(L)$.
\end{itemize}
In particular, if $(L,r)$ is Dirac-Nijenhuis, then $(L, N)$ is Dirac-Nijenhuis of contraction type.
\end{prop}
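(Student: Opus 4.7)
The plan is to reduce the problem to the bracket $\Cour{\cdot,\cdot}_{\D^r}$ from \eqref{dfn:D_bracket}. A direct computation using $N\sigma=(r(X),0)$ for $\sigma=(X,\alpha)$ shows that the contracted bracket \eqref{ctr} with $N=(r,0)$ coincides with $\Cour{\cdot,\cdot}_{\D^r}$; indeed, for $\sigma_1=(X,\alpha)$, $\sigma_2=(Y,\beta)$ both produce $([X,Y]_r,\,\Lie_{r(X)}\beta-i_{r(Y)}d\alpha)$. Hence the two conditions defining a Dirac-Nijenhuis structure of contraction type translate into conditions on $\Cour{\cdot,\cdot}_{\D^r}|_{\Gamma(L)}$ and on the vanishing of $T_N$ on $\Gamma(L)$.

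First I would treat the skew-symmetry part of condition~(1). By \eqref{eq:D_bracket_skew}, the defect of skew-symmetry of $\Cour{\cdot,\cdot}_{\D^r}$ on $\Gamma(L)$ is the differential $d\<(r(X),r^*(\alpha)),(Y,\beta)\>$. If $(r,r^*)(L)\subseteq L$, this pairing vanishes because $L$ is lagrangian, so the differential is zero. Conversely, supposing the differential vanishes for all pairs of sections of $L$, one applies the vanishing to $(f\sigma_1,\sigma_2)$ (with $f\sigma_1\in\Gamma(L)$) and subtracts $f$ times the vanishing for $(\sigma_1,\sigma_2)$; this forces the function $\<(r(X),r^*(\alpha)),(Y,\beta)\>$ itself to vanish, whence $(r(X),r^*(\alpha))\in L^\perp=L$. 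This yields the equivalence of skew-symmetry with~(i).

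For the closure of $\Cour{\cdot,\cdot}_{\D^r}$ in $\Gamma(L)$, assuming~(i), the decomposition in \eqref{dfn:D_bracket} shows that $\Cour{(r(X),r^*(\alpha)),(Y,\beta)}\in\Gamma(L)$ by Courant-involutivity of $L$, so closure is equivalent to $\D^r_Y(\Gamma(L))\subseteq\Gamma(L)$ for $Y\in\pr_T(L)$, i.e.\ condition~(ii). Finally, a direct computation gives
\[
T_N(\sigma_1,\sigma_2)=N(\Cour{\sigma_1,\sigma_2}_{\D^r})-\Cour{N\sigma_1,N\sigma_2}=\bigl(r[X,Y]_r-[r(X),r(Y)],\,0\bigr)=\bigl(-\N_r(X,Y),\,0\bigr),
\]
so $T_N|_{\Gamma(L)}=0$ is equivalent to~(iii). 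The final ``in particular'' claim is then immediate: for a Dirac-Nijenhuis structure $(L,r)$, condition~(i) is part of the definition, (ii) follows from the stronger $\D^r_X(\Gamma(L))\subseteq\Gamma(L)$ holding for every $X\in\frakx(M)$, and (iii) holds globally since $\N_r=0$. The only delicate point is the $C^\infty(M)$-linearity argument used in the converse direction of the skew-symmetry equivalence; the rest consists of direct manipulations with the identities already recorded in Appendix~\ref{app:B}.
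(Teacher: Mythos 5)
Your proposal is correct and follows essentially the same route as the paper: identifying $\Cour{\cdot,\cdot}_N$ with $\Cour{\cdot,\cdot}_{\D^r}$, using \eqref{eq:D_bracket_skew} plus a $C^\infty(M)$-linearity argument to equate skew-symmetry with $(r,r^*)(L)\subseteq L$ (the paper phrases this via a lagrangian complement and a local basis, but it is the same trick), then reading off involutivity from the decomposition \eqref{dfn:D_bracket} and computing $T_N(\sigma_1,\sigma_2)=(\pm\N_r(X,Y),0)$. The only discrepancy is the sign in $T_N$ (your $-\N_r$ versus the paper's $\N_r$); yours is the correct one, and it is immaterial since only the vanishing is used.
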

\begin{proof}
It is a direct verification that, for $N=(r,0)$,
$
    \Cour{\cdot, \cdot}_N = \Cour{\cdot, \cdot}_{\mathbb{D}^r}.
$
Hence, it follows from \eqref{eq:D_bracket_skew} that $\Cour{\cdot,\cdot}_N$ is skew-symmetric on $L$ if and only if 
$$
d\<(r,r^*)(\sigma_1), \sigma_2\> = 0, \,\,\; \forall \, \sigma_1,  \sigma_2 \in \Gamma(L).
$$
We claim that this is equivalent to $(r,r^*)(L) \subset L$. Indeed, write $\T M = L \oplus L'$, where $L'$ is a lagrangian complement of $L$, and take a local basis $\{\sigma_1, \dots, \sigma_m, \sigma^1, \dots, \sigma^m\}$, where 
$$
\sigma_i \in \Gamma(L), \,\; \sigma^j \in \Gamma(L'), \; \<\sigma_i, \sigma^j\> = \delta_i^j, \,\,\;\;\; i, j =1, \dots, m.
$$
By writing $(r,r^*)(\sigma_k) = a_k^i \sigma_i + b_{kj}\sigma^j$, one sees that $b_{kj}$ must be zero since
$
d\<\sigma_k, f\sigma_j\> =0, \,\, \forall \, f \in C^\infty_{\mathrm{loc}}(M).  
$
Now, once $(r,r^*)(L)\subseteq L$, it follows from \eqref{dfn:D_bracket} that 
$$
\<\Cour{(X,\alpha), (Y,\beta)}_{\D^r}, (Z, \gamma)\> = \<\D^r_Y(X,\alpha), (Z,\gamma)\>, \,\,\;\; \forall\, (X,\alpha), \, (Y, \beta), \, (Z, \gamma) \in \Gamma(L).
$$
Since $\Cour{\cdot,\cdot}_N=\Cour{\cdot, \cdot}_{\D^r}$ and $L$ is lagrangian, it follows that $L$ is involutive  if and only if $\D^r_Y(\Gamma(L)) \subseteq \Gamma(L)$, for every $Y \in \pr_T(L)$. At last, condition (iii) is a consequence of 
$$
T_N((X,\alpha), (Y,\beta)) = (\N_r(X,Y), 0).
$$
\end{proof}

In the particular case of Poisson structures, Dirac structures of contraction type lead to objects which are strictly more general than  Poisson-Nijenhuis structures, in contrast with what is described in Example~\ref{exam:pn_dn}.
%\begin{rmk}\em
Indeed, when $L= \mathrm{graph}(\pi)$ for a Poisson structure $\pi$,  Proposition \ref{propGrab} says that $(L,r)$ is Dirac-Nijenhuis of contraction type if and only if $\pi^\sharp \circ r^* = r \circ \pi^\sharp$,
$$
\pi^\sharp(C_\pi^r(\alpha, \beta)) = 0,
$$
and $\N_r(X,Y)=0$ for $X, Y \in \pi^\sharp(TM)$, where $C_\pi^r$ is the concomitant \eqref{dfn:schw_concomitant}.

\subsection*{Dirac-Nijenhuis structures of double type} A Nijenhuis operator $r: TM \to TM$ makes the pair $(TM, T^*M)$ into a Lie bialgebroid, where $T^*M$ has the trivial Lie algebroid structure and $TM$ has $[\cdot,\cdot]_r$ as Lie bracket and anchor $r$. The corresponding double is the Courant algebroid structure $\T M_r = TM \oplus T^*M$ with the natural pairing, anchor given by $r \circ \pr_T$, and bracket given by
\begin{equation}
    \Cour{(X,\alpha), (Y, \beta)}_{dbl} :=([X,Y]_r, \mathcal{L}_X^r\beta-i_Y d^r \alpha),
\end{equation}
where $\mathcal{L}^r$ and $d^r$ denote the Lie derivative and differential associated with the Lie algebroid $(TM,[\cdot,\cdot]_r, r)$,  respectively.

We say that a pair $(L, r)$  is a \textit{Dirac-Nijenhuis structure of double type} if $r$ is a Nijenhuis operator, and $L$ and $L_{(1,0)}=(r, \id_{T^*M})(L)$ are Dirac structures in $\mathbb{T}M$ and $\mathbb{T}M_r$, simultaneously. This notion of Dirac-Nijenhuis structure was studied in \cite{LB} in greater generality.

\begin{prop}\label{propLiu} If $(L,r)$ is a Dirac-Nijenhuis structure and $(r,\id_{T^*M})|_L$ is injective, then $(L, r)$ is a Dirac-Nijenhuis structure of double type.
\end{prop}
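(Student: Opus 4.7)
The first step will be to invoke Proposition~\ref{prop:DNhier}(1), whose hypothesis is precisely the injectivity of $(r,\id_{T^*M})|_L$. This immediately supplies that $L_{(1,0)}=(r,\id_{T^*M})(L)$ is itself a Dirac-Nijenhuis structure on $M$ with respect to the standard Courant-Dorfman bracket on $\T M$. Since the symmetric pairings on $\T M$ and $\T M_r$ coincide, it follows that $L_{(1,0)}$ is automatically lagrangian as a subbundle of $\T M_r$. All that remains is to verify that $L_{(1,0)}$ is involutive with respect to $\Cour{\cdot,\cdot}_{dbl}$.

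The central step is the algebraic identity, valid for arbitrary sections $\sigma_1=(X_1,\alpha_1)$ and $\sigma_2=(X_2,\alpha_2)$ of $\T M$:
\[
\Cour{\sigma_1,\sigma_2}_{dbl}=\Cour{\sigma_1,\sigma_2}_{\D^r}-(0,\,C^r(\sigma_1,\sigma_2)),
\]
where $C^r(\sigma_1,\sigma_2)\in\Omega^1(M)$ is the concomitant $W\mapsto \langle \D^r_W(\sigma_1),\sigma_2\rangle$ of Remark~\ref{rem:concom}. Since both brackets share the common $TM$-component $[X_1,X_2]_r$, only the $T^*M$-parts need be compared. I would derive the identity by a direct Cartan-calculus computation that reduces the difference of the $T^*M$-components to $-\alpha_2(D^r_W(X_1))-D^{r,*}_W(\alpha_1)(X_2)$, using essentially the definitional rearrangement $[X,rW]=D^r_W(X)+r[X,W]$ together with the Lie-algebroid formulas $\Lie^r_{X}\beta(W)=r(X)\beta(W)-\beta([X,W]_r)$ and $d^r\alpha(Y,W)=r(Y)\alpha(W)-r(W)\alpha(Y)-\alpha([Y,W]_r)$. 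This verification is the main obstacle. Notably, it does not rely on $\N_r=0$; it is a formal manipulation valid for any $(1,1)$-tensor $r$, the Nijenhuis condition being used only to ensure that $\T M_r$ is a genuine Courant algebroid and that $L_{(1,0)}$ is a Dirac structure on $\T M$.

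With the identity in place the conclusion will be immediate. Evaluating both sides on $\sigma_1,\sigma_2\in\Gamma(L_{(1,0)})$, the compatibility of $L_{(1,0)}$ with $r$ forces the concomitant $C^r_{L_{(1,0)}}$ to vanish (Remark~\ref{rem:concom}), and fact~(1) recalled at the start of this appendix gives $\Cour{\Gamma(L_{(1,0)}),\Gamma(L_{(1,0)})}_{\D^r}\subseteq \Gamma(L_{(1,0)})$. Hence $\Cour{\sigma_1,\sigma_2}_{dbl}=\Cour{\sigma_1,\sigma_2}_{\D^r}\in\Gamma(L_{(1,0)})$, so that $L_{(1,0)}$ is a Dirac structure in $\T M_r$. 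Combined with the fact that $L$ is a Dirac structure in $\T M$ by hypothesis, this means that $(L,r)$ is a Dirac-Nijenhuis structure of double type.
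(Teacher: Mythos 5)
Your argument follows essentially the same route as the paper's proof: the same comparison identity
$\Cour{\sigma_1,\sigma_2}_{dbl}=\Cour{\sigma_1,\sigma_2}_{\D^r}-\<\D^r_{(\cdot)}(\sigma_1),\sigma_2\>$
(which the paper derives from the formulas $i_Yd^r\alpha = i_{r(Y)}d\alpha + \<D^{r,*}_{(\cdot)}(\alpha),Y\>$ and $\Lie^r_X\beta = \Lie_{r(X)}\beta - \<\beta, D^r_{(\cdot)}(X)\>$, exactly the Cartan-calculus reduction you describe), the same appeal to Proposition~\ref{prop:DNhier}(1) for $L_{(1,0)}$, and the same use of the vanishing concomitant together with involutivity under $\Cour{\cdot,\cdot}_{\D^r}$. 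Your observation that the identity is purely formal and independent of $\N_r=0$ is also correct.

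There is, however, one omission. The definition of a Dirac-Nijenhuis structure of double type asks that \emph{both} $L$ and $L_{(1,0)}$ be Dirac structures in $\T M$ \emph{and} in $\T M_r$. You verify that $L$ is Dirac in $\T M$ (hypothesis), that $L_{(1,0)}$ is Dirac in $\T M$ (Proposition~\ref{prop:DNhier}), and that $L_{(1,0)}$ is involutive for $\Cour{\cdot,\cdot}_{dbl}$; but you never check that $L$ itself is involutive for $\Cour{\cdot,\cdot}_{dbl}$, which is in fact the first thing the paper's proof establishes. The gap is harmless because it is closed by the very argument you already give, applied to $L$ in place of $L_{(1,0)}$: since $(L,r)$ is compatible, $C^r_L=0$ by Remark~\ref{rem:concom} and $L$ is involutive for $\Cour{\cdot,\cdot}_{\D^r}$ by fact (1), so your identity yields $\Cour{\Gamma(L),\Gamma(L)}_{dbl}=\Cour{\Gamma(L),\Gamma(L)}_{\D^r}\subseteq\Gamma(L)$. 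You should add this sentence to make the verification of the definition complete.
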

\begin{proof}
One can check that
$$
i_Yd^r\alpha = i_{r(Y)}d\alpha + \<D^{r,*}_{(\cdot)}(\alpha), Y\>, \;\;\;\,\,\, \Lie_X^r \beta = \Lie_{r(X)}\alpha - \<\alpha, D^r_{(\cdot)}(X)\>.
$$
It follows that
$$
    \Cour{(X,\alpha), (Y,\beta)}_{dbl}= \Cour{(X,\alpha),(Y,\beta)}_{\mathbb{D}^r} - \< \mathbb{D}^r_{(\cdot)}(X,\alpha), (Y, \beta)\>,
$$
where  $\Cour{\cdot, \cdot}_{\mathbb{D}^r}$ is defined by \eqref{dfn:D_bracket}.

Therefore, if $(L,r)$ is a Dirac-Nijenhuis structure, since $\<\D_{(\cdot)}^r(\sigma_1), \sigma_2\>=0$, for all $\sigma_1, \sigma_2 \in \Gamma(L)$, one has that
$
\Cour{\cdot, \cdot}_{dbl}|_{\Gamma(L)}=  \Cour{\cdot,\cdot}_{\mathbb{D}}|_{\Gamma(L)}.
$
This implies that $L$ is involutive under $\Cour{\cdot,\cdot}_{dbl}$. 
Also, if $(r,\id_{T^*M})|_L$ is injective, it follows from Prop. \ref{prop:DNhier} that $(L_{(1,0)},r)$ is a Dirac-Nijenhuis structure on $\mathbb{T}M$ and we can apply the same argument above to obtain that $L_{(1,0)}$ is a Dirac structure on $\T M_r$. This concludes the proof.
\end{proof}

We remark that, just as for Dirac structures of contraction type, Dirac structures of double type are also strictly more general than Poisson-Nijenhuis structures when  $L=\mathrm{graph}(\pi)$, for a Poisson structure $\pi$. Indeed, in this case it is proved in \cite{LB} that $(L, r)$ is Dirac-Nijenhuis of double type if and only if $r$ is Nijenhuis, $\pi_r^\sharp = r \circ \pi^\sharp$, and 
$$
[\pi, \pi_r]=0.
$$
In general, the condition  $[\pi,\pi_r]=0$ is weaker than the vanishing of the concomitant $C_\pi^r$ (they are equivalent if $\pi$ is symplectic, see \cite[Thm.~2.1]{Vai96}).

%%%%%%%%%%%%%%%%%%%%%%%%%%%%%%%%%%%%%%%%%%%%%

\end{document}